\newtheorem{proposition}{Proposition}[section]
\newtheorem{lemma}[proposition]{Lemma}
\newtheorem{theorem}[proposition]{Theorem}
\newcommand{\kpp}{\operatorname{kpp}}
\newcommand{\GL}{\operatorname{GL}}
\newcommand{\Out}{\operatorname{Out}}
\newcommand{\PSL}{\operatorname{PSL}}
\begin{document}

\title{More on Landau's theorem and conjugacy classes}

\author[B. \c{C}{\i}narc{\i}]{Burcu \c{C}{\i}narc{\i}}
\address[Burcu \c{C}{\i}narc{\i}]{Department of Mathematics, Texas State University, 601 University Drive, San Marcos, TX, 78666, USA}
\email{bcinarci@txstate.edu}

\author[T. M. Keller]{Thomas Michael Keller}
\address[Thomas Michael Keller]{Department of Mathematics, Texas State University, 601 University Drive, San Marcos, TX, 78666, USA}
\email{keller@txstate.edu}

\author[A. Mar\'oti]{Attila Mar\'oti}
\address[Attila Mar\'oti]{Hun-Ren Alfr\'ed R\'enyi Institute of Mathematics, Hungarian Academy of Sciences, Re\'altanoda utca 13-15, H-1053, Budapest, Hungary}
\email{maroti.attila@renyi.hu}

\author[I. I. Simion]{Iulian I. Simion}
\address[Iulian I. Simion]{Department of Mathematics\\
	`Babe\c s-Bolyai' University\\
	str. Ploie\c sti nr.23-25,
	400157 Cluj-Napoca\\
	Romania}
\email{iulian.simion@ubbcluj.ro}

\keywords{finite group, Landau's theorem, $p$-regular element}
\subjclass[2020]{20E45}
\thanks{The third author was supported by the National Research, Development and Innovation Office (NKFIH) Grant No.~K138596, No.~K132951 and No.~K138828.}
\date{\today}

\begin{abstract}
In this paper we present two new results on the number of certain conjugacy classes of a finite group. For a finite group $G$, let $n(G)$ be the maximum of $k_{p}(G)$ taken over all primes $p$ where $k_{p}(G)$ denotes the number of conjugacy classes of nontrivial $p$-elements in $G$. Using a recent theorem of Giudici, Morgan and Praeger, we prove that there exists a function $f(x)$ with $f(x) \to \infty$ as $x \to \infty$ such that $n(G) \geq f(|G|)$ for any finite group $G$. Let $G$ be a finite group, and let $p$ be a prime dividing $|G|$. Let $k_{p'}(G)$ denote the number of conjugacy classes of elements of $G$ whose orders are coprime to $p$. We show that either $p=11$ and $G=C_{11}^2\rtimes \text{\rm SL}(2,5)$, or there exists a factorization $p-1 = ab$ with $a$ and $b$ positive integers, such that $k_{p}(G) \geq a$ and $k_{p'}(G) \geq b$ with equalities in both cases if and only if $G=C_p \rtimes C_b$ with $C_G(C_p) = C_p$.
\end{abstract}
\maketitle

\section{Introduction}

Let $G$ be a finite group. In the last two decades, there has been a lot of activity in establishing lower bounds for the number $k(G)$ of conjugacy classes of $G$. A classical problem is to bound $k(G)$ from below only in terms of $|G|$. The first result in this direction is due to Landau \cite{Landau} who showed that for any positive integer $k$ there are at most finitely many finite groups $G$ such that $k(G) = k$. Brauer \cite[p. 137]{Brauer} stated that Landau's proof can be used to show that, for $k(G)\geq 3$,
\[ |G|\leq (2k(G))^{2^{k(G)-3}}\ \prod\limits_{i=1}^{k(G)-2} (k(G)-i)^{2^{k(G)-2-i}}    \]
which leads to a bound of type $k(G)\geq c \log\log |G|$ for some constant $0<c<1$. (Here and
throughout the paper, the logarithms are taken to base $2$ unless otherwise stated.) The bound $k(G)\geq \log\log |G|$ was established in \cite[Corollary I]{ET}. Problem 3 of Brauer's list of problems \cite{Brauer} was to give a substantially better lower bound for $k(G)$ than this. This was achieved by Pyber in \cite{pyber1992}. His estimate was improved by the second author of this paper in \cite{Keller1}. The best general bound to date is due to Baumeister, the third author and Tong-Viet \cite{baumeister} and is of the order of magnitude $\log|G|/(\log\log |G|)^{3 + \epsilon}$, for any positive $\epsilon$.
Bertram \cite{Bertram} asks whether $k(G) > \log_{3}|G|$ holds for any finite group $G$.

Generalizing the theorem of Landau, Héthelyi and Külshammer \cite{hethelyi-kulshammer2005} proved that there exists a function $f$ on the set of natural numbers such that $\kpp(G) \geq f(|G|)$ for all finite groups $G$ and $f(x) \to \infty$ as $x \to \infty$. Here $\kpp(G)$ denotes the number of conjugacy classes of $G$ consisting of elements of prime power orders. For a nonabelian finite simple group $T$ and a prime $p$, let $m_{p}(T)$ be the number of orbits of the automorphism group $\mathrm{Aut}(T)$ of $T$ on the set of nontrivial $p$-elements of $T$. Let $m(T)$ be the maximum of $m_{p}(T)$ taken over all prime divisors of $|T|$. Recently, Giudici, Morgan and Praeger \cite{GMP1} proved the following surprising result: There exists an increasing function $f$ on the set of natural numbers such that, for a finite nonabelian simple group $T$, the invariant $m(T)+1$ is at least $f(|T|)$. As they indicated in their paper, it would be interesting to know if this theorem could be used to prove a similar result for larger families of finite groups. 

For a finite group $G$ and a prime $p$, let $k_{p}(G)$ be the number of conjugacy classes of nontrivial $p$-elements in $G$ and let $n(G)$ be the maximum of $k_{p}(G)$ as $p$ ranges over all the prime factors $p$ of $|G|$. 

Our first main result is an improvement of the above-mentioned theorem of Héthelyi and Külshammer \cite{hethelyi-kulshammer2005}. 

\begin{theorem}
\label{main2}
There exists a function $f(x)$ with $f(x) \to \infty$ as $x \to \infty$ such that $n(G) \geq f(|G|)$ for any finite group $G$.
\end{theorem}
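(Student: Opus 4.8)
The plan is to prove the logically equivalent statement that for every $k$ there are only finitely many finite groups $G$ with $n(G)\le k$; equivalently, that $|G|$ is bounded above by a function of $n(G)$. The basic transfer device is this: if $N\trianglelefteq G$, then every $p$-element of $G/N$ is the image of a $p$-element of $G$ (take the $p$-part of any preimage), and $G$-conjugate elements have $G/N$-conjugate images, so the natural map from $G$-classes of nontrivial $p$-elements of $G$ onto $(G/N)$-classes of nontrivial $p$-elements of $G/N$ is surjective; hence $k_p(G/N)\le k_p(G)$ and $n(G/N)\le n(G)$. More generally, for a chief factor $N/M$ and a prime $p$, lifting $p$-elements from $N/M$ to $N$ gives a surjection from $G$-classes of nontrivial $p$-elements of $N$ onto the nontrivial orbits of the image $\overline G$ of $G$ in $\operatorname{Aut}(N/M)$ on $N/M$, so $k_p(G)\ge$ (number of nontrivial $\overline G$-orbits on $N/M$). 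This is how information about sections is pushed back to $G$.

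First I would handle the nonsolvable part. Let $S^{t}$ be a nonabelian chief factor, $S$ simple. Since $\operatorname{Aut}(S^{t})=\operatorname{Aut}(S)\wr S_{t}$, for a prime $p\mid |S|$ the elements $(s,\dots,s,1,\dots,1)$ with $j$ copies of a fixed nontrivial $p$-element $s$ $(1\le j\le t)$ lie in pairwise distinct $\operatorname{Aut}(S^{t})$-orbits (the support size is an invariant), and the elements $(s,1,\dots,1)$ with $s$ running over $\operatorname{Aut}(S)$-orbit representatives of nontrivial $p$-elements of $S$ also lie in distinct orbits; choosing $p$ with $m_{p}(S)=m(S)$, the transfer device gives $k_{p}(G)\ge m_{p}(S^{t})\ge\max(t,m(S))$. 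By the theorem of Giudici, Morgan and Praeger, $m(S)+1\ge f_{0}(|S|)$ for an increasing $f_{0}\to\infty$, so $t\le n(G)$ and $|S|\le f_{0}^{-1}(n(G)+1)$. Next I would bound the \emph{number} of nonabelian chief factors: grouping them by isomorphism type and, for fixed $S$, lining up in a chief series those factors isomorphic to a power of $S$, a ``how far down the support reaches'' argument (an element nontrivial exactly in the $i$-th such layer cannot be $G$-conjugate to one nontrivial in a later layer, since the lower layer is normal) produces, for $p\mid|S|$, that many pairwise non-$G$-conjugate nontrivial $p$-elements; hence there are at most $n(G)$ such layers. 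Together with the bound on $|S|$ and the support bound $t\le n(G)$, and the fact that only finitely many $S$ of the allowed order exist, the product of all nonabelian chief factors of $G$ is bounded in terms of $n(G)$.

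Now pass to the solvable radical $R$ of $G$. Then $G/R$ has trivial solvable radical, so $F^{*}(G/R)=E(G/R)$ is a central product of components whose simple sections are composition factors of $G$; by the previous paragraph (applied with $n(G/R)\le n(G)$) there are boundedly many components of bounded order, so $|E(G/R)|$ is bounded, and since $C_{G/R}(F^{*}(G/R))=Z(F^{*}(G/R))\le E(G/R)$, also $|G/R|$ is bounded in terms of $n(G)$. Finally, $G/R$ acts on the set of $R$-classes of nontrivial $p$-elements of $R$, whence $k_{p}(R)\le |G/R|\cdot k_{p}(G)$ and $n(R)$ is bounded in terms of $n(G)$. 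Thus everything reduces to: for $R$ \emph{solvable} with $n(R)$ bounded, bound $|R|$. For a $p$-chief factor $V=N/M\cong\mathbb{F}_{p}^{d}$ of $R$ the orbit count yields $k_{p}(R)\ge(p^{d}-1)/|R:C_{R}(V)|$, and combining with the theorem of Héthelyi and Külshammer via $f_{HK}(|R|)\le\operatorname{kpp}(R)=1+\sum_{\ell}k_{\ell}(R)\le 1+\omega(|R|)\,n(R)$, it suffices to bound the number $\omega(|R|)$ of prime divisors of $|R|$ (equivalently $|F(R)|$, since the remaining primes are forced by the coprime action of $R/F(R)\hookrightarrow\prod_{p}\operatorname{GL}(\dim_{p}(F(R)/\Phi(F(R))),p)$) in terms of $n(R)$.

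The main obstacle is exactly this solvable endgame. The naive hope — that each abelian chief factor is bounded in terms of $n(G)$ — fails: in $\mathbb{F}_{p}^{d}\rtimes\Gamma\mathrm{L}(1,p^{d})$ the normal subgroup $\mathbb{F}_{p}^{d}$ is arbitrarily large yet contributes only one class of nontrivial $p$-elements, because its automiser is transitive on nonzero vectors. The real point must be that such nearly transitive (coprime) automisers cannot occur for \emph{all} the relevant primes simultaneously without forcing some $k_{\ell}(R)$ to grow; establishing this requires a careful analysis of solvable linear groups with few orbits on their modules, fed back through the successive terms of the Fitting series of $R$ to show $|F(R)|$, hence $\omega(|R|)$, hence $|R|$ is bounded. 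Once that is in place, the Héthelyi–Külshammer theorem and the Giudici–Morgan–Praeger input assemble to a bound on $|G|$ in terms of $n(G)$, which is the statement.
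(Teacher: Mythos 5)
Your global strategy coincides with the paper's: equivalently reformulate the claim as a bound on $|G|$ in terms of $n(G)$, split at the solvable radical $R=O_\infty(G)$, use the Giudici--Morgan--Praeger theorem to control the nonsolvable top, and then settle the solvable bottom separately, feeding $n(R)\le |G/R|\cdot n(G)$ and $n(G/R)\le n(G)$ through a recursive combination. Your nonsolvable analysis is essentially sound and close in spirit to the paper's Lemma~\ref{lemmanonsol}, though the routes differ in detail: you bound things at the level of nonabelian chief factors and use $C_{G/R}(F^{*}(G/R))\le F^{*}(G/R)$, whereas the paper works directly with the socle of $G/R$, bounds the kernel $B$ of the action on the simple factors via two-generation ($|B|\le|\mathrm{Soc}(G)|^{2}$), bounds $|G/B|\le t!$, and bounds $t\le n(G)$ with the same support-size trick on involutions that you use. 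Either variant is acceptable. Your reduction of the solvable case through the Héthelyi--Külshammer theorem (bound $\kpp(R)\le 1+\omega(|R|)\,n(R)$, then it suffices to bound $\omega(|R|)$, hence $|F(R)|$) is a genuinely different route from the paper, and is a nice observation.

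However, the proposal has a real and acknowledged gap exactly at the solvable endgame, and this gap is the heart of the matter. You correctly diagnose that the naive per-chief-factor bound fails because of $\Gamma\mathrm{L}(1,p^{d})$-type actions, and you correctly state that what is needed is a structure theorem for solvable linear groups with few orbits; but you then write ``establishing this requires a careful analysis ... once that is in place ...'' without supplying it or citing a result that does. The paper closes precisely this gap by invoking \cite[Theorem~2.1]{Keller1}, a dichotomy for completely reducible faithful actions of solvable groups: either the module has order polynomially bounded in the orbit count (then P\'alfy and Wolf bound the whole group), or there is a chain of two abelian normal sections topped by a permutation group of degree logarithmic in the orbit count (handled via Dixon's bound and the nilpotent-length machinery of Lemmas~\ref{newl2} and~\ref{l3}). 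Without this ingredient, or an equivalent, the argument does not bound $|F(R)|$ (and hence not $\omega(|R|)$, and hence not $|R|$), so the proposal as written does not prove the theorem. A secondary, minor point: the recursive assembly of the pieces (what the paper handles carefully via Lemma~\ref{newl2}) is asserted rather than carried out, but this is routine once the solvable case is in hand.
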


%The theorem of Héthelyi and Külshammer \cite{hethelyi-kulshammer2005} implies that $k_{p}(G)+k_{p'}(G)$ is bounded from below in terms of $|G|$ only. Here $p$ is a prime and $k_{p'}(G)$ denotes the number of conjugacy classes of elements of $G$ whose orders are coprime to $p$. The idea that in some results on $k(G)$ one can replace $k(G)$ by $k_{p}(G) + k_{p'}(G)$ is not
%completely new. 
We remark that another precursor to our Theorem \ref{main2} is due to Moret\'o and Nguyen \cite[Theorem 1.1]{MN}. They bound ``large parts of $|G|$" from above in terms of $k_{p'}(G)$ only. More precisely, they show that if $O_{\infty}(G)$ denotes the solvable radical of $G$, then $|G/O_{\infty}(G)|$ is $k_{p'}(G)$-bounded, and $|O_{\infty}(G)/F(G)|$ is metabelian by  $k_{p'}(G)$-bounded. 
Note that in Passman \cite[Corollary 3.5]{passman} another result along these lines is proved which is attributed to Guralnick and states that
$|G/O^{p'}(O_{\infty}(G))|$ is $k_{p'}(G)$-bounded. Theorem \ref{main2} complements these lower bounds for ``large parts of $|G|$" and shows that replacing the slightly more ``local" $k_{p'}(G)$ by the more global $n(G)$ allows for a bound of all of $|G|$, not just portions of $|G|$.
% just in terms of $k_{p'}(G)$, and it should come as no surprise that
% \cite[Theorem 1.1]{MN} will play a crucial role in the proof of Theorem \ref{main2}.

There are many improvements of Landau's theorem in the literature. Here we give one example. The solvable conjugacy class graph $\Gamma(G)$ of a finite group $G$ is defined to be the graph with vertex set $\{ x^{G} : 1 \not= x \in G \}$ with an edge between vertices $x^G$ and $y^G$ if there are elements $x' \in x^{G}$ and $y' \in y^{G}$ with $\langle x', y' \rangle$ solvable. Bhowal, Cameron, Nath, Sambale \cite[Theorem 3.5]{BCNS} recently proved the following generalization of Landau's theorem: for any positive integer $d$, there are only finitely many finite groups $G$ such that the clique number of $\Gamma(G)$ is $d$. 

We now turn to the second topic of this paper. Let $p$ be a prime. In \cite{hethelyi-kulshammer2000} H\'{e}thelyi and K\"{u}lshammer proved that for solvable groups $G$ of order divisible by $p$
we have $k(G)\geq 2\sqrt{p-1} $. This was the origin of a by now extensive and still growing body of literature on generalizations and strengthenings of all kinds. For example, the second author \cite{Keller2} extended the bound to arbitrary groups for large $p$, 
and the third author \cite{Attila2016} proved it for arbitrary groups and all primes.
One of the more recent results is due to Hung and the third author of this paper \cite[Theorem 1.1]{HM} who proved that if $p$ divides the order of a finite group $G$, then
\[  k_{p}(G) + k_{p'}(G)  \geq   2\sqrt{p-1}    \]
with equality if and only if $\sqrt{p-1}$ is an integer and $G=C_p\rtimes C_{\sqrt{p-1}}$ is a Frobenius group (when $p>2$) or $G = C_2$ (when $p=2$).  Here $k_{p'}(G)$ denotes the number of conjugacy classes of elements of $G$ whose orders are coprime to $p$.
In  \cite{hethelyi-kulshammer2000} H\'{e}thelyi and K\"{u}lshammer also conjectured that for any finite group $G$ the number $k(B)$ of complex irreducible characters in a $p$-block $B$ of $G$ is $1$ or is at least $2 \sqrt{p-1}$. This was proved for the principal block $B = B_{0}$ by Hung and Schaeffer Fry \cite{HSch}. 
%In recent years there has also been strong
%interest in bounding $k(G)$ from below in terms of a prime number $p$ dividing the order of $G$. %In 2000, L. H\'{e}thelyi and B. K\"{u}lshammer proved that for solvable groups we have $k(G)\geq %2\sqrt{p-1}$ and conjectured that this bound should hold for arbitrary finite groups. This result %initiated a by now tremendous body of literature.
%Of course researchers tried to prove the conjecture, and after some intermediate steps ultimately %did so in \cite{maroti}. The case of when equality occurred was also fully characterized: It can %only happen when $G=C_p\rtimes C_{\sqrt{p-1}}$ is a Frobenius
%group (when $p>2$) or is $C_2$ (when $p=2$). Obviously, such groups can only exist for primes $p$ %for which $p-1$ is a perfect square, and it is a hard open problem in number theory to determine %whether infinitely many such primes exist.
%But people also studied conditions under which a stronger bound could be obtained, see e.g. %\cite{hethelyi-kulshammer2003}, \cite{MS}, or \cite{keller-moreto2023}. Then in \cite{MN}, N. N. %Hung and the third author noticed, as has been hinted at above, that the lower bound %$2\sqrt{p-1}$ still holds true for $k_{p}(G) + k_{p'}(G)$ in place of $k(G)$, that is,
%\[(*)\ \ \ k_{p}(G) + k_{p'}(G)  \geq   2\sqrt{p-1}      \]
%for all finite groups $G$.
In a somewhat similar spirit, Hung, Sambale and Tiep \cite[Theorem 1.1]{HungSambaleTiep} proved that if $p$ is a prime dividing the order of a finite group $G$ and all nontrivial $p$-elements are conjugate in $G$, then one of the following holds.
(i) $k_{p'}(G)\geq p$; (ii) $k_{p'}(G) = p-1$ and $G=C_p\rtimes C_{p-1}$ is a Frobenius group when $p$ is odd and is $C_2$ when $p=2$; (iii) $p=11$, $k_{p'}(G)=9$,
and $G$ is the minimal nonsolvable Frobenius group, that is, $G=C_{11}^2\rtimes \mbox{SL}(2,5)$.

%It is worth mentioning that there are also block versions of some of the results
%discussed so far. In \cite{HSch} Hung and Schaeffer Fry prove that if $G$ is a finite group of %order divisible by $p$, then $k(B_0)\geq 2\sqrt{p-1}$, where $k(B_0)$ is the number of ordinary %characters in the principal block $B_0$ of $G$. This confirmed,
%for the prinicipal block,
%a question raised already by H\'{e}thelyi and K\"{u}lshammer for arbitrary blocks in their 2000 %paper.
The idea of studying $k_{p}(G)$ and $k_{p'}(G)$ for a finite group $G$ is at the heart of our second main result. 
While the bound $2\sqrt{p-1}$ in some of the earlier results above is sharp for some special primes and certain kinds of Frobenius groups, there has been a latent feeling in the community that something is still missing with regards to these lower bounds, or, as Hung, Sambale and Tiep state in
\cite{HungSambaleTiep} that:  ``As it
is obvious from the bound itself that equality could occur only when $p-1$ is a
perfect square, a `correct' bound remains to be found".
The correct bound was recently found by the first and second authors of this paper in \cite{ck} as they noticed that one must
take the arithmetic structure of $p-1$ into account.
Namely, if we write $p-1=ab$ for positive integers $a$ and $b$ with minimal distance (that is, $|a-b|$ is minimal), then they conjecture for a finite group $G$ that   $k(G)\geq a+b$ with equality if and only if $G = C_p \rtimes C_{a}$ or $G = C_p \rtimes C_{b}$, with $C_G(C_p) = C_p$. In \cite{ck} this conjecture is proved for large primes $p$ (using the McKay Conjecture for non-$p$-solvable groups) and for solvable groups $G$. For solvable groups, however, it had already been observed much earlier by H\'{e}thelyi and K\"{u}lshammer in
\cite[Remark (ii)]{hethelyi-kulshammer2003}.
%Note that, by Landau's bound discussed above, this implies that there are at most finitely many groups which are potentialcounterexamples to the $a+b$ lower bound.\\

All these observations motivated us to prove a strengthened form of the previous conjecture.

\begin{theorem}\label{main}
Let $G$ be a finite group and $p$ a prime dividing $|G|$. One of the following holds.
\begin{enumerate}
\item[(i)] There exists a factorization $p-1=ab$ with $a$ and $b$ positive integers such that $k_{p}(G)\geq a$ and $k_{p'}(G)\geq b$, with equality in both cases if and only if $G=C_p \rtimes C_b$ such that $C_G(C_p) = C_p$.
\item[(ii)] $p=11$ and $G=C_{11}^2\rtimes \text{\rm SL}(2,5)$.
\end{enumerate}
\end{theorem}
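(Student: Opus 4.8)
The plan is to argue by induction on $|G|$: suppose $(G,p)$ is a counterexample with $|G|$ minimal and derive a contradiction. Two elementary facts drive the reductions. If $N\trianglelefteq G$ is a $p'$-subgroup, then every $p$-element of $G/N$ lifts to a $p$-element of $G$, uniquely up to $N$-conjugacy, so $k_p(G)=k_p(G/N)$; and the induced surjection from the $p$-regular classes of $G$ onto those of $G/N$ has the set of classes contained in $N$ as a fibre of size $\ge 2$ above the trivial class, so $k_{p'}(G)\ge k_{p'}(G/N)+1$ when $N\ne 1$. Apply the inductive hypothesis to $G/\mathbf{O}_{p'}(G)$. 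If it satisfies (i) with a factorization $p-1=ab$, then $G$ satisfies the inequalities of (i) with the same factorization and with strict inequality in the $k_{p'}$-part; since $\mathbf{O}_{p'}(C_p\rtimes C_b)=1$ whenever $C_G(C_p)=C_p$, the group $G$ is not the extremal one, so the equivalence in (i) holds vacuously and $G$ is not a counterexample. If $G/\mathbf{O}_{p'}(G)$ satisfies (ii), then $k_p(G)=k_p(G/\mathbf{O}_{p'}(G))=1$ and the theorem of Hung--Sambale--Tiep \cite{HungSambaleTiep} applies directly to $G$; its three conclusions either yield (i) for $G$ or name groups ($C_p\rtimes C_{p-1}$, $C_{11}^2\rtimes\mathrm{SL}(2,5)$) with trivial $\mathbf{O}_{p'}$, contradicting $\mathbf{O}_{p'}(G)\ne 1$. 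So from now on $\mathbf{O}_{p'}(G)=1$, hence $\mathbf{F}^*(G)=\mathbf{O}_p(G)\,E(G)$ is self-centralizing.

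If $k_p(G)=1$, the theorem follows at once from \cite{HungSambaleTiep}; so assume $k_p(G)\ge 2$, and first suppose a Sylow $p$-subgroup $P$ is cyclic. Put $e=|N_G(P):C_G(P)|$; this is a divisor of $p-1$, and $e=\ell(B_0(G))$, the number of irreducible Brauer characters in the principal block, by the theory of blocks with a cyclic defect group. Since $P$ is abelian, Burnside's fusion theorem shows the $p-1$ elements of order $p$ in $P$ form exactly $(p-1)/e$ conjugacy classes of $G$, so $k_p(G)\ge (p-1)/e$, while $k_{p'}(G)=\ell(G)\ge\ell(B_0(G))=e$. Take $a=(p-1)/e$ and $b=e$. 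If $|P|=p^n$ with $n\ge 2$, the classes of elements of orders $p^2,\dots,p^n$ supply $n-1$ further $p$-classes, so (i) holds with strict inequality in $k_p$. If $|P|=p$, equality holds automatically in the $k_p$-part, and it remains to show $k_{p'}(G)=e$ forces $G=C_p\rtimes C_e$: then $\sum_B\ell(B)=k_{p'}(G)=e=\ell(B_0(G))$, so $G$ has a unique $p$-block and hence no $p$-block of defect zero; but a non-$p$-solvable $G$ with $|P|=p$ has a nonabelian simple composition factor $L$ with $p\mid|L|$ and cyclic Sylow $p$-subgroup, such $L$ has a $p$-block of defect zero, and since $|G|_p=p$ a short Clifford-theoretic argument lifts a defect-zero character through a section $M/N\cong L$ ($N\trianglelefteq M\trianglelefteq G$) to one of $G$ --- a contradiction; so $G$ is $p$-solvable, whence $\mathbf{O}_p(G)=P$ is self-centralizing, $G/P\hookrightarrow\mathrm{Aut}(C_p)$, and $G=C_p\rtimes C_e$ with $C_G(C_p)=C_p$.

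There remains the case $\mathbf{O}_{p'}(G)=1$, $k_p(G)\ge 2$, and a \emph{noncyclic} Sylow $p$-subgroup, so $|P|\ge p^2$; such $G$ is never the extremal group, so it suffices to produce \emph{one} factorization $p-1=ab$ with $k_p(G)\ge a$ and $k_{p'}(G)\ge b$. If $G$ is $p$-solvable, then $E(G)=1$, so $V:=\mathbf{O}_p(G)=\mathbf{F}^*(G)$ is self-centralizing with $|V|\ge p^2$; this is a $k(GV)$-type configuration, and I would obtain the required factorization --- with strict inequality somewhere, forced by $|V|\ge p^2$ --- by adapting to $p$-solvable groups the counting and representation-theoretic arguments of \cite{ck} and \cite{hethelyi-kulshammer2003}, extracting from them the separate estimates for $k_p(G)$ and $k_{p'}(G)$ rather than only the bound on $k(G)$. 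If $G$ is not $p$-solvable, then by the Classification of Finite Simple Groups $G$ has a nonabelian simple composition factor $L$ with $p\mid|L|$, hence a chief factor $M/N\cong L^t$; counting the $G$-classes of $p$-elements and the $p$-regular classes contributed by this section, and invoking the known lower bounds for the number of $\mathrm{Aut}$-orbits on the $p$-elements and for the number of $p$-regular classes of a finite simple group, I would produce a factorization $p-1=ab$ with $k_p(G)\ge a$ and $k_{p'}(G)\ge b$.

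The hard part is the last paragraph, and within it the non-$p$-solvable case. The genuine difficulty is not bounding $k_p(G)$ and $k_{p'}(G)$ separately but pairing them by a \emph{single common} factorization $p-1=ab$, which demands a uniform analysis of the finite simple groups relating the number of automorphism orbits on their $p$-elements to their number of $p$-regular classes; groups assembled from $\mathrm{PSL}(2,p)$-like factors --- cyclic Sylow $p$-subgroup, very few $p$-classes --- are the tightest case and pin down the precise shape of the bound. A secondary obstacle is the equality analysis for $|P|=p$ above, which rests on the fact that a non-$p$-solvable group with Sylow $p$-subgroup of order $p$ has a $p$-block of defect zero. The exceptional group $C_{11}^2\rtimes\mathrm{SL}(2,5)$ enters only through the input \cite{HungSambaleTiep}, exactly as in conclusion (ii).
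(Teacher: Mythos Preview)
Your inductive frame and your treatment of the cyclic Sylow case are essentially the paper's: the paper also uses block theory (Brauer's description of the principal block with cyclic defect) to get $k_p(G)\ge (p-1)/e$ and $k_{p'}(G)\ge e$, and for the equality case it invokes Harris's theorem that $G$ has a unique $p$-block iff $F^*(G)=O_p(G)$, which is tidier than your proposed defect-zero lifting.

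The real discrepancy is in where the work lies. You say the hard part is the non-$p$-solvable case; in the paper it is by far the easy one. Once $k_p(G)\ge 2$ is assumed, the paper reduces to an almost simple group with socle $S$ of order divisible by $p$ and then shows, using \cite[Theorem 6.2]{HungSambaleTiep} and the explicit lower bounds on the number of $\mathrm{Aut}(S)$-orbits of $p'$-elements from \cite{HM}, that either $k_{p'}(G)\ge p$ (so $a=1$, $b=p-1$ works) or a short outer-automorphism computation finishes things. This occupies one short section.

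What occupies the bulk of the paper is exactly the case you wave off: $G$ $p$-solvable with noncyclic Sylow. Your plan is to ``adapt \cite{ck} and \cite{hethelyi-kulshammer2003}, extracting separate estimates for $k_p(G)$ and $k_{p'}(G)$''. But those papers only bound $k(G)\ge a+b$; they do not contain, nor do their methods directly yield, separate bounds on $k_p$ and $k_{p'}$ tied to a \emph{common} factorization. Producing such bounds is precisely the new content. The paper first reduces to $G=HV$ with $V$ a faithful irreducible $\mathbb{F}_pH$-module, $|H|$ coprime to $p$, and then shows $|V|\ge p^3$ (the $|V|=p^2$ case already takes nontrivial work). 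From there it needs: a prime-by-prime analysis for $p\le 43$; a detailed study of almost quasisimple linear groups in coprime characteristic to show $|H|\le |V|/q$ outside a short explicit list; a separate treatment of imprimitive modules induced from metacyclic stabilizers; and a case split over symplectic-type normal subgroups and small tensor factors. None of this is in \cite{ck} or \cite{hethelyi-kulshammer2003}, and your proposal gives no indication of how any of it would be avoided. So the gap is not a missing lemma but a misallocation of difficulty: the $p$-solvable, $|V|\ge p^3$ case is the theorem, and your outline does not address it.
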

%Note that in Case (b) of this conjecture we have $k(G)=10$, $k_{p'}(G)=9$, and $k_p(G)=1$,
%so it is easy to see that the conclusion of (a) does not hold here. We believe that this is the %only
%counterexample to Part (a) of Conjecture \ref{conj}
%because by the result of Hung, Sambale and Tiep discussed above it is the only counterexample to
%Conjecture \ref{conj}(a) when $k_p(G)=1$. So any other counterexample would have $k_p(G)\geq 2$
%which reduces the ``pressure" on $k_{p'}(G)$ being very large.
%Also, all other Frobenius groups of the form  $G=C_{p}^2\rtimes \mbox{SL}(2,5)$ are no %counterexamples.\\
%\indent In view of our observations above Conjecture \ref{conj} can be viewed as the %$a$-$b$-version of the
%inequality $(*)$ above, because it is a strengthening of that bound just like the
%$a+b$-conjecture is for the $2\sqrt{p-1}$ lower bound.

Theorem \ref{main} was already proved in some important special cases a long time ago. The inequalities $k_{p}(G)\geq a$ and $k_{p'}(G)\geq b$ follow immediately from Brauer's work as stated in \cite[Theorem 11.1]{Nav} in the case that $G$ has a Sylow $p$-subgroup of order $p$, by noting the well-known fact that $k_{p'}(G)$ is the number of irreducible $p$-Brauer characters of $G$. Theorem \ref{main} was also known for all groups $G$ with $k_p(G) \leq 3$ (see \cite[Theorem 1.1 and Section 2]{HungSambaleTiep}).

The paper is organized as follows. In Section 2, we prove Theorem \ref{main2}. The remaining sections are devoted to the proof of Theorem \ref{main}. In Section 3, we deal with groups having cyclic Sylow $p$-subgroups and for this we use Brauer's work on modular characters. In Section 4 we present three preliminary lemmas. Section 5 proves Theorem \ref{main} for non-$p$-solvable groups. In Section 6 we prove four steps in the case of $p$-solvable groups. Section 7 deals with all primes $p$ less than $47$. In Section 8 we prove a technical result on coprime actions of almost quasisimple linear groups. In Section 9 we prove Theorem \ref{main} in the special case when the irreducible module for the relevant linear group is imprimitive and when the module is induced from a module for a subgroup whose factor group modulo the kernel of the action is metacyclic. In Section 10 we describe certain linear groups having few orbits. Finally, in Section 11 we finish the proof of Theorem \ref{main}.

\section{Proof of Theorem \ref{main2}}

In this section we prove Theorem \ref{main2}. 

%For a finite group $G$ and a prime $p$, let $k_{p}(G)$ be the number of conjugacy classes of nontrivial $p$-elements in $G$ and let $n(G)$ be the maximum of $k_{p}(G)$ as $p$ ranges over all the prime factors $p$ of $|G|$. 

%In the first part of our proof we follow the ideas of Héthelyi and Külshammer \cite{hethelyi-kulshammer2005}. The second part of %the proof of Theorem \ref{main2} uses ideas from the proof of \cite[Theorem 1.1]{MN} mentioned in the Introduction. 

We start with nonabelian finite simple groups. For a nonabelian finite simple group $T$ and a prime $p$, let $m_{p}(T)$ be the number of orbits of the automorphism group $\mathrm{Aut}(T)$ of $T$ on the set of nontrivial $p$-elements of $T$. Let $m(T)$ be the maximum of $m_{p}(T)$ taken over all prime divisors $p$ of $|T|$. Clearly, $m(T) \leq n(T)$. 

The following is \cite[Theorem 1.1]{GMP1} (with $m$ here is $m-1$ in their paper).

\begin{theorem}[Giudici, Morgan, Praeger]
\label{newstar}
There exists an increasing function $f_1$ on the set of natural numbers such that whenever $T$ is a nonabelian finite simple group, then $|T| \leq f_{1}(m(T))$.  
\end{theorem}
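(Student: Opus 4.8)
The plan is to reduce to a finiteness statement and then run through the classification of finite simple groups. The assertion is equivalent to: for every $k\in\NN$ there are only finitely many nonabelian finite simple groups $T$ with $m(T)\le k$; granting this, $f_1(k):=\max\{|T|:T\text{ nonabelian simple},\,m(T)\le k\}$ is well defined, and replacing $f_1$ by $k\mapsto\max_{j\le k}f_1(j)$ makes it increasing. I will use repeatedly that the number of $\mathrm{Aut}(T)$-orbits on an $\mathrm{Aut}(T)$-invariant set is at least the number of $T$-orbits divided by $|\Out(T)|$. There are only finitely many sporadic groups, so they may be ignored. For $A_n$ with $n\ne6$ one has $\mathrm{Aut}(A_n)=S_n$; taking $p=2$, a product of $k$ disjoint transpositions lies in $A_n$ precisely when $k$ is even, and different even $k\le n/2$ give pairwise non-$S_n$-conjugate $2$-elements, so $m_2(A_n)\ge\lfloor n/4\rfloor\to\infty$; hence $m(T)\le k$ bounds $n$, and $A_6$ is a single group. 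It remains to treat the groups of Lie type, which I organize by (twisted) rank.

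Let $T$ be simple of Lie type in characteristic $\ell$, of rank $r$, over $\mathbb{F}_q$ with $q=\ell^f$, let $G$ be the ambient simple algebraic group, and let $F$ be the defining Frobenius. \emph{Step 1: bounding the rank.} Field automorphisms of $T$ fix each $F$-stable unipotent class of $G$ (these are governed by Dynkin-diagram combinatorics, independently of $q$), diagonal automorphisms fix $G$-classes, and each such $G$-class meets $T$ by Lang--Steinberg; only the at most $6$ graph automorphisms can permute the unipotent $G$-classes. Hence the number of $\mathrm{Aut}(T)$-orbits on unipotent ($=\ell$-)elements is at least $u(r)/6$, where $u(r)$ is the number of $F$-stable unipotent classes of $G$; since $u(r)\to\infty$ for the classical types (it grows like a partition function of $r$) and the exceptional types have bounded rank, $m(T)\le k$ forces $r\le r_0(k)$. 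From now on $r$, the Weyl group order $|W|$, and the diagonal factor $d\le r+1$ are bounded, and $|\Out(T)|\le 6(r+1)f$. \emph{Step 2: rank $\ge2$.} In a maximally split maximal torus $S\le T$, the elements of $p$-power order for a suitable prime $p$ dividing $q-1$ (or $q+1$ in the unitary/twisted cases) form, up to a bounded central correction, a group $\cong(C_{(q-1)_p})^{r}$, where $(q-1)_p$ denotes the $p$-part; since the generic (regular) elements of $S$ are $T$-conjugate only when $N_T(S)$-conjugate, $m_p(T)\ge c(r)\,(q-1)_p^{\,r}/f$. If $m(T)\le k$ then $(q-1)_p^{\,r}\le C(r,k)\,f$ for every such $p$; applied to a primitive prime divisor $p$ of $\ell^f-1$ (present by Zsygmondy outside a finite list of $(\ell,f)$ all with $f$ small, and satisfying $p\equiv1\pmod f$, hence $p\ge f+1$) this gives $(f+1)^r\le C(r,k)f$, forcing $f$ bounded because $r\ge2$. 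With $f$ bounded, $q\to\infty$ means $\ell\to\infty$, so $|\Out(T)|$ is bounded; then $m(T)\le k$ makes every prime power dividing $q-1$ bounded, i.e. $q-1$ is $B$-powersmooth for a bounded $B$, whence $q-1\le B^{\pi(B)}$ (with $\pi$ the prime-counting function) is bounded — a contradiction. Thus all rank-$\ge2$ groups of Lie type, including the exceptional ones, with $m(T)\le k$ have bounded order.

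\emph{Step 3: rank $1$}, i.e. $T\in\{\PSL(2,q),\ \mathrm{PSU}(3,q),\ {}^2B_2(q),\ {}^2G_2(q)\}$; this is the crux, since the exponent $r=1$ makes Step 2 vacuous. Take $T=\PSL(2,q)$. For each prime $p$, the split or the nonsplit torus contains, up to a factor dividing $2$, a cyclic $p$-subgroup of order $(q^2-1)_p$, on which the relevant subgroup of $\mathrm{Aut}(T)$, generated by the Weyl involution and a field automorphism acting as $x\mapsto x^\ell$, has order at most $c_0 f$ for an absolute constant $c_0$; hence $m_p(T)\ge\bigl((q^2-1)_p-1\bigr)/(c_0 f)$. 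If $m(T)\le k$, every prime power dividing $q^2-1$ is at most $B:=c_0 kf+1$, so $q^2-1\le B^{\pi(B)}=\exp\!\bigl((1+o(1))B\bigr)$, which forces $f\log\ell=\log q\le(c_0 k/2+o(1))f$ and hence $\ell$ bounded in terms of $k$. If moreover $f\to\infty$ (if $f$ is bounded, so is $q$), take $p$ to be the largest prime factor of $\ell^f-1$: the $p$-part of the split torus has order $(\ell^f-1)_p$, the acting subgroup of $\mathrm{Aut}(T)$ has order $\le 2\,\mathrm{ord}_{(\ell^f-1)_p}(\ell)\le2f$, so $m_p(T)\ge(p-1)/(2f)$, and by Stewart's lower bound for the greatest prime factor of $\ell^f-1$ (of the order $f\exp(c\log f/\log\log f)$ for $f$ large, uniformly for $\ell$ in the bounded range) this tends to infinity — a contradiction. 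Hence $q$ is bounded. The families $\mathrm{PSU}(3,q)$, ${}^2B_2(q)$, ${}^2G_2(q)$ are treated the same way, now using a cyclic maximal torus of order $\Phi_6(q)=q^2-q+1$, $\Phi_4(q)=q^2+1$, or a Ree torus of order $q\pm\sqrt{3q}+1$ (or a Weyl-conjugate), the analogous power-smoothness step to bound the characteristic, and Stewart-type bounds for the greatest prime factors of these cyclotomic values to bound $f$.

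Assembling the sporadic, alternating, and three Lie-type steps, $m(T)\le k$ bounds $|T|$ for every nonabelian finite simple group $T$, which proves the finiteness claim and hence the theorem. I expect the decisive obstacle to be Step 3 for $\PSL(2,q)$: the elementary counting of Steps 1--2 only keeps $m(\PSL(2,q))$ bounded below by a positive constant, and forcing it to infinity genuinely requires an analytic input of the strength of Stewart's theorem on the greatest prime factor of $\ell^f-1$ (via Baker's method for linear forms in logarithms). A secondary burden, running through all of Step 3, is keeping exact track of how field, diagonal, and graph automorphisms act on the pertinent maximal tori of the rank-one groups.
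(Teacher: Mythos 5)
There is nothing in the paper to compare your argument with: Theorem \ref{newstar} is not proved here, it is imported verbatim (with attribution) from Giudici--Morgan--Praeger \cite{GMP1}, and only its statement is used in Section 2. Judged on its own terms, your outline follows the natural classification strategy and is essentially viable: the reduction to a finiteness statement, the sporadic and alternating cases, the rank bound via unipotent classes (field and diagonal automorphisms preserve the relevant classes of the algebraic group, only graph automorphisms can fuse them), and the bounded-rank case via Aut-orbits of toral $p$-elements for a Zsigmondy prime are all sound. One repair in Step 2: the justification ``the generic (regular) elements of $S$ are $T$-conjugate only when $N_T(S)$-conjugate'' should either be replaced by the cleaner fact that a semisimple class of the ambient algebraic group meets a fixed maximal torus in a single Weyl-group orbit (so each $\mathrm{Aut}(T)$-class meets $S^F$ in at most $|W|\cdot|\Out(T)|$ elements, regular or not), or supplemented by a case split, since for your Zsigmondy prime the $p$-part of the torus can be as small as $f+1$ and then non-regular elements need not be negligible; one also has to note that for $f$ large the chosen prime cannot divide the center, so the passage to the simple group costs nothing.

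Two remarks on Step 3, which is indeed the crux. First, as written the Suzuki and Ree cases are not covered by any off-the-shelf ``Stewart-type'' statement: Stewart's theorem concerns the greatest prime factor of $\ell^n-1$ (more precisely of its primitive part $\Phi_n(\ell)$), not of the factors $q\pm\sqrt{2q}+1$ or $q\pm\sqrt{3q}+1$; you must either observe that the large prime factor of $\Phi_{4f}(2)$, resp.\ $\Phi_{6f}(3)$, divides $q^2+1$, resp.\ $q^2-q+1$, hence one of the two coprime torus orders, or simply run your $\PSL(2,q)$ argument in the cyclic maximal torus of order $q-1$ which all four rank-one families possess. Second, your closing claim that input of the strength of Stewart's theorem is genuinely required is not correct. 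Your own torus count shows that $m(T)\le k$ forces every prime power dividing $q^2-1$ to be at most $ckf$ for an absolute constant $c$. Now consider $\Phi_{2f}(\ell)\mid q^2-1$: apart from at most one exceptional prime (the largest prime factor of $2f$, occurring to the first power), every prime dividing $\Phi_{2f}(\ell)$ is $\equiv 1\pmod{2f}$, and there are at most $ck/2$ such primes below $ckf$. Hence $\Phi_{2f}(\ell)\le 2f\,(ckf)^{ck/2}$, which is polynomial in $f$ for fixed $k$, whereas $\Phi_{2f}(\ell)>\ell^{\varphi(2f)}/4$ grows superpolynomially in $f$. This bounds $f$, and then $q$, by entirely elementary means, so the appeal to Baker-type machinery is a convenience rather than a necessity; the same cyclotomic trick disposes of the remaining rank-one families.
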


Let $O_{\infty}(G)$ denote the largest normal solvable subgroup of the finite group $G$. Theorem \ref{newstar} has the following extension. 

\begin{lemma}
\label{lemmanonsol}	
If $G$ is a finite group with $O_{\infty}(G) = 1$ and $f_1$ is as in Theorem \ref{newstar}, then $$|G| \leq n(G)! (f_{1}(n(G)))^{2 n(G)}.$$
\end{lemma}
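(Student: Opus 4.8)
The plan is to exploit the structure of a finite group with trivial solvable radical. If $O_\infty(G)=1$, then the socle $N$ of $G$ is a direct product $N = T_1 \times \cdots \times T_r$ of nonabelian finite simple groups, and $G$ embeds into $\operatorname{Aut}(N)$ acting faithfully by conjugation; moreover $C_G(N)=1$. Group the simple factors into $G$-orbits: say the $T_i$ fall into isomorphism-and-conjugacy types so that $N = S_1 \times \cdots \times S_s$, where each $S_j$ is a product of $G$-conjugate copies of some simple group $U_j$. The first step is to bound the number of simple factors. The standard argument is that distinct simple direct factors of the socle give rise to ``independent'' families of $p$-elements: if $p \mid |U_j|$, pick a nontrivial $p$-element $x_j \in T_{i}$ for one factor $T_i$ in the block $S_j$; then elements supported on distinct blocks, or on different numbers of factors within a block, are non-conjugate in $G$. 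Choosing a prime $p$ dividing the order of the socle and counting the resulting $p$-elements, one sees that the total number of simple factors $r$, and hence in particular $s$, is bounded by a function of $n(G)$ — in fact crudely $r \le n(G)$ after fixing a prime $p$ dividing $|T_1|$ and observing that the $r$ elements $(x,1,\dots,1),(x,x,1,\dots,1),\dots,(x,\dots,x)$ (for conjugate factors) together with analogous choices across non-conjugate blocks are pairwise non-$G$-conjugate nontrivial $p$-elements.

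The second step is to bound the order of each simple factor. Fix a block $S_j$ consisting of $m_j$ conjugate copies of $U_j$, and let $H_j$ be the setwise stabilizer in $G$ of the first factor; then $H_j$ induces on that factor a group lying between $\operatorname{Inn}(U_j)$ and $\operatorname{Aut}(U_j)$. For a prime $p \mid |U_j|$ and an $\operatorname{Aut}(U_j)$-orbit representative $u$ on nontrivial $p$-elements of $U_j$, the element $(u,1,\dots,1,\dots) \in N$ (with $u$ in one coordinate of the $j$-th block, $1$ elsewhere) is a nontrivial $p$-element of $G$, and two such elements coming from $\operatorname{Aut}(U_j)$-inequivalent choices of $u$ cannot be $G$-conjugate, since a $G$-conjugacy either permutes the coordinates or acts within a coordinate through $\operatorname{Aut}(U_j)$. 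Hence $m_p(U_j) \le k_p(G) \le n(G)$ for every prime $p \mid |U_j|$, so $m(U_j) \le n(G)$, and Theorem~\ref{newstar} gives $|U_j| \le f_1(n(G))$.

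The third step assembles the pieces. We have $N = \prod_j S_j$ with $\sum_j m_j = r \le n(G)$ simple factors, each of order at most $f_1(n(G))$, so $|N| \le f_1(n(G))^{n(G)}$. Since $C_G(N) = 1$, the conjugation action embeds $G$ into $\operatorname{Aut}(N)$, and $\operatorname{Aut}(N) \le \left(\prod_j \operatorname{Aut}(U_j) \wr \operatorname{Sym}(m_j)\right)$ has order at most $\left(\prod_j |\operatorname{Aut}(U_j)|^{m_j}\right) \cdot \prod_j m_j! \le |\operatorname{Aut}(N)|$; bounding $|\operatorname{Aut}(U_j)| \le |U_j|^{2}$ (a standard crude bound, since $|\operatorname{Aut}(U)| \le |U|^{\log_2 |U|}$ is more than enough, but even the cleaner $|\operatorname{Out}(U)| < |U|$ together with $|\operatorname{Aut}(U)| = |U|\,|\operatorname{Out}(U)|$ suffices to get an exponent-$2$ bound after collecting) and $\prod_j m_j! \le r! \le n(G)!$, we obtain
\[
|G| \le |\operatorname{Aut}(N)| \le n(G)! \cdot \prod_j |U_j|^{2 m_j} \le n(G)! \cdot f_1(n(G))^{2 \sum_j m_j} \le n(G)! \, \bigl(f_1(n(G))\bigr)^{2 n(G)},
\]
as claimed.

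The main obstacle is the bookkeeping in the first two steps: one must choose the prime $p$ and the $p$-elements carefully so that elements distinguished either by which simple factors they are supported on, or by which $\operatorname{Aut}$-orbit they lie in within a factor, are genuinely non-conjugate in the big group $G$ (not merely in the socle), and so that all the counts are simultaneously controlled by the single number $n(G)$ rather than by $k_p(G)$ for varying $p$. Getting the factor of $2$ in the exponent to come out exactly requires using a sufficiently clean bound on $|\operatorname{Aut}(U)|$ in terms of $|U|$ for nonabelian simple $U$; any bound of the form $|\operatorname{Aut}(U)| \le |U|^{2}$ closes the argument, and such a bound is elementary from the order formulas (or from $|\operatorname{Out}(U)|$ being small).
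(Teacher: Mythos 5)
Your proposal follows essentially the same route as the paper's proof: identify the socle as a direct product $T_1\times\cdots\times T_r$ of nonabelian simple groups, bound $r$ by $n(G)$, bound each factor's order by $f_1(n(G))$ via Theorem~\ref{newstar}, and bound $|G|$ through its faithful conjugation action on the socle. Your Step~2, showing $m(U_j)\le n(G)$ by observing that distinct $\mathrm{Aut}(U_j)$-orbits of nontrivial $p$-elements inside a single factor $T_i$ must land in distinct $G$-classes (any $g\in G$ conjugating one such element of $T_i$ to another must normalize $T_i$ and hence act on it through $\mathrm{Aut}(T_i)$), is correct and in fact spells out a detail that the paper simply asserts (``$n(G)\ge m(S_i)$''). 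Step~3 is also fine: the bound $|\mathrm{Aut}(U)|\le|U|^2$ holds for nonabelian simple $U$ (via $|\mathrm{Out}(U)|<|U|$, or, as the paper does, via two-generation), $\prod_j m_j!\le r!$, and the inequalities assemble as you say.

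The genuine gap is in Step~1. You bound $r$ by fixing a prime $p$ dividing $|T_1|$ and producing $r$ nontrivial $p$-elements of the socle with pairwise different support sizes, hedging with ``analogous choices across non-conjugate blocks.'' But if some factor $T_i$ in another $G$-orbit has $p\nmid|T_i|$ (e.g.\ $T_1\cong A_5$, $p=5$, and $T_2\cong\PSL_2(8)$), then $T_i$ has no nontrivial $p$-element at all, so you cannot place a nontrivial $p$-entry in that coordinate and the chain of support sizes $1,2,\dots,r$ cannot be realized; switching to a different prime for that block is no help, since the elements you produce are then not all $p$-elements for a single $p$ and so do not all count toward one $k_p(G)$. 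The paper's fix is to invoke the Feit--Thompson theorem: every nonabelian finite simple group has even order, so one may take $p=2$ uniformly, choose for each $j$ an involution of the socle projecting nontrivially onto exactly $j$ of the $r$ factors, and note that since $G$-conjugation permutes the factors, the support size is a conjugacy invariant; this gives $r\le k_2(G)\le n(G)$. You need such a uniform prime for Step~1 to close. (A minor remark: your intermediate inequality $|\mathrm{Aut}(N)|\le\bigl|\prod_j\mathrm{Aut}(U_j)\wr\mathrm{Sym}(m_j)\bigr|$ is not quite right, since $\mathrm{Aut}(N)$ may permute isomorphic factors lying in different $G$-orbits; but this is harmless because $G$ itself preserves its own orbits on the factors and so embeds into $\prod_j\mathrm{Aut}(U_j)\wr\mathrm{Sym}(m_j)$ directly, which is all the estimate uses.)
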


\begin{proof}
Let $G$ be a finite group with $O_{\infty}(G) = 1$. Let the socle of $G$ be $\mathrm{Soc}(G)$. This is the direct product $S_{1} \times \cdots \times S_{t}$ of nonabelian simple groups $S_{i}$ with $1 \leq i \leq t$ for some integer $t$. Let the kernel of the conjugation action of $G$ on the set $\{ S_{1}, \ldots , S_{t} \}$ be $B$. This is a subgroup of $\mathrm{Aut}(S_{1}) \times \cdots \times \mathrm{Aut}(S_{t})$. Since every finite simple group can be generated by two elements by \cite[Theorem B]{AG}, we have $|B| \leq {|\mathrm{Soc}(G)|}^{2}$. Thus $|B| \leq \prod_{i=1}^{t} {(f_{1}(m(S_{i})))}^{2}$ by Theorem \ref{newstar}. For every index $i$ with $1 \leq i \leq t$, we have $n(G) \geq m(S_i)$. Clearly, $|G/B| \leq t!$. These give 
\begin{equation}
\label{ee22}	
|G| \leq t! {(f_{1}(n(G)))}^{2t}. 
\end{equation}

Next we show that $t\leq n(G)$. Every group $S_i$ has even order by the Feit-Thompson theorem. For each $j$ with $1 \leq j \leq t$, let $g_j$ be an element of $\mathrm{Soc}(G)$ of order $2$ such that $g_j$ projects onto exactly $j$ factors nontrivially. These elements are pairwise non-conjugate. The claim follows.   

Inequalities (\ref{ee22}) and $t \leq n(G)$ give the statement of the lemma. 
\end{proof}

We continue with an elementary lemma. 

\begin{lemma} Let $N$ be a normal subgroup of a finite group $G$. Then
	\label{newl1}
	\begin{itemize}
		\item[(i)] $n(N) \leq n(G)\cdot|G:N|$ and
		\item[(ii)] $n(G/N) \leq n(G)$.
	\end{itemize} 
\end{lemma}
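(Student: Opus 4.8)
The plan is to reduce both inequalities to prime-by-prime statements about the quantities $k_{p}$, using two elementary facts: that $p$-elements pass nicely through quotients, and that a single $G$-conjugacy class lying inside a normal subgroup splits into a controlled number of $N$-conjugacy classes. Fix a prime $p$ throughout; it suffices to bound $k_{p}(G/N)$ and $k_{p}(N)$ appropriately, since taking the maximum over the relevant primes then gives the two claims (note that any prime dividing $|N|$ or $|G/N|$ also divides $|G|$, so the quantities on the right are defined and at most $n(G)$).

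For (ii), the key observation is that the natural projection $\pi\colon G\to G/N$ maps the set of nontrivial $p$-elements of $G$ onto the set of nontrivial $p$-elements of $G/N$. Indeed, given a nontrivial $p$-element $\bar{x}\in G/N$, choose any preimage $y\in G$ and write $y=y_{p}y_{p'}$ as the commuting product of its $p$-part and $p'$-part; then $\pi(y_{p})$ is the $p$-part of $\bar{x}$, which equals $\bar{x}$ because $\bar{x}$ is already a $p$-element, and $y_{p}\neq 1$ because $\bar{x}\neq 1$. Since $\pi$ is a homomorphism it carries $G$-conjugacy classes into $G/N$-conjugacy classes, so this surjection descends to a surjection on the corresponding sets of conjugacy classes, whence $k_{p}(G/N)\le k_{p}(G)\le n(G)$, proving (ii).

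For (i), since $N\trianglelefteq G$ the group $G$ acts by conjugation on the set of nontrivial $p$-elements of $N$, and a $G$-orbit here is precisely a $G$-conjugacy class of nontrivial $p$-elements of $G$ that happens to be contained in $N$; hence the number of such $G$-orbits is at most $k_{p}(G)\le n(G)$. Now fix a nontrivial $p$-element $x\in N$ and count the $N$-classes inside $x^{G}$: identifying $x^{G}$ with the coset space $C_{G}(x)\backslash G$ as a $G$-set, the $N$-classes in $x^{G}$ correspond to the $(C_{G}(x),N)$-double cosets in $G$, and since $N$ is normal these are exactly the right cosets of the subgroup $C_{G}(x)N$, of which there are $|G:C_{G}(x)N|\le |G:N|$. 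Summing over the at most $n(G)$ many $G$-classes of nontrivial $p$-elements contained in $N$ yields $k_{p}(N)\le n(G)\cdot|G:N|$, proving (i).

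I do not anticipate a genuine obstacle: both ingredients — lifting $p$-elements through a quotient via the commuting $p$/$p'$-decomposition, and the double-coset description of how a conjugacy class in a normal subgroup decomposes — are standard, and nothing beyond elementary group theory is needed. The only points deserving a little care are ensuring that nontriviality is preserved under the lift in (ii), and correctly identifying the $N$-classes inside $x^{G}$ with the cosets of $C_{G}(x)N$ in (i); both are routine.
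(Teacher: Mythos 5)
Your proof is correct and follows essentially the same approach as the paper. For (ii) the paper cites a reference (H\'ethelyi--K\"ulshammer) for the fact that $k_p(G/N)\le k_p(G)$, which you prove directly and cleanly via the $p$-part/$p'$-part decomposition of a preimage; for (i) the paper simply asserts that there are at least $k_p(N)/|G:N|$ $G$-classes of nontrivial $p$-elements inside $N$ (equivalently, each such $G$-class splits into at most $|G:N|$ $N$-classes), and your double-coset identification of the $N$-classes inside $x^G$ with the right cosets of $C_G(x)N$ is exactly the justification for that assertion.
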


\begin{proof}
	Let $p$ be a prime for which $k_{p}(N) = n(N)$. There are at least $k_{p}(N)/|G:N|$ conjugacy classes of nontrivial $p$-elements in $G$ lying inside $N$. Since $k_{p}(G) \leq n(G)$, part (i) of the lemma follows. The first two paragraphs of the proof of \cite[Lemma 1.2]{hethelyi-kulshammer2005} give $1 + k_{p}(G/N) \leq 1 + k_{p}(G)$ for every prime $p$. Part (ii) follows. 
\end{proof}	

We recursively define real valued functions $F_{n}(x)$ for real numbers $x$ for every nonnegative integer $n$. Let $F_{0}(x) = x$. For every positive integer $n$, let $\log_{2} (F_{n}(x))$ be $F_{n-1}(x)$. We will need the following technical lemma. 

\begin{lemma}
\label{newl2}	
Let $N$ be a normal subgroup in a finite group $G$. Let $f_1$ and $f_2$ be monotone increasing functions on the set of natural numbers such that $|N| \leq f_{1}(n(N))$ and $|G/N| \leq f_{2}(n(G/N))$. Then
$$
|G| \leq f_{1}(  f_{2}(n(G)) n(G)  )  f_{2}(n(G)).
$$ In particular, if $|N| \leq F_{n}(n(N))$ and $|G/N| \leq F_{m}(n(G/N))$ for positive integers $n$ and $m$, then $|G| \leq F_{n+m+2}(n(G))$.
\end{lemma}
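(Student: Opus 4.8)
The plan is to bound $|G|$ by feeding the hypotheses into a combination of Lemma~\ref{newl1} and an iterated-logarithm bookkeeping argument, and then to verify the ``in particular'' clause by checking how the iteration of $\log_2$ interacts with the two-step quotient structure.

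First I would set $n = n(G)$ for brevity and observe, via Lemma~\ref{newl1}(ii), that $n(G/N) \leq n(G) = n$, so monotonicity of $f_2$ gives $|G/N| \leq f_2(n(G/N)) \leq f_2(n)$. Next, Lemma~\ref{newl1}(i) yields $n(N) \leq n(G)\cdot |G:N| \leq n\cdot f_2(n)$; since $f_1$ is monotone increasing, $|N| \leq f_1(n(N)) \leq f_1(n\cdot f_2(n)) = f_1(f_2(n)\,n)$. Multiplying the two bounds $|G| = |N|\cdot|G/N|$ gives exactly
\[
|G| \leq f_1\bigl(f_2(n(G))\,n(G)\bigr)\,f_2(n(G)),
\]
which is the first assertion.

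For the ``in particular'' clause I would specialize $f_1 = F_n$ and $f_2 = F_m$ and carefully track logarithms. Writing $t = n(G)$, the general bound gives $|G| \leq F_n(F_m(t)\,t)\,F_m(t)$. The strategy is to show this is at most $F_{n+m+2}(t)$ by bounding each factor crudely: $F_m(t)\,t \leq F_m(t)^2 \leq F_{m+1}(t)$ for $t$ (hence $F_m(t)$) at least, say, $2$ — using that $\log_2(x^2) = 2\log_2 x \leq x = \log_2(2^x)$ once $x \geq 2$, i.e. $x^2 \leq 2^x$; hence $F_n(F_m(t)\,t) \leq F_n(F_{m+1}(t)) = F_{n+m+1}(t)$ by the recursive definition $\log_2 F_k = F_{k-1}$ applied $m+1$ times to peel $F_{m+1}$ down. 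Then $F_{n+m+1}(t)\,F_m(t) \leq F_{n+m+1}(t)^2 \leq F_{n+m+2}(t)$, again using $x^2 \leq 2^x$ with $x = F_{n+m+1}(t)$. The small cases where $t$ or the intermediate values are below the threshold are handled by noting that $n(G) = 0$ forces $G = 1$ and that for bounded $n(G)$ one can enlarge the universal functions harmlessly, or simply absorb a constant; I would phrase this so that $F_{n+m+2}$ is understood to be replaced by $\max$ with a suitable constant if needed.

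The main obstacle I anticipate is purely the bookkeeping with the tower functions $F_k$: one must be scrupulous that each application of $\log_2$ lowers the index by exactly one and that the crude inequality $x^2 \leq 2^x$ is invoked only where the argument is genuinely $\geq 2$, since for very small groups the bound can otherwise fail numerically. Everything else is a routine concatenation of the two preceding lemmas; there is no group-theoretic content beyond what Lemma~\ref{newl1} already provides, so the proof should be short once the inequalities among the $F_k$ are stated cleanly.
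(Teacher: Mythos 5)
Your proposal follows essentially the same route as the paper. The first half is identical: apply Lemma~\ref{newl1}(ii) to get $n(G/N)\le n(G)$, then Lemma~\ref{newl1}(i) together with the $f_2$-bound on $|G/N|$ to get $n(N)\le f_2(n(G))\,n(G)$, and multiply. The second half is the same in spirit; the paper's chain is $F_m(x)\cdot x\le 2^{F_m(x)}=F_{m+1}(x)$, $F_n(F_{m+1}(x))=F_{n+m+1}(x)$, and $F_{n+m+1}(x)\cdot F_m(x)\le F_{n+m+2}(x)$, whereas you insert the extra comparison $F_m(t)\,t\le F_m(t)^2$ and then use $y^2\le 2^y$.

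One small point deserves a correction: $x^2\le 2^x$ is \emph{not} true ``once $x\ge 2$'' --- it fails at $x=3$ (since $9>8$). Your argument survives nonetheless, because you only apply it with $y=F_k(t)$ for $k\ge 1$ and $t\ge 1$, and any such $F_k(t)$ is a power of $2$ that is at least $2$, hence lies in the set $\{2\}\cup\{4,5,6,\dots\}$ where $y^2\le 2^y$ does hold. You should either state the threshold correctly ($x=2$ or $x\ge 4$) or, more cleanly, follow the paper and use $F_m(x)\cdot x\le 2^{F_m(x)}$ directly, which avoids the issue. Your caution about small $n(G)$ is reasonable but unnecessary here: $n(G)=0$ forces $G=1$, and for $n(G)\ge 1$ with $m,n\ge 1$ all intermediate values of $F_k$ are large enough that no fudging of the functions is needed.
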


\begin{proof}
We have $n(G/N) \leq n(G)$ and $$n(N) \leq |G/N| \cdot n(G) \leq f_{2}(n(G/N)) \cdot n(G) \leq f_{2}(n(G)) \cdot n(G)$$ by Lemma \ref{newl1}. Thus, $|N| \leq f_{1}(  f_{2}(n(G)) \cdot n(G)  )$ and $|G/N| \leq f_{2}(n(G))$. The first claim follows. Let us prove the second claim. It is sufficient to see that $F_{n}(F_{m}(x) \cdot x) F_{m}(x) \leq F_{n+m+2}(x)$ for all natural numbers $x$. Clearly, $$F_{m}(x) \cdot x \leq 2^{F_{m}(x)} = F_{m+1}(x)$$ and $F_{n}(F_{m+1}(x)) = F_{n+m+1}(x)$. Finally, $F_{n+m+1}(x) \cdot F_{m}(x) \leq F_{n+m+2}(x)$.	
\end{proof}	

Lemma \ref{newl2} is needed in the proof of the following lemma. 

\begin{lemma}
\label{l3}	
Let $G$ be a finite group and let $\ell$ be a positive integer. If there is a chain $1 = N_{0} < N_{1} < \cdots < N_{\ell} = G$ of normal subgroups $N_{0}$, $N_{1}, \ldots , N_{\ell}$ in $G$ such that $N_{i+1}/N_i$ is nilpotent for every index $i$ with $0 \leq i \leq \ell-1$, then $|G| \leq F_{6 \ell -2}(n(G))$.
\end{lemma}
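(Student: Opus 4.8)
The plan is to argue by induction on $\ell$. The base case $\ell=1$ says precisely that a nilpotent group $G$ satisfies $|G|\le F_{4}(n(G))$, and this is where the substance of the argument lies. I would write $G=P_{1}\times\cdots\times P_{r}$ as the direct product of its Sylow subgroups, for the distinct primes $p_{1}<\cdots<p_{r}$ dividing $|G|$. Since a nontrivial $p_{i}$-element of $G$ must lie in $P_{i}$, and since in a direct product the conjugacy class of an element is the product of the classes of its components, two such elements are conjugate in $G$ exactly when they are conjugate in $P_{i}$; hence $k_{p_{i}}(G)=k_{p_{i}}(P_{i})=k(P_{i})-1$, so in particular $k(P_{i})\le n(G)+1$ for every $i$. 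Two elementary estimates then close the case. First, the center of the nontrivial $p_{r}$-group $P_{r}$ has order at least $p_{r}$ and contributes that many singleton conjugacy classes, so $n(G)\ge k(P_{r})-1\ge p_{r}-1$; as the $r$-th smallest prime is at least $r+1$, this forces $r\le p_{r}-1\le n(G)$. Second, the bound $k(H)\ge\log\log|H|$ recalled in the introduction (see \cite{ET}) gives $|P_{i}|\le 2^{2^{k(P_{i})}}=F_{2}(k(P_{i}))\le F_{2}(n(G)+1)$ for each $i$. Multiplying the at most $n(G)$ factors yields $|G|\le F_{2}(n(G)+1)^{\,n(G)}$, and a short computation using $x\le 2^{x}$ and $2x+1\le 2^{2^{x}}$ shows the right-hand side is at most $F_{4}(n(G))$.

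For the inductive step I would take $\ell\ge 2$ and set $N=N_{1}$, which is nilpotent and normal in $G$. The subgroups $N_{i}/N_{1}$ form a chain $1=N_{1}/N_{1}<N_{2}/N_{1}<\cdots<N_{\ell}/N_{1}=G/N$ of normal subgroups of $G/N$ of length $\ell-1$ with nilpotent successive quotients, so the induction hypothesis gives $|G/N|\le F_{6(\ell-1)-2}(n(G/N))=F_{6\ell-8}(n(G/N))$, while the base case applied to the nilpotent group $N$ gives $|N|\le F_{4}(n(N))$. I would then invoke Lemma \ref{newl2} with $f_{1}=F_{4}$ and $f_{2}=F_{6\ell-8}$, both monotone increasing and both carrying positive indices since $\ell\ge 2$ makes $6\ell-8\ge 4$; it delivers $|G|\le F_{4+(6\ell-8)+2}(n(G))=F_{6\ell-2}(n(G))$, as required.

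The conjugacy bookkeeping inside the direct product and the numerical check $F_{2}(x+1)^{x}\le F_{4}(x)$ are routine, and the induction itself is immediate once the exponents are matched up: the recursion $a_{\ell}=a_{1}+a_{\ell-1}+2$ coming from Lemma \ref{newl2}, with $a_{1}=4$, solves to $a_{\ell}=6\ell-2$, which is exactly the index in the statement. The one point I expect to require genuine care is the base case, and more precisely the bound $r\le n(G)$ on the number of primes dividing the order of a nilpotent group; the key input there is the simple observation that a $p$-group has at least $p$ conjugacy classes, forced by the nontriviality of its center.
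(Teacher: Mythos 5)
Your proof is correct, and it reaches the same bound $F_{6\ell-2}$ as the paper, but the route differs from the paper's in two places worth noting.

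For the base case $\ell=1$, the paper exploits a $p$-group--specific estimate, namely $k(P)\geq\log_2|P|$ for a $p$-group $P$ (which follows, e.g., from $k(P)\geq n(p-1)+1$ when $|P|=p^{n}$), and hence gets $|P|\leq F_{1}(n(G)+1)$ for each Sylow factor before multiplying. You instead fall back on the general Erd\H{o}s--Tur\'an bound $k\geq\log\log|\cdot|$, which only yields $|P_{i}|\leq F_{2}(n(G)+1)$; this is an iterated-exponential level weaker per factor, but the target $F_{4}(n(G))$ is so generous that your final inequality $F_{2}(n(G)+1)^{n(G)}\leq F_{4}(n(G))$ still goes through, as your numerical check shows. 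Both versions use the same reduction $k(P_{i})\leq n(G)+1$ and the same bound $r\leq n(G)$ on the number of Sylow factors, obtained from $|Z(P_{r})|\geq p_{r}$.

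For the inductive step, the paper organizes the induction differently: it first proves the estimate for chain lengths that are powers of $2$ by repeatedly splitting the chain in the middle, and then handles general $\ell$ via its binary expansion. You instead peel off $N_{1}$ and induct directly on $\ell$. Both are applications of Lemma \ref{newl2}, and both give the same exponent because the recursion $a_{\ell_{1}+\ell_{2}}=a_{\ell_{1}}+a_{\ell_{2}}+2$ is linear in the length, so the solution $a_{\ell}=6\ell-2$ is independent of how the chain is split. Your version is therefore shorter and just as rigorous; the one point you correctly flag is that Lemma \ref{newl2}'s `in particular' clause requires both indices to be positive integers, which holds since $4\geq 1$ and $6\ell-8\geq 4$ for $\ell\geq 2$.
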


\begin{proof}
Let $G$ be a finite group and let $t$ be a nonnegative integer. We start with a claim. If there is a chain $1 = N_{0} < N_{1} < \cdots < N_{2^{t}} = G$ of normal subgroups $N_{0}$, $N_{1}, \ldots , N_{2^{t}}$ in $G$ such that $N_{i+1}/N_i$ is nilpotent for every index $i$ with $0 \leq i \leq 2^{t}-1$, then $|G| \leq F_{2^{t+2} + 2^{t+1} -2}(n(G))$.	
	
We proceed to prove the claim. We argue by induction on $t$. 

Let $t=0$. In this case $G$ is nilpotent. If $P$ is a Sylow $p$-subgroup of $G$ for some prime $p$, then the number of conjugacy classes of $P$ is at least $\log_{2}|P|$, and so $|P| \leq F_{1}(n(P)+1) \leq F_{1}(n(G)+1)$. If $G$ is the direct product of its Sylow $p_i$-subgroups $P_i$ with $i$  satisfying $1 \leq i \leq k$ for some integer $k$ and the primes $p_i$ satisfying $p_{1} < \ldots < p_{k}$, then $$|G| \leq {(F_{1}(n(G)+1))}^{k} \leq {(F_{1}(n(G)+1))}^{p_{k} - 1} \leq {(F_{1}(n(G)+1))}^{n(G)} \leq F_{4}(n(G)).$$

Let $t > 0$. Assume that the claim is true for $t-1$. Let $N = N_{2^{t-1}}$. We have $|N| \leq F_{2^{t+1} + 2^{t} -2}(n(N))$ and $|G/N| \leq F_{2^{t+1} + 2^{t} -2}(n(G/N))$ by the induction hypothesis. This gives $|G| \leq F_{2^{t+2} + 2^{t+1} -2}(n(G))$ by Lemma \ref{newl2}, which proves the claim above. 	

Let $G$ be a finite group and let $\ell$ be a positive integer. Assume that there is a chain $1 = N_{0} < N_{1} < \cdots < N_{\ell} = G$ of normal subgroups $N_{0}$, $N_{1}, \ldots , N_{\ell}$ in $G$ such that $N_{i+1}/N_i$ is nilpotent for every index $i$ with $0 \leq i \leq \ell-1$. Let the binary expansion of $\ell$ be $\ell = \sum_{i=1}^{m} 2^{t_i}$ where each $t_i$ is a nonnegative integer. We will prove the lemma by induction on $m$. The previous claim gives the result for $m=1$. Let $m > 1$ and assume that the conclusion holds for $m-1$. Write $r = \sum_{i=1}^{m-1} 2^{t_i}$. The induction hypothesis provides $|N_r| \leq F_{6 r -2}(n(N_{r}))$ and $|G/N_{r}| \leq F_{6 (\ell - r) -2}(n(G/N_{r}))$ by the claim. The result now follows from Lemma \ref{newl2}.
\end{proof}

The next lemma deals with solvable groups.  

\begin{lemma}
\label{lemmasolvable}
If $G$ is a finite solvable group, then $$|G| < F_{23}(  (n(G)+1)^{149}  ).$$
\end{lemma}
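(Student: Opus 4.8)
The plan is to bound $|G|$ for a solvable group $G$ in terms of $n(G)$ by producing a normal series of $G$ with nilpotent factors whose length $\ell$ is itself bounded in terms of $n(G)$, and then invoking Lemma \ref{l3} to convert this into a bound of the form $F_{6\ell-2}(n(G))$. Concretely, the derived series $1 = G^{(d)} \trianglelefteq \cdots \trianglelefteq G^{(1)} \trianglelefteq G^{(0)} = G$ is a normal series with abelian (hence nilpotent) factors, so it suffices to bound the derived length $d = \mathrm{dl}(G)$ in terms of $n(G)$; then $\ell = d$ and Lemma \ref{l3} finishes the argument up to keeping track of the constants. So the entire problem reduces to: \emph{a finite solvable group $G$ has derived length bounded by a function of $n(G)$}, and in fact one needs an explicit polynomial-type bound, since the final statement claims $|G| < F_{23}((n(G)+1)^{149})$, and $F_{23}$ together with the exponent $149$ must absorb $F_{6\ell-2}$ applied to $n(G)$.

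First I would recall the key mechanism: if $G$ is solvable with $O_\infty(G) = F(G)$ playing the usual role, the chief series structure of $G$ acting on $F(G)/\Phi(F(G))$ controls the group, but the cleaner route is via a theorem bounding derived length of a solvable \emph{linear} (or \emph{permutation}) group in terms of the number of orbits, or via the classical fact that a completely reducible solvable subgroup of $\GL(n,q)$ has derived length $O(\log n)$ (this is essentially in work going back to Huppert, with explicit constants due to various authors). The point is that $n(G)$ controls $k_p(G)$ for the prime $p$ dividing $|F(G)|$, and via the action of $G/C_G(V)$ on a chief factor $V$ of $p$-power order, the number of $G$-orbits on $V$ bounds $k_p(G)$ from below; a solvable linear group with few orbits on its natural module has small dimension and small derived length. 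Iterating over the (few) primes dividing $|F(G)|$ and over the layers of $F(G)$, one bounds both $|F(G)|$ and the derived length of $G/F(G)$, hence $d$, in terms of $n(G)$. The precise numerology — why the exponent is $149$ and why $23$ nested logs suffice — will come from carefully composing: (a) the orbit-counting bound giving $|V| \leq n(G)$-bounded for each chief factor; (b) the number of primes dividing $|G|$ being at most $n(G)$-bounded (indeed if $p \mid |G|$ then $k_p(G) \geq 1$, and more primes force larger $n(G)$ via, e.g., elements of order $p$ in a Hall subgroup — though one must be slightly careful here); (c) the derived-length bound for solvable linear groups, which is logarithmic in the dimension.

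The key steps, in order: (1) Reduce to bounding the derived length $d$ of $G$ in terms of $n(G)$, using the derived series and Lemma \ref{l3}. (2) Show the number $\omega$ of distinct prime divisors of $|G|$ is $n(G)$-bounded. (3) For the Fitting subgroup, write $F(G) = \prod_p O_p(G)$ and bound each $|O_p(G)|$: using that $G$ acts on the Frattini quotient $O_p(G)/\Phi(O_p(G))$ as a completely reducible group, and that $k_p(G)$ bounds the number of $G$-orbits on this module from below, conclude the module dimension is $n(G)$-bounded, hence (since $p \leq $ something $n(G)$-bounded as well, because $C_p \leq G$ gives contributions) $|O_p(G)|$ is $n(G)$-bounded, and so $|F(G)|$ is $n(G)$-bounded. (4) Since $G/F(G)$ embeds in $\mathrm{Aut}(F(G))$ and $|F(G)|$ is $n(G)$-bounded, $|G/F(G)|$ — and in particular $d$ — is $n(G)$-bounded. (5) Assemble: $\ell = d$ is $n(G)$-bounded, so Lemma \ref{l3} gives $|G| \leq F_{6d-2}(n(G))$, and substituting the explicit bound on $d$ and simplifying the tower (using $F_a(F_b(x)) = F_{a+b}(x)$-type identities and $F_a(x^c) \leq F_{a+O(1)}(x)$-type estimates already exploited in Lemma \ref{newl2}) yields the stated $F_{23}((n(G)+1)^{149})$.

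The main obstacle I expect is step (3)–(4): getting a genuinely \emph{polynomial} (in $n(G)$) bound on $|F(G)|$ and then on the derived length, with constants tight enough to land at exponent $149$ rather than something wildly larger. The subtlety is that $k_p(G)$ counts conjugacy classes of $p$-\emph{elements} of $G$, not just $G$-orbits on one chief factor, so one has to relate these carefully — ideally $k_p(G)$ dominates the number of orbits on $O_p(G)/\Phi(O_p(G))$, but a $p$-element of $G$ need not lie in $O_p(G)$, so one likely passes to a Sylow $p$-subgroup $P$ and uses that classes of $P$ fuse into at most $|G:N_G(P)|$-to-one... actually the cleaner inequality is that the number of $G$-classes of nontrivial elements of $O_p(G)$ is at most $k_p(G)$, and that in turn is at least the number of $G$-orbits on $(O_p(G)/\Phi(O_p(G))) \setminus \{0\}$. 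Pinning down the right clean inequality here, and then feeding it into a sufficiently sharp ``few orbits $\Rightarrow$ small dimension and small derived length'' statement for solvable linear groups (there are such results with explicit polynomial bounds), is the technical heart. The exponent $149$ strongly suggests several such estimates are being composed, so the real work is bookkeeping to verify the composite exponent and the number $23$ of iterated exponentials.
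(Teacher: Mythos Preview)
Your plan has a structural gap that prevents it from reaching the stated bound. You propose to use the derived series, of length $d = \mathrm{dl}(G)$, as the nilpotent chain in Lemma~\ref{l3}, obtaining $|G| \leq F_{6d-2}(n(G))$, and then to ``substitute the explicit bound on $d$'' to arrive at $F_{23}((n(G)+1)^{149})$. But the subscript in $F_{6d-2}$ is the \emph{height} of the exponential tower. To land in $F_{23}(\cdot)$ you would need $6d-2 \leq 23$, i.e.\ $d \leq 4$, and the derived length of a solvable group is of course unbounded. Even if you succeed in showing $d$ is bounded by some function $h(n(G))$, the resulting bound $F_{6h(n(G))-2}(n(G))$ is a tower whose height grows with $n(G)$; no identity of the type $F_a(F_b(x)) = F_{a+b}(x)$ or $F_a(x^c) \leq F_{a+1}(x)$ can collapse a variable-height tower to a fixed-height one. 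So step~(5) cannot produce the claimed form.

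The paper's argument avoids this by not using the derived series at all. Instead it passes to $H = G/\Phi(G)$, views $V = F(G)/\Phi(G)$ as a completely reducible faithful $H/V$-module (Gasch\"utz), and applies Keller's structural theorem \cite[Theorem~2.1]{Keller1} to each irreducible summand $V_i$. That theorem yields, for each $i$, either a polynomial bound $|H_iV_i| \leq n(H_i,V_i)^{148}$ (via P\'alfy--Wolf), or a normal series $V_i \leq A_i \leq B_i$ with two abelian layers above $V_i$ and a very small permutation-group quotient $H_iV_i/B_i$. Globally this produces a normal subgroup $N \trianglelefteq G$ with a nilpotent chain of length at most~$4$ (namely $\Phi(G)$, then $V$, then two abelian layers), so Lemma~\ref{l3} gives $|N| \leq F_{22}(n(N))$ with a \emph{fixed} subscript, while $|G/N| \leq (n(G)+1)^{148}$. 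The exponent $149$ then comes from Lemma~\ref{newl1}: $n(N) \leq n(G)\,|G{:}N| < (n(G)+1)^{149}$. The entire point is that the chain length is an absolute constant~$4$, delivered by the structure theorem, not a quantity depending on $G$; your outline is missing exactly this ingredient.
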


\begin{proof}
Let $G$ be a finite solvable group. Let $\Phi(G)$ and $F(G)$ be the Frattini and Fitting subgroups of $G$ respectively. The group $F(G/\Phi(G)) = F(G)/\Phi(G)$ is a completely reducible and faithful $G/F(G)$-module (possibly of mixed characteristic) and $G/\Phi(G)$ splits over $F(G)/\Phi(G)$ by  a theorem of Gaschütz (see \cite[Theorem 1.12]{manz}).   

Put $H = G/\Phi(G)$ and $V = F(G)/\Phi(G)$. Let $R$ be a subgroup of $H$ such that $H =RV$ and $R \cap V = 1$. Let the completely reducible $H$-module $V$ have $t$ irreducible summands, $V_{1}, \ldots , V_{t}$. Let $H_{i} = H/C_{H}(V_{i})$ for every $i$ with $1 \leq i \leq t$. For each $i$, let $n(H_{i},V_{i})$ be the number of orbits of $H_{i}$ on $V_{i}$. 

Fix an index $i$ with $1 \leq i \leq t$. There are two cases to consider by a theorem of the second author \cite[Theorem 2.1]{Keller1}. 

In the first case, $|V_{i}| \leq n(H_{i},V_{i})^{37}$. Since $|H_{i}V_{i}| \leq |V_{i}|^{4}$, by the theorems of Pálfy \cite[Theorem 1]{Palfy} and Wolf \cite[Theorem 3.1]{Wolf}, we get $|H_{i}V_{i}| \leq  n(H_{i},V_{i})^{148}$. 

In the second case, there are normal subgroups $A_{i}$ and $B_{i}$ of $H_{i}V_{i}$ such that $V_{i} \leq A_{i} \leq B_{i} \leq H_{i}V_{i}$, the factor groups $A_{i}/V_{i}$ and $B_{i}/A_{i}$ are abelian and $H_{i}V_{i}/B_i$ may be considered as a permutation group of degree $k_{i}$ at most $(1/5) \log_{3} n(H_{i},V_{i})$. Since $H_{i}V_{i}/B_{i}$ is solvable, $|H_{i}V_{i}/B_{i}| \leq 24^{(k_{i}-1)/3}$ by \cite{Dixon}, and so $$|H_{i}V_{i}/B_{i}| < 24^{\log_{3}(n(H_{i},V_{i}))/15} < n(H_{i},V_{i}).$$ 

Observe that $n(H) \geq \max_{1 \leq i \leq t} \{ n(H_{i},V_{i}) \} - 1$ and $n(G) \geq n(H)$ by Lemma \ref{newl1}. 

Observe from the above that there is a normal subgroup $N$ in $G$ such that $N$ has a chain of nilpotent normal subgroups of length at most $4$, so that $|N| \leq F_{22}(n(N))$ by Lemma \ref{l3}, and $|G/N| \leq n(H_{i},V_{i})^{148} \leq (n(G)+1)^{148}$. Since $$n(N) \leq n(G) \cdot |G:N| < (n(G)+1)^{149}$$ by Lemma \ref{newl1}, we conclude that $$|G| = |N||G/N| \leq F_{22}(n(N))(n(G)+1)^{148} < F_{22}(  (n(G)+1)^{149}  )(n(G)+1)^{148},$$ which is less than $F_{23}(  (n(G)+1)^{149}  )$.
\end{proof}

We are now in the position to prove Theorem \ref{main2}. 

\begin{proof}[Proof of Theorem \ref{main2}]
Let $G$ be a finite group. Put $N = O_{\infty}(G)$. We have $|N| < F_{23}(  (n(N)+1)^{149}  )$ by Lemma \ref{lemmasolvable} and $|G/N| \leq   n(G/N)! (f_{1}(n(G/N)))^{2 n(G/N)}$ by Lemma \ref{lemmanonsol}. From these we may obtain an increasing function $g$ on the set of possible values of $n(G)$, by Lemma \ref{newl2}, such that $|G| \leq g(n(G))$. Let $g^{-1}$ denote the inverse function of $g$. This is also an increasing function. We have $n(G) \geq g^{-1}(|G|)$ (whenever $|G|$ is in the domain of $g^{-1}$). Let $f$ be the function defined on the set of natural numbers such that $f(x)$ is equal to $g^{-1}(y)$ where $y$ is the smallest member of the domain of $g^{-1}$ that is at least $x$. We have $n(G) \geq f(|G|)$. This completes the proof of the theorem.  
\end{proof}

\section{Groups with cyclic Sylow $p$-subgroups}

We begin the proof of Theorem \ref{main}.

Let $p$ be a prime and let $P$ be a Sylow $p$-subgroup of a finite group $G$. Suppose that $p \mid |G|$, that is $P \not= 1$. In this section we prove Theorem \ref{main} in case $P$ is cyclic and in case $Z(P)$ has an element of order $p^{2}$.

%From now on we turn to the proof of Theorem \ref{main}. In this section we prove it in case that the Sylow $p$-subgroups are cyclic. First we need the following general lemma.

\begin{lemma}\label{c}
    Let $G$ be a finite group and $p$ a prime dividing $|G|$. If $q$ is a prime with $q\geq p$ and $Q$ is a Sylow $q$-subgroup of $G$ such that $Z(Q)$ contains an element of order $q^2$, then $k_q(G)\geq q+1$. In particular, depending on whether $q=p$ or $q>p$,
    we obtain $k_p(G)>p$ or $k_{p'}(G)>p$ (respectively).
\end{lemma}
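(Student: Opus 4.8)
The plan is to exhibit $q+1$ distinct conjugacy classes of nontrivial $q$-elements, all detected inside a single cyclic subgroup of $Z(Q)$ of order $q^2$, by an elementary count of automorphisms of a cyclic $q$-group.

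First I would fix $z\in Z(Q)$ of order exactly $q^2$ (replacing a given element of order a higher power of $q$ by a suitable power of it), and put $W=\langle z\rangle\cong C_{q^2}$ and $H=N_G(W)$. Since $Q$ centralizes $z$, we have $Q\leq C_G(z)\leq H$, so $Q$ is a Sylow $q$-subgroup of both $C_G(z)$ and $H$; as $C_G(z)=C_H(W)$ is normal in $H=N_G(W)$, the quotient $\overline H:=H/C_G(z)$ has order prime to $q$. On the other hand $\overline H$ embeds into $\mathrm{Aut}(W)=\mathrm{Aut}(C_{q^2})$, whose Hall $q'$-subgroup is cyclic of order $q-1$; hence $m:=|\overline H|$ divides $q-1$.

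Next I would count orbits. Let $\mathcal E$ denote the set of generators of $W$, equivalently the $q(q-1)$ elements of $W$ of order $q^2$, on which $H$ acts through $\overline H$. A nonidentity automorphism of $C_{q^2}$ moves every generator, so $\overline H$ acts freely on $\mathcal E$ and there are $q(q-1)/m\geq q$ orbits. The crucial point is that each such $H$-orbit is already the full intersection of a $G$-class with $\mathcal E$: if $x,y\in\mathcal E$ and $x^g=y$ for some $g\in G$, then $\langle z\rangle=\langle x\rangle$ is carried by $g$ onto $\langle y\rangle=\langle z\rangle$, so $g\in N_G(W)=H$. Hence distinct $\overline H$-orbits on $\mathcal E$ lie in distinct $G$-classes, which gives at least $q$ conjugacy classes of elements of order $q^2$. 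Adjoining the class of $z^q$, which consists of elements of order $q$ and so coincides with none of the previous ones, we obtain $k_q(G)\geq q+1$. Finally, if $q=p$ this reads $k_p(G)\geq q+1>p$; and if $q>p$ then every $q$-element has order coprime to $p$, so $k_{p'}(G)\geq k_q(G)\geq q+1>p$.

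The only step requiring real care is the pair of observations that $\overline H$ is a $q'$-group and that $G$-fusion among the order-$q^2$ elements of $W$ is already realised inside $H$; both hinge on the simple fact that an element of order $q^2$ in $W$ generates $W$. Everything else is a routine count of automorphisms of a cyclic $q$-group; in particular no fusion machinery (not even Burnside's lemma) is needed, and the hypothesis $q\geq p$ enters only in the passage from $k_q(G)$ to $k_p(G)$ or $k_{p'}(G)$.
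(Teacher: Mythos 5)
Your proof is correct and uses the same underlying mechanism as the paper's: conjugacy among elements of the chosen cyclic $q$-subgroup in $Z(Q)$ is controlled by a normalizer-mod-centralizer that is a $q'$-group embedding in the automorphism group of a cyclic $q$-group, hence of order dividing $q-1$. The bookkeeping differs only slightly (you count the $\geq q$ free orbits on the generators of $W$ and then adjoin the class of $z^q$, while the paper bounds $|x^G\cap U|\leq q-1$ uniformly over orders $q$ and $q^2$ and divides $q^2-1$ by $q-1$), so this is essentially the paper's argument.
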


\begin{proof}
By the hypothesis, there exists a subgroup $U$ and a Sylow $q$-subgroup $Q$ of $G$
such that $U\leq Z(Q)$, $U$ is cyclic, and $|U|=q^2$. Now let $\mathcal{T}$ be the set of those nontrivial conjugacy classes of $G$ which have a non-empty intersection with $U$, that is, $\mathcal{T}$ consists of those conjugacy classes $x^G$ for which $1\neq x\in G$ and $x^G\cap U\neq \emptyset$.
Now let $K\in \mathcal{T}$. Then we can write $K=x^G$ for some $x$ in $U$, and the order of
$x$ is $q^k$ for some $k\in\{1,2\}$.

Now if $g \in G$ such that $x^g \in U$, then $g$ normalizes the subgroup $U_0=\langle x\rangle$ of $U$ since $U$ has a unique subgroup of order $q^k$. But since $Q\leq C_G(U)$, we know that $N_G(U_0)/C_G(U_0)$ is a $q'$-group whose order divides $|\mbox{Aut}(U_0)|\in\{q(q-1), q-1\}$, that is, $|N_G(U_0)/C_G(U_0)|$ divides $q-1$. This shows that at most $q-1$ elements of $K$ can be in $U$.

By this argument we see that each conjugacy class in $\mathcal{T}$ has at most $q-1$ elements in $U$, which implies that
\[k_q(G) \geq |T|\geq \frac{|U|-1}{q-1}=\frac{q^2-1}{q-1}=q+1,\]
as desired. The remainder of the statement of the lemma now follows immediately.
\end{proof}

\begin{theorem}
\label{Sylowp}
Let $G$ be a finite group and let $p$ be a prime dividing $|G|$. Let $P$ be a Sylow $p$-subgroup of $G$. Assume that $P$ is cyclic or $Z(P)$ has an element of order $p^{2}$. There exists a factorization $p-1 = ab$ with $a$ and $b$ positive integers such that $k_{p}(G) \geq a$ and $k_{p'}(G) \geq b$ with equalities in both cases if and only if $G=C_p \rtimes C_b$ such that $C_G(C_p) = C_p$.
\end{theorem}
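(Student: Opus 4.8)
The plan is to split into two cases according to whether $P$ is cyclic or $Z(P)$ contains an element of order $p^2$, and to reduce the first case to classical Brauer theory and the second to Lemma \ref{c}.

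\medskip

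\textbf{The case $Z(P)$ has an element of order $p^2$.} Here I would invoke Lemma \ref{c} with $q=p$: since $Z(P)$ contains an element of order $p^2$, we get $k_p(G)\geq p+1>p-1$. Now I need a complementary lower bound on $k_{p'}(G)$. The key point is that $k_{p'}(G)\geq 1$ always (the trivial class), so if I could show $k_{p'}(G)\geq 1$ suffices, I would take the factorization $p-1=(p-1)\cdot 1$, i.e.\ $a=p-1$, $b=1$. Then $k_p(G)\geq p+1>a$ and $k_{p'}(G)\geq 1=b$, so the inequalities hold \emph{strictly} in the first coordinate, hence equality in both cases is impossible --- which is consistent with the ``if and only if'' since $G=C_p\rtimes C_1=C_p$ has cyclic (indeed order $p$) Sylow $p$-subgroup, contradicting the assumption that $Z(P)$ has an element of order $p^2$. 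So in this subcase the theorem holds with the degenerate factorization and no equality case arises. I should double-check that $p>2$ is not an issue: if $p=2$ and $Z(P)$ has an element of order $4$, then $k_2(G)\geq 3>1=p-1$ and again $b=1$, $a=1$ works with strict inequality.

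\medskip

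\textbf{The case $P$ cyclic.} This is the substantive case. Since $P$ is cyclic, $G$ has a Sylow $p$-subgroup whose normalizer controls $p$-fusion, and Brauer's theory of blocks with cyclic defect group applies; more elementarily, I would use the statement cited in the introduction after Theorem \ref{main}: when $|P|=p$ the inequalities $k_p(G)\geq a$ and $k_{p'}(G)\geq b$ for a suitable factorization $p-1=ab$ follow from Brauer's work (\cite[Theorem 11.1]{Nav}), using that $k_{p'}(G)$ equals the number of irreducible $p$-Brauer characters. For general cyclic $P$ of order $p^n$, I would pass to the quotient or argue directly: let $e=|N_G(P)/C_G(P)|$, an integer dividing $p-1$ (since $\mathrm{Aut}(P)$ for cyclic $P$ of $p$-power order has a unique subgroup of order dividing $p-1$, namely the $p'$-part), and set $a=e$, $b=(p-1)/e$. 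The orbits of $N_G(P)/C_G(P)$ on the nontrivial elements of the unique subgroup of order $p$ in $P$ (by a Burnside-type fusion argument, two elements of $P$ of order $p$ are $G$-conjugate iff they are $N_G(P)$-conjugate) number $(p-1)/e=b$ of size $e$; counting conjugacy classes of $p$-elements that meet this subgroup already gives $k_p(G)\geq b$... but I actually want $k_p(G)\geq a=e$. Here I would instead count along the cyclic chain: the subgroups $\langle x\rangle$ for $x$ ranging over a set of representatives give at least $e$ classes when... Let me reconsider: the cleaner route is $k_p(G)\geq a$ where $a$ is chosen so that Brauer's permutation-of-characters and the structure of the Brauer tree force exactly these counts; specifically by \cite[Theorem 11.1]{Nav} the number of irreducible Brauer characters in a block with cyclic defect is $e$ and the number of ordinary characters is $e+(p^n-1)/e$ or similar, from which $k_{p'}(G)\geq e$ and a matching bound on $p$-classes emerges. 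I would set $\{a,b\}=\{e,(p-1)/e\}$ with $k_p(G)\geq$ the appropriate one and $k_{p'}(G)\geq$ the other.

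\medskip

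\textbf{The equality case.} Suppose equality holds: $k_p(G)=a$ and $k_{p'}(G)=b$ with $ab=p-1$. I expect this forces $|P|=p$ (otherwise the cyclic chain $1<\langle x^{p^{n-1}}\rangle<\cdots<P$ produces strictly more $p$-classes), and then the Brauer-tree/fusion analysis forces $N_G(P)/C_G(P)$ to act with exactly the minimal number of orbits, $C_G(P)=P\times(\text{$p'$-part})$, and ultimately $k_{p'}(G)=b$ pins down the $p'$-structure so tightly that $G$ has a normal $p$-complement $K$ with $K$ acting... no: rather, $O_{p'}(G)=1$ (else classes in $O_{p'}(G)$ inflate $k_{p'}$), $G=P\rtimes H$ with $H$ cyclic of order $b$ acting faithfully on $P=C_p$, i.e.\ $G=C_p\rtimes C_b$ with $C_G(C_p)=C_p$; conversely for such $G$ one checks directly $k_p(G)=a=(p-1)/b$ and $k_{p'}(G)=b$. \textbf{The main obstacle} I anticipate is making the general cyclic-$P$ (not just order $p$) argument airtight --- in particular showing that when $|P|>p$ at least one of the two inequalities is strict, so that the equality case genuinely reduces to $|P|=p$ --- and correctly bookkeeping which of $a,b$ bounds $k_p$ versus $k_{p'}$ so that the factorization produced is consistent with the ``if and only if'' clause.
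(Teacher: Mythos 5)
There is a genuine gap, and it traces to a small but decisive observation the paper makes and you miss: the right dichotomy is not ``$P$ cyclic'' vs.\ ``$Z(P)$ contains an element of order $p^2$'', but ``$|P|\geq p^2$'' vs.\ ``$|P|=p$''. If $P$ is cyclic of order at least $p^2$, then $Z(P)=P$ already contains an element of order $p^2$, so Lemma~\ref{c} applies exactly as in your first case and yields $k_p(G)\geq p+1$, giving the degenerate factorization $a=p-1$, $b=1$ with no equality. Once you see this, the only case requiring Brauer theory is $|P|=p$ (plus the trivial $p=2$ check), which is the case you correctly flag as ``the statement cited in the introduction.'' Your attempt to run cyclic-defect Brauer theory for general $p^n$ is exactly the part you acknowledge you cannot make airtight --- and indeed you never need it, because the $|P|\geq p^2$ case has already been dispatched without any block theory. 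The paper's argument for $|P|=p$ is: set $b=|N_G(P)/C_G(P)|$; by Sylow fusion $k_p(G)\geq (p-1)/b=:a$; and $k_{p'}(G)$ equals the number of irreducible Brauer characters, which is at least the number in the principal block, which is $b$ by \cite[Theorem~11.1~(c)]{Nav}. Your proposal gets the roles of $a$ and $b$ confused at this point (you notice this yourself), and the paper's assignment is the clean one.

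The equality case is also missing a key ingredient. Your sketch (force $O_{p'}(G)=1$, then some normal-complement reasoning) is not completed and does not obviously close. The paper's route is sharper: if $k_{p'}(G)=b$ exactly, then since each $p$-block contributes at least one Brauer character and the principal block already contributes $b$, the group $G$ must have a \emph{unique} $p$-block. By Harris's theorem \cite[Theorem~1~(a)]{Harris} (for $p$ odd), this forces $F^*(G)=O_p(G)$; since $|P|=p$ and $O_p(G)\ne 1$, the subgroup $O_p(G)$ is the Sylow $p$-subgroup and is self-centralizing, so $G\cong C_p\rtimes C_b$ with $C_G(C_p)=C_p$. Without the unique-block/Harris step, your equality analysis does not pin down the structure of $G$.
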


\begin{proof}
Let $G$ be a finite group with a Sylow $p$-subgroup $P$ which is cyclic or that $Z(P)$ contains an element of order $p^{2}$. If $|P|\geq p^2$, then we can
apply Lemma \ref{c}, with $q=p$ and thus, obtain that $k_p(G)>p$. Hence we can take $a=p-1$ and $b=1$, which proves the theorem with $k_p(G)>a$ and $k_{p'}(G)\geq b$. In particular,
there is no case of equality here.

Let us assume that $|P|=p$. If $p=2$, then $k_{2}(G) \geq 1$ and $k_{2'}(G) \geq 1$ with equality in both cases if and only if $G = P$. Let $p$ be odd. Let $C$ be the centralizer and $N$ the normalizer of $P$ in $G$. Let $b = |N/C|$. We have $k_{p}(G) \geq (p-1)/b$ by Sylow's theorem. The number $k_{p'}(G)$ is equal to the number of irreducible Brauer characters in $G$. This number is at least the number of irreducible Brauer characters in the principal block $B_{0}$, which in turn is equal to $b$ by \cite[Theorem 11.1 (c)]{Nav}. This proves $k_{p}(G) \geq a$ and $k_{p'}(G) \geq b$ where $a = (p-1)/b$. It remains to describe all possibilities when there are equalities in both cases. Let $k_{p}(G) = a$ and $k_{p'}(G) = b$. We certainly have $k_{p}(G) \geq m$ and $k_{p'}(G) \geq n$ for some factorization $p-1 = mn$ by considering the principal block of $G$ as before. This forces $a = m$ and $b = n$. If $G$ has more than one $p$-block, then there are at least $b+1$ irreducible Brauer characters in $G$, which is a contradiction. Let $G$ have a unique $p$-block. For $p$ odd, this happens, by \cite[Theorem 1 (a)]{Harris}, if and only if the generalized Fitting subgroup of $G$ is $O_{p}(G)$. In our situation $O_{p}(G)$ (which is self-centralizing) is the Sylow $p$-subgroup (of order $p$) of $G$. This implies that $G \cong C_{p} \rtimes C_{b}$. This is the group mentioned in the statement of the theorem.
\end{proof}

\section{Three lemmas}

In this section we collect three lemmas. 

\begin{lemma}
\label{nonsol}	
In order to prove Theorem \ref{main} for a prime $p$ and a nonsolvable finite group $G$, we may assume that $k_{p}(G) \geq 2$, $k_{p'}(G) \geq 3$ and $p \geq 7$.	
\end{lemma}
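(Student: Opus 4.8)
The statement is a reduction lemma: we want to show that when proving Theorem~\ref{main} for a nonsolvable group $G$ and a prime $p$, we lose no generality in assuming $k_p(G) \ge 2$, $k_{p'}(G) \ge 3$, and $p \ge 7$. The strategy is to dispose of the complementary cases one at a time, showing in each that the conclusion of Theorem~\ref{main} either already holds or reduces to a situation handled by an earlier result. For the case $k_p(G) = 1$ --- i.e.\ all nontrivial $p$-elements of $G$ are conjugate --- I would invoke the theorem of Hung, Sambale and Tiep quoted in the introduction: one gets $k_{p'}(G) \ge p$ (so take the factorization $p-1 = (p-1)\cdot 1$, giving $k_p(G) = 1 = b'$... wait, here $b = p-1$, $a = 1$, so $k_p(G) \ge 1$ and $k_{p'}(G) \ge p - 1$, with the claimed equality case $G = C_p \rtimes C_{p-1}$ which is solvable, hence excluded), or $k_{p'}(G) = p-1$ with $G$ the Frobenius group $C_p \rtimes C_{p-1}$ (again solvable, excluded, and anyway consistent with (i)), or the exceptional case $p = 11$, $G = C_{11}^2 \rtimes \mathrm{SL}(2,5)$, which is exactly alternative (ii). So if $G$ is nonsolvable and $k_p(G) = 1$, Theorem~\ref{main} holds. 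Hence we may assume $k_p(G) \ge 2$.

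**Reducing to $k_{p'}(G) \ge 3$.** Next, suppose $k_{p'}(G) \le 2$. Since $p \mid |G|$, the principal block alone forces $k_{p'}(G) \ge 1$ always, and $k_{p'}(G) = 1$ would force $G$ to be a nontrivial $p$-group (as the trivial character is the only $p'$-class-count contributor only when $O^p(G)$ is trivial --- more carefully, $k_{p'}(G) = 1$ means the identity is the only $p$-regular class, i.e.\ $G$ is a $p$-group), contradicting nonsolvability. If $k_{p'}(G) = 2$, then by the Moret\'o--Nguyen or Guralnick-type bounds cited in the introduction (or more directly: the number of $p'$-classes equals the number of irreducible $p$-Brauer characters, and $G/O^{p'}(O_\infty(G))$ is $k_{p'}(G)$-bounded), $G$ is severely restricted; the cleanest route is that groups with very few $p$-regular classes have been classified and are all solvable (a group with $k_{p'}(G) = 2$ has a normal $p$-complement or is close to it), so this case cannot arise for nonsolvable $G$. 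I would cite the appropriate classification of groups with $k_{p'}(G) \le 2$ --- these are solvable --- to rule it out. Thus we may assume $k_{p'}(G) \ge 3$.

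**Reducing to $p \ge 7$.** Finally, suppose $p \in \{2, 3, 5\}$. For these small primes, $p - 1 \in \{1, 2, 4\}$, so the only factorizations $p - 1 = ab$ give very small values of $a$ and $b$; in fact one always has $a \le 2$ or the trivial factorization, and since we are now assuming $k_p(G) \ge 2$ and $k_{p'}(G) \ge 3$, I would check directly that for $p = 2, 3, 5$ the inequalities $k_p(G) \ge a$, $k_{p'}(G) \ge b$ hold for the best factorization with room to spare (e.g.\ for $p = 5$ take $a = b = 2$: then $k_p(G) \ge 2$ is our standing assumption and $k_{p'}(G) \ge 3 > 2 = b$, so the inequalities are strict and no equality case arises; for $p = 2, 3$ similarly). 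Since the inequalities are strict there is no equality case, so alternative (i) of Theorem~\ref{main} holds and there is nothing further to prove for these primes. Hence in the remaining analysis we may assume $p \ge 7$.

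**Main obstacle.** The delicate point is the $k_{p'}(G) = 2$ case: one must be sure that no nonsolvable group has exactly two classes of $p$-regular elements. This should follow from the Ito--Michler theorem circle or from the known classification of groups with a small number of $p$-Brauer characters (a group with two $p$-Brauer characters has a normal $p$-complement by a Brauer--Nesbitt/Fong-type argument, forcing solvability when combined with the structure of the $p$-complement being forced to be cyclic or small), but it is the one step where I would need to locate and cite the precise reference rather than argue from scratch; everything else is a direct appeal to the Hung--Sambale--Tiep theorem and elementary bookkeeping on the factorizations of $p - 1$ for $p \le 5$.
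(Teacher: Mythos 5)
Your reductions for $k_p(G)\geq 2$ (via the Hung--Sambale--Tiep theorem, noting the two non-exceptional cases there are solvable) and for $p\geq 7$ (elementary bookkeeping on factorizations of $p-1\leq 4$ once $k_p(G)\geq 2$ and $k_{p'}(G)\geq 3$ are in hand) match the paper's argument. The gap is in the middle step, and you correctly flag it as your main obstacle: you do not actually prove that a nonsolvable group must have $k_{p'}(G)\geq 3$, only sketch a detour through Brauer characters, normal $p$-complements, and a classification you say you would need to locate. The paper's route is a one-liner and you should have found it: by Burnside's $p^aq^b$ theorem, a nonsolvable group has at least three distinct prime divisors; since $p$ is one of them, there are at least two primes $q,r\neq p$ dividing $|G|$, and elements of orders $q$ and $r$ yield two distinct nontrivial conjugacy classes of $p'$-elements, which together with the identity class gives $k_{p'}(G)\geq 3$. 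This is what the paper means by ``by Burnside's theorem.'' Your Brauer-theoretic detour is not only unnecessary but incomplete as written, since you never pin down the assertion that $k_{p'}(G)=2$ forces solvability; the Burnside argument renders the whole discussion moot.
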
	

\begin{proof}
We may assume that $k_{p}(G) \geq 2$ by \cite[Theorem 1.1]{HungSambaleTiep} and that $k_{p'}(G) \geq 3$ by Burnside's theorem. It follows that we may take $p$ to be at least $7$. 		
\end{proof}

The following lemma is \cite[Lemma 7.1]{HM}.  

\begin{lemma}
\label{l1}	
Let $p$ be a prime. Let $N$ be a normal subgroup of a finite group $G$. We have $k_{p'}(G/N) \leq k_{p'}(G)$ and $k_{p}(G/N) \leq k_{p}(G)$.
\end{lemma}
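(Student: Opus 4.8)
The plan is to prove both inequalities at once by exhibiting, in each case, a \emph{surjection} from a set of conjugacy classes of $G$ onto the corresponding set of conjugacy classes of $G/N$; since a surjection between finite sets forces the domain to be at least as large as the codomain, the inequalities $k_{p'}(G/N)\le k_{p'}(G)$ and $k_{p}(G/N)\le k_{p}(G)$ follow immediately. The map in question is the obvious one, $x^{G}\mapsto (xN)^{G/N}$, which is well defined because $g$-conjugate elements of $G$ have $gN$-conjugate images in $G/N$; the only real content is showing that it is onto.

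For surjectivity I would use the $p$-part/$p'$-part decomposition of a group element. Given $x\in G$ of order $p^{a}m$ with $\gcd(m,p)=1$, pick integers $u,v$ with $up^{a}+vm=1$ and set $x_{p}=x^{vm}$, $x_{p'}=x^{up^{a}}$, so that $x=x_{p}x_{p'}$ with $x_{p}$ a $p$-element, $x_{p'}$ a $p'$-element, and both of them powers of $x$. The key observation is a lifting statement. If $xN$ is a $p'$-element of $G/N$, then $x_{p}N=(xN)^{vm}$ is at once a $p$-element of $G/N$ (being the image of the $p$-element $x_{p}$) and a power of the $p'$-element $xN$, hence trivial; thus $x_{p}\in N$ and $xN=x_{p'}N$, so the coset $xN$ is the image of the $p'$-element $x_{p'}\in G$. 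Symmetrically, if $xN$ is a nontrivial $p$-element of $G/N$, then $x_{p'}N$ is both a $p'$-element and a power of $xN$, hence trivial, so $x_{p'}\in N$ and $xN=x_{p}N$ is the image of the $p$-element $x_{p}\in G$, which is nontrivial since $x_{p}\notin N$.

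Putting these together: the map $x^{G}\mapsto(xN)^{G/N}$ restricts to a surjection from the conjugacy classes of $p'$-elements of $G$ onto those of $G/N$, and likewise from the conjugacy classes of nontrivial $p$-elements of $G$ onto those of $G/N$; counting then yields both inequalities. There is essentially no obstacle here — the only step needing any care is the lifting argument, i.e.\ that an element of $G/N$ of $p'$-order (respectively of $p$-power order) always admits a preimage in $G$ of the same type, which is exactly what the $p$-part/$p'$-part decomposition supplies. (This is precisely why the analogous monotonicity would fail for, say, the count of classes of elements of a fixed order $n$, rather than for the ``$p$-local'' conditions considered here.)
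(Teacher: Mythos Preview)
Your proof is correct; the surjection argument via the $p$-part/$p'$-part decomposition is exactly the standard way to lift $p'$- and $p$-elements through a quotient, and your handling of the ``nontrivial'' condition for $k_p$ is clean. The paper itself does not prove this lemma but simply quotes it as \cite[Lemma~7.1]{HM}, so there is no in-paper argument to compare against; your write-up is essentially the proof one would expect to find behind that citation.
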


The following lemma will also be useful throughout the paper.

\begin{lemma}
\label{??}	
Let $p$ be a prime. Let $H$ be a finite group of order not divisible by $p$, and let $V$ be a finite $H$-module over the field with $p$
elements. (We do not require $V$ to be faithful or irreducible.) Write $HV$ for the semidirect product of $H$ and $V$ with respect 
to the action of $H$ on $V$. Then $k_{p'}(HV) =k(H)$. 
\end{lemma}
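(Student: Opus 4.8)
The plan is to establish a bijection between the set of conjugacy classes of $p$-regular elements of $HV$ and the set of conjugacy classes of $H$. First I would observe that since $|H|$ is coprime to $p$ and $V$ is a $p$-group, every element $hv$ of $HV$ (with $h \in H$, $v \in V$) has a canonical factorization into its $p$-part and $p'$-part: writing $n = |V|$ and choosing $m$ with $m \equiv 1 \pmod{|H|}$ and $m \equiv 0 \pmod n$ (so that by CRT such $m$ exists since $\gcd(|H|, n) = 1$), one checks that $(hv)^m$ lies in $V$ and is the $p$-part, while $(hv)^{1-m}$ projects to $h$ and is the $p'$-part. Hence $hv$ is $p$-regular if and only if its $V$-component (in the appropriate sense, i.e.\ after conjugating into a complement) is trivial; more precisely, an element is $p$-regular precisely when it is conjugate to an element of $H$. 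Indeed, $hv$ is $p$-regular iff its $p$-part is trivial iff $(hv)^m = 1$; and a short computation shows $(hv)^m = v \cdot {}^h v \cdot {}^{h^2} v \cdots = (\sum_{i} {}^{h^i} v)$-type expression (using additive notation in $V$), which one can also write as $v$ acted on by $\sum_{i=0}^{|H|-1}\cdots$; the point is that $hv$ is conjugate in $HV$ to $h$ itself exactly when this obstruction vanishes, and by the coprimeness (Fitting's lemma / Maschke applied to $\langle h\rangle$ acting on $V$) one gets that $hv \sim_{HV} h$ for \emph{every} $v$, because the map $w \mapsto w - {}^h w$ on the fixed-free part is bijective so $v$ can be "absorbed."

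Granting that every $p$-regular element of $HV$ is $HV$-conjugate to an element of $H$, the next step is to count: I would define a map from $\{p\text{-regular classes of } HV\}$ to $\{\text{classes of } H\}$ by sending the class of $h \in H$ (viewed in $HV$) to the class $h^H$. This is well-defined and surjective by the previous paragraph. For injectivity, suppose $h_1, h_2 \in H$ are conjugate in $HV$, say $h_2 = {}^{xv} h_1$ for some $x \in H$, $v \in V$; projecting $HV \to H$ (killing $V$) shows $h_2 = {}^x h_1$, so $h_1, h_2$ are already conjugate in $H$. Thus the map is a bijection, giving $k_{p'}(HV) = k(H)$.

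The only genuinely delicate point — and the step I expect to be the main obstacle — is the claim that $hv$ is $HV$-conjugate to $h$ for every $v \in V$, i.e.\ that the $V$-coset of a $p$-regular-type element collapses to a single class. This is where coprimeness is essential. The clean way is: $\langle h \rangle$ acts on $V$ coprimely (since $p \nmid |H|$ kills any issue, and $|h|$ divides $|H|$), so $V = C_V(h) \oplus [V,h]$, and on $[V,h]$ the operator $1 - h$ is invertible; writing $v = v_0 + v_1$ accordingly, one has ${}^{w}(hv) = h \cdot (v + w - {}^h w) = h\cdot(v_0 + (v_1 + w - {}^h w))$ for $w \in V$, and choosing $w \in [V,h]$ with $w - {}^h w = -v_1$ conjugates $hv$ to $hv_0$. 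It remains to see $hv_0$ with $v_0 \in C_V(h)$ is $p$-regular and, in fact, to handle whether $v_0$ can itself be removed — but $hv_0$ generates an abelian group $\langle h \rangle \times \langle v_0 \rangle$ only if $v_0 \neq 0$ makes it non-$p$-regular, so actually one must be careful: $hv_0$ is $p$-regular iff $v_0 = 0$. So the correct statement is that $hv$ is $p$-regular iff $v_0 = 0$ (its $C_V(h)$-component vanishes), and then the conjugation above shows each such class meets $H$, while the $v$ with $v_0 \ne 0$ are not $p$-regular. Reconciling this with the counting still yields $k_{p'}(HV) = k(H)$ because the $p$-regular classes are exactly $\{(hv)^{HV} : h \in H,\ v \in [V,h]\}$ and each such equals $h^{HV}$, and distinct $H$-classes stay distinct by the projection argument. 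I would present this carefully, as it is the crux.
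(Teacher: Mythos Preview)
Your proposal is correct and follows essentially the same strategy as the paper: both arguments reduce to showing that every $p'$-element $hv$ of $HV$ is conjugate (by an element of $V$) to $h\in H$, and both hinge on the coprime decomposition $V=C_V(h)\oplus [V,h]$. The only cosmetic difference is packaging---the paper encodes the $C_V(h)$-component of $v$ via the explicit norm map $\phi(x)=x\,x^g\cdots x^{g^{m-1}}$ and shows $\ker\phi=[g,V]$, whereas you first conjugate $hv$ to $hv_0$ with $v_0\in C_V(h)$ and then observe directly that $hv_0$ is $p$-regular iff $v_0=0$; note also that what you call ``injectivity'' (the projection argument) is really the well-definedness of your map $h^{HV}\mapsto h^H$, while injectivity itself is trivial.
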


\begin{proof}
It is easy to see that  $k_{p'}(HV) \geq k(H)$. To show that  $k_{p'}(HV) \leq k(H)$, it suffices to show that every $p'$-element 
of $HV$ is conjugate in $HV$ to some element in $H$. To do so, it suffices to show that if $g\in H$ and
$v\in V$ such that $gv$ is a $p'$-element in $HV$, then there exists a $w\in V$ such that $(gv)^w=g$, that is, $v=w^gw^{-1}\ \ (1)$
(where we view $V$ as a normal subgroup of $HV$ and write its operations multiplicatively). Now note that for $[g, V]$, which is
the subgroup generated by the elements $[g, x]$ for all $x\in V$, we actually have $[g, V]=\{ [g,x]\ |\ x\in V\}\ \ (2)$, and we also have
$w^gw^{-1}=w^{-1}w^g=[w,g]=[g,w]^{-1}\in [g, V]\  \  (3)$.
Now define the map $\phi: V\to V$ by $\phi(x)=xx^gx^{g^2}\dots x^{g^{m-1}}$, where $m$ is the order of $g$.
Observe that $\phi$ is a homomorphism such that $\phi(V)\leq C_V(g)$. Write $W$ for the kernel of $\phi$. Now consider
the map $\alpha: C_V(g)\to C_V(g)$ defined simply as the restriction of $\phi$ to $C_V(g)$. Then $\alpha(x)=x^m$, and since
$p$ does not divide $m$, we see that the kernel of $\alpha$ is trivial. Hence $\alpha$ is injective and thus also surjective. This
shows that $\phi(V) = C_V(g)$ and hence $\dim W =\dim V -\dim  C_V(g)$.  Since by coprime action we also have the
well-known decomposition $V=[g,V] \times C_V(g)$, we obtain that $\dim W=\dim [g,V]$. Moreover, since for $x\in V$ we have
$\phi([g,x])=\phi(v^{-g}v)=\phi(v^{-g})\phi(v)=1$, it follows that $[g,V]\leq W$. Hence altogether $[g,V]=W\ \ (4)$. \\
Recall that we want to find a $w\in V$ satisfying $(1)$, But by $(2), (3), (4)$ all we have to do is to show that $v\in W$.
Now $(gv)^m=g^mvv^g\dots v^{g^{m-1}}=\phi(v)\in V$ (since $g^m=1$). As $p$ does not divide the order of $gv$, this
forces $(gv)^m=\phi(v)=1$. Hence $v\in W$ and the proof is complete.
\end{proof}

\section{Non-$p$-solvable groups}

In this section, we prove Theorem \ref{main} in case $G$ is not a $p$-solvable group. 

We first deal with almost simple groups.

\begin{lemma}
\label{almostsimple}
Let $G$ be an almost simple group with socle $S$. Let $p$ be a prime divisor of the order of $S$. There exists a factorization $p-1 = ab$ with $a$ and $b$ positive integers such that $k_{p}(G) \geq a$ and $k_{p'}(G) \geq b$. Equalities in both inequalities cannot occur at the same time.
\end{lemma}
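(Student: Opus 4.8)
The plan is to reduce to a case analysis over the finite simple groups, using the classification, and for each family exhibit enough Aut-orbits of nontrivial $p$-elements (to bound $k_p(G)$ from below) and enough $p'$-classes (to bound $k_{p'}(G)$ from below). Since $G$ is almost simple with socle $S$ and $p\mid|S|$, every conjugacy class of $G$ inside $S$ is a union of $\mathrm{Aut}(S)$-orbits on $S$; in particular $k_p(G)\ge$ (number of $\mathrm{Aut}(S)$-orbits on nontrivial $p$-elements of $S$) is false in general, but $k_p(G)$ is at least the number of $G$-classes, and more usefully $k_p(G)$ and $k_{p'}(G)$ are each bounded below by the corresponding counts for any normal subgroup quotient via Lemma \ref{l1}. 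The cleanest route is: set $a$ and $b$ to be whatever factorization of $p-1$ we can certify, and note that we only need \emph{some} factorization, so it suffices to prove $k_p(G)\cdot k_{p'}(G) > p-1$ whenever we cannot already write $p-1 = ab$ with $k_p(G)\ge a$, $k_{p'}(G)\ge b$ \emph{and} both inequalities strict is impossible to salvage --- i.e. it suffices to show the product $k_p(G)\,k_{p'}(G)$ is strictly larger than $p-1$, OR that one of the two counts already meets a full factor while the other strictly exceeds its complement. Concretely I would aim for the strong statement $k_p(G)\cdot k_{p'}(G) \ge p$ for all almost simple $G$; this immediately gives both: pick any factorization $p-1=ab$ with $a\le k_p(G)$ maximal, then $k_{p'}(G)\ge p/a > (p-1)/a = b$, so equality fails.

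**Key steps.** First, handle the abelian-Sylow / cyclic-Sylow situations via Theorem \ref{Sylowp} and Lemma \ref{c}: if $Z(P)$ has an element of order $p^2$ then $k_p(G)\ge p+1 > p-1$ already, done. So assume $P$ has exponent $p$ at its center, and in particular we may use that $S$ has many classes of elements of order $p$. Second, use known lower bounds for $k_{p'}(S)$: by work on the number of $p'$-classes of simple groups (e.g. $k_{p'}(S)$ grows with $|S|$, and is at least $2\sqrt{p-1}$-type bounds are available; more elementarily $k_{p'}(S)\ge$ the number of $p'$-element orders), combined with $k_{p'}(G)\ge k_{p'}(S)/|G:S| \cdot(\text{fusion correction})$ --- actually more simply $k_{p'}(G) \ge k_{p'}(G/1)$ and we can pass to $S$ only with care about fusion, so instead bound $k_{p'}(G)$ directly using that $G/S$ is cyclic (solvable), hence $k_{p'}(G)\ge k_{p'}(S^G\text{-classes})$. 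Third, split by family: for $S$ of Lie type in defining characteristic $p$, count unipotent classes (there are many: at least rank-many) and semisimple $p'$-classes (at least $\sim q^{\mathrm{rank}}$ of them), so the product vastly exceeds $p-1 = q\cdot\text{(something)}$ when $p=q$; for cross-characteristic and alternating/sporadic, use explicit bounds. Fourth, deal with the genuinely tight small cases ($\mathrm{PSL}(2,q)$, $\mathrm{Sz}(q)$, etc., and small $p$) by hand, using character-table data, to confirm the product strictly exceeds $p-1$ there too --- this is where one would check that $\mathrm{PSL}(2,q)$ with $p\mid q\pm 1$ gives $k_p(G)$ roughly $(p-1)/2$ or so and $k_{p'}(G)$ comfortably more than $2$, so the product clears $p-1$.

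**Main obstacle.** The hard part will be the rank-$1$ groups of Lie type in cross characteristic, especially $\mathrm{PSL}(2,q)$, $\mathrm{PSL}(3,q)$, $\mathrm{PSU}(3,q)$ and the Suzuki/Ree families, where both $k_p$ and $k_{p'}$ can be small (on the order of $\sqrt{p}$ or a bounded multiple of a torus-quotient), so the product $k_p(G)k_{p'}(G)$ is only barely above $p-1$ and one must be careful that strict inequality holds --- precisely the phenomenon that makes the ``$p=11$, $G=C_{11}^2\rtimes\mathrm{SL}(2,5)$'' exception necessary in Theorem \ref{main} (though that group is not almost simple, so here we expect no exception, but the margins are thin). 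Handling these requires honest use of the generic character tables (Deligne--Lusztig theory / Jordan decomposition) to count $p$-element and $p'$-element classes precisely, rather than crude estimates. A secondary nuisance is correctly relating $k_p(G)$, $k_{p'}(G)$ to the socle counts when $|G/S|>1$: diagonal/field/graph automorphisms can fuse $S$-classes, so one should either work with $G$-classes throughout (losing a factor of $|G:S|$ one must then recover) or use that $G/S$ is cyclic to argue fusion is limited; I would phrase everything in terms of $G$-orbits on $p$-elements of $S$ and cite the structure of $\mathrm{Out}(S)$ to bound the fusion, which keeps the bookkeeping honest.
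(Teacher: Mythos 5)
Your reduction to the claim ``$k_p(G)\cdot k_{p'}(G)\ge p$ suffices'' is logically broken. You argue: take $a$ the largest divisor of $p-1$ with $a\le k_p(G)$, set $b=(p-1)/a$; then ``$k_{p'}(G)\ge p/a > (p-1)/a = b$.'' But the product bound only gives $k_{p'}(G)\ge p/k_p(G)$, not $p/a$, and when $k_p(G)>a$ (which happens whenever $k_p(G)$ is not itself a divisor of $p-1$) one has $p/k_p(G) < p/a$, so the chain collapses. Concretely, take $p=31$ (so $p-1=30$) and suppose some group had $k_p(G)=8$ and $k_{p'}(G)=4$: the product is $32\ge p$, yet no factorization $30=ab$ satisfies $k_p\ge a$ and $k_{p'}\ge b$ (the choices $a\in\{6,5,3,2,1\}$ all force $b\ge 5>4$, and $a=10,15,30$ exceed $k_p$). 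So the product criterion you propose to establish is neither necessary nor sufficient for the lemma, and the entire plan built on it does not close.

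Beyond the logical gap, the route is also far heavier than what the paper actually does. The paper sidesteps almost all case analysis by invoking a single strong external input: if the Sylow $p$-subgroup of $G$ is non-cyclic and $p\nmid|G/S|$, then $k_{p'}(G)\ge p$ by \cite[Theorem 6.2]{HungSambaleTiep}, so $(a,b)=(1,p-1)$ works with strict inequality. The cyclic Sylow case is Theorem \ref{Sylowp}. What remains is the rare situation where $p\mid |G/S|$ and the Sylow $p$-subgroup of $G/S\le\Out(S)$ is non-cyclic; using Lemma \ref{l1} one passes to $G/S$, where $k_p(G)\ge k_p(G/S)+1$ buys the needed strict inequality, and inspection of $\Out(S)$ shows this can only occur for $\PSL$ and $\PSU$ with $p\mid(r+1,q\mp 1)$, forcing $r\ge p-1$ and producing $k_{p'}(G)\ge p-1$ directly. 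No Deligne--Lusztig counting, no per-family estimates of unipotent or semisimple classes, no special treatment of rank-one groups. Your concern about fusion under $\Out(S)$ is real but the paper circumvents it entirely by working with $G/S$ via Lemma \ref{l1} rather than trying to relate $k_{p'}(G)$ to $k_{p'}(S)$ class-by-class. If you want to salvage your approach you would need to replace the product criterion with an argument that directly produces a valid factorization in each family, at which point you have essentially abandoned the shortcut that motivated the approach.
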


\begin{proof}
Let $P$ be a Sylow $p$-subgroup of $G$. If $P$ is cyclic, then the result follows from Theorem \ref{Sylowp}. Assume that $P$ is not cyclic. If $p$ does not divide $|G/S|$, then $k_{p'}(G) \geq p$ by \cite[Theorem 6.2]{HungSambaleTiep}. Assume also that $p$ divides $|G/S|$. Let $k_{p}(G) \geq 2$, $k_{p'}(G) \geq 3$ and $p \geq 7$. This assumption can be made by Lemma \ref{nonsol}.

Since $k_{p'}(G) \geq k_{p'}(G/S)$ and $k_{p}(G) \geq k_{p}(G/S) +1$ by Lemma \ref{l1} and the fact that $p$ divides $|S|$, it would be sufficient to show that Theorem \ref{main} is true for the group $G/S$. The factor group $H = G/S$ is a subgroup of $\mathrm{Out}(S)$. Since $p \geq 7$, the group $S$ must be a simple group of Lie type. Let $Q$ be a Sylow $p$-subgroup of $H$. If $Q$ is cyclic, then the result follows from Theorem \ref{Sylowp}. Assume that $Q$ is not cyclic. Since $p \geq 7$, by inspecting the structure of $\mathrm{Out}(S)$ (see \cite[Theorem 2.5.12]{GLS}) it remains to deal with the cases where  $S$ is a projective special linear group or a projective special unitary group.

Let $q=\ell^{f}$ where $\ell$ is the defining characteristic of $S$ and $f$ is a positive integer. Assume first that the rank $r$ of $S$ is at least $2$. For a projective special linear group $S$, we have $\mathrm{Out}(S)=C_{(r+1,q-1)}\rtimes (C_f\times C_2)$ and  for a projective special unitary group $S$, we have $\mathrm{Out}(S)=C_{(r+1,q+1)}\rtimes C_{2f}$ (see \cite[Theorem 2.5.12]{GLS} and the discussion following the proof). Since $p\geq 7$ and $Q$ is not cyclic, $p$ must divide $(r+1,q-1)$ and $(r+1,q+1)$, respectively. Thus, $r\geq p-1\geq 6$ and $q\geq 7$ in both cases. With these restrictions one checks that $k_{p'}(G)> k_{p'}(S)/|\mathrm{Out}(S)|\geq r\geq p-1$ using Theorem 1.4 in \cite{HM}. A similar argument shows that the case  $r=1$ cannot occur since $|\mathrm{Out}(S)|\in\{f, \: 2f\}$ and  $Q$ is not cyclic.
\end{proof}

\begin{theorem}
\label{non}
Let $p$ be a prime and let $G$ be a finite group which is not $p$-solvable. There exists a factorization $p-1 = ab$ with $a$ and $b$ positive integers such that $k_{p}(G) \geq a$ and $k_{p'}(G) \geq b$. Equalities in both inequalities cannot occur at the same time.
\end{theorem}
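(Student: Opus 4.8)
The plan is to reduce Theorem \ref{non} to the almost simple case treated in Lemma \ref{almostsimple} by a standard normal-subgroup induction, exploiting the fact that $G$ being non-$p$-solvable forces a nonabelian chief factor of order divisible by $p$ somewhere in the structure. First I would set up the induction on $|G|$: by Lemma \ref{nonsol} we may assume $k_p(G) \geq 2$, $k_{p'}(G) \geq 3$ and $p \geq 7$, so in particular all cases of equality that need to be excluded involve $k_p(G) = a \geq 2$ together with $k_{p'}(G) = b \geq 3$, and I only need to produce a factorization $p-1 = ab$ with $k_p(G) \geq a$, $k_{p'}(G) \geq b$, and rule out simultaneous equality.

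Next I would locate a suitable normal subgroup. Since $G$ is not $p$-solvable, there is a normal subgroup $N \trianglelefteq G$ and a chief factor $N/M$ (with $M \trianglelefteq G$) that is a direct power $S^t$ of a nonabelian simple group $S$ with $p \mid |S|$; choose $N$ minimal with $p \mid |N|$ and $N$ not $p$-solvable, or alternatively take $N$ to be the preimage in $G$ of the socle of $G/R$ where $R = O_{p'}(G)$ (after first quotienting out $O_{p'}(G)$, which by Lemma \ref{l1} only decreases $k_p$ and $k_{p'}$, hence is harmless for the inequalities, while the equality analysis must be revisited — see below). The cleanest route is: let $C = C_G(N/M)$ acting on the chief factor; then $G/C$ embeds in $\operatorname{Aut}(S^t) = \operatorname{Aut}(S) \wr S_t$, and the projection to a single coordinate gives a nontrivial action making the relevant section almost simple with socle $S$. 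One then applies Lemma \ref{almostsimple} to this almost simple section $\overline{G}$, obtaining a factorization $p - 1 = \overline{a}\,\overline{b}$ with $k_p(\overline{G}) \geq \overline{a}$, $k_{p'}(\overline{G}) \geq \overline{b}$, not both equalities.

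Then I would transfer these bounds back up to $G$ using Lemma \ref{l1} together with the "gain" coming from the normal subgroup: because $p \mid |N|$ (indeed $N$ contains nontrivial $p$-elements not seen in the quotient realizing $\overline{G}$), one expects $k_p(G) \geq k_p(\overline{G}) + (\text{something} \geq 1)$, exactly as in the proof of Lemma \ref{almostsimple} where $k_p(G) \geq k_p(G/S) + 1$ was used. Concretely, if $\overline{G} = G/K$ for a normal subgroup $K$ with $p \mid |K|$, then $k_p(G) \geq k_p(\overline{G}) + 1 \geq \overline{a} + 1$ (the extra class coming from a nontrivial $p$-element of $K$), while $k_{p'}(G) \geq k_{p'}(\overline{G}) \geq \overline{b}$; taking $a = \overline{a}$... no — rather, one has to re-factor: since $k_p(G) \geq \overline{a} + 1 > \overline{a}$ strictly, we may instead keep the factorization $p-1 = \overline{a}\,\overline{b}$ and note $k_p(G) \geq \overline{a}$, $k_{p'}(G) \geq \overline{b}$ with the first inequality \emph{strict}, so equality cannot occur in both. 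If instead $\overline{G}$ is itself a quotient of $G$ with $p \nmid |\ker|$, then $k_p(G) \geq k_p(\overline{G}) \geq \overline{a}$ and $k_{p'}(G) \geq k_{p'}(\overline{G}) \geq \overline{b}$, and since Lemma \ref{almostsimple} already forbids simultaneous equality in $\overline{G}$, we are done directly. The remaining configuration is when $G/O_{p'}(G)$ is already almost simple, i.e.\ Lemma \ref{almostsimple} applies essentially verbatim after factoring out $O_{p'}(G)$ — here one must check that quotienting by $O_{p'}(G) \neq 1$ makes at least one of the two inequalities strict, which follows because a nontrivial $p'$-element of $O_{p'}(G)$ gives $k_{p'}(G) > k_{p'}(G/O_{p'}(G))$.

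The main obstacle I anticipate is the bookkeeping in the equality analysis: Lemma \ref{l1} gives only inequalities $k_p(G/N) \leq k_p(G)$ and $k_{p'}(G/N) \leq k_{p'}(G)$, so to forbid simultaneous equality in $G$ one needs, at each reduction step, to certify that \emph{some} inequality is strict — either inherited from the almost simple section via Lemma \ref{almostsimple}, or created by the passage from a proper quotient to $G$ (a nontrivial normal $p$-subgroup contributing an extra $p$-class, or a nontrivial normal $p'$-subgroup contributing an extra $p'$-class, or the "$+1$" gain when $p$ divides the order of the kernel). I would organize this into a short case distinction according to whether $O_{p'}(G) = 1$, whether $O_p(G) = 1$, and the structure of $G/O_{p'}(G)$; in each case a nontrivial normal subgroup of the appropriate type is present precisely because $G$ is non-$p$-solvable, and strictness follows. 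A secondary technical point is ensuring the almost simple section genuinely has socle of order divisible by $p$ and genuinely realizes enough of the outer structure to invoke Lemma \ref{almostsimple}, but this is guaranteed by the choice of $N/M$ as a chief factor with $p \mid |S|$ together with the $C_G(N/M)$-quotient construction.
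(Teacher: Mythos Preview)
Your reduction to Lemma \ref{almostsimple} works when $t=1$, and in that case your argument is essentially the paper's. The gap is in the case $t \geq 2$. You write that $G/C$ embeds in $\operatorname{Aut}(S) \wr S_t$ and that ``the projection to a single coordinate'' produces an almost simple section $\overline{G}$, which you then treat as a \emph{quotient} $G/K$ of $G$ in order to invoke Lemma \ref{l1}. But when $G$ permutes the $t$ copies of $S$ transitively (which is exactly the situation since $S^t$ is a chief factor), there is no such quotient: the almost simple group $N_G(S_1)/C_G(S_1)$ is only a \emph{section} of $G$, and Lemma \ref{l1} gives no control over $k_p$ or $k_{p'}$ of a section. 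Concretely, take $G = S \wr C_t$ with $t \geq 2$; the unique minimal normal subgroup is $S^t$, $G/S^t \cong C_t$, and no quotient of $G$ is almost simple with socle $S$. So the bookkeeping you set up (``if $\overline{G}=G/K$ for a normal subgroup $K$ with $p\mid |K|$'') cannot be carried out, and the transfer of the bounds from $\overline{G}$ back to $G$ breaks down.

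The paper does not attempt to reduce the $t\geq 2$ case to the almost simple case at all. Instead it uses a direct counting argument: if $c$ denotes the number of $\operatorname{Aut}(S)$-orbits on the set of $p'$-elements of $S$, then the proof of \cite[Lemma 4.3]{MS} gives $k_{p'}(G) \geq \binom{t+c-1}{t}$, and \cite[Theorem 2.1(iii), Table 1]{HM} gives $c \geq \sqrt{p-1}$. Hence $k_{p'}(G) \geq \binom{c+1}{2} > c^2/2 \geq (p-1)/2$, and combined with $k_p(G)\geq 2$ from Lemma \ref{nonsol} one takes $a=2$, $b=(p-1)/2$ with the second inequality strict. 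This is the missing idea in your proposal: for $t\geq 2$ one needs a quantitative lower bound on $k_{p'}(G)$ coming from the tensor-product/multiset combinatorics of orbits across the $t$ factors, not a reduction to a single almost simple quotient.
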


\begin{proof}
Let $S$ be a nonabelian simple composition factor of $G$ whose order is divisible by $p$. Let $M$ and $N$ be normal subgroups in $G$ such that $M > N$ and $M/N$ is isomorphic to $S_{1} \times \cdots \times S_{t}$ where each $S_i$ is isomorphic to $S$. We may assume that $N = 1$ by Lemma \ref{l1}.

Let $t \geq 2$. Let the number of orbits of $\mathrm{Aut}(S)$ on the set of $p'$-elements in $S$ be $c$. In this case $k_{p'}(G) \geq \binom{t+c-1}{t}$ by the proof of \cite[Lemma 4.3]{MS}. We have $c \geq \sqrt{p-1}$ by \cite[Theorem 2.1 (iii)]{HM} and \cite[Table 1]{HM}. Thus, $$k_{p'}(G) \geq \binom{t+c-1}{t} \geq \binom{c+1}{2} > \frac{c^{2}}{2} = (p-1)/2.$$ Since we may assume that $k_{p}(G) \geq 2$ by Lemma \ref{nonsol}, the result follows.

Let $t=1$. The group $S = S_1$ is normal in $G$. We may assume that the centralizer of $S$ in $G$ is trivial by Lemma \ref{l1}. It follows that $G$ is almost simple with socle $S$. The result follows from Lemma \ref{almostsimple}.
\end{proof}

\section{Four steps}

In this section we continue with the proof of Theorem \ref{main}. 

Let $G$ be a counterexample to Theorem \ref{main} with $|G|$ minimal. We suppose that $G$ is a $p$-solvable group. We have that  $(G,p) \not= (C_{11}^2\rtimes \text{\rm SL}(2,5), 11)$, otherwise Part (ii) of Theorem \ref{main} holds. Let $V$ be a minimal normal subgroup in $G$. In this section we will prove four properties of $G$ given in four steps.  

We know from Lemma \ref{l1} that $k_p(G/V)\leq k_p(G)$ and $k_{p'}(G/V)\leq k_{p'}(G)$. Observe that $k_p(G/V) < k_p(G)$ if $p$ divides $|V|$, and  $k_{p'}(G/V) < k_{p'}(G)$ if $p  \nmid |V|$.

\medskip

{\it Step 1. $V$ is an elementary abelian $p$-group of rank at least $2$ and $p$ does not divide $|G/V|$.}

\medskip

Assume that $p$ divides $|G/V|$. By the fact that $G$ is a minimal counterexample, we know that $G/V$
satisfies either Part (i) or Part (ii) of Theorem \ref{main}. 

Let $p=11$ and $G/V\cong (C_{11})^{2} \rtimes \mbox{SL}(2,5)$. In this case $k_{11}(G/V)=1$ and $k_{11'}(G) \geq k_{11'}(G/V) \geq k(\mbox{SL}(2,5)) \geq 9$. If $11  \mid |V|$, then $k_{11}(G)\geq 2$
contradicting $G$ being a counterexample, and if $11  \nmid |V|$, then it is easy to see that this forces $k_{11'}(G) \geq 11$, again implying that $G$ is not a counterexample.

It remains to consider the case that $G/V$ satisfies Part (i) of Theorem \ref{main}.
Then there exist positive integers $a$ and $b$ such that $p-1=ab$ and $a\leq k_p(G/V) \leq k_p(G)$ and $b\leq k_{p'}(G/V)\leq k_{p'}(G)$. If $p\mid |V|$, then even $k_p(G/V) < k_p(G)$, and we obtain a contradiction. If $p  \nmid |V|$, then $a\leq k_p(G)$ and $b\leq k_{p'}(G/V)< k_{p'}(G)$, which is again a contradiction. 

Thus we have shown that $p$ does not divide $|G/V|$. Therefore the prime $p$ must divide $|V|$ and so $V$ is an elementary abelian $p$-group. The size of $V$ must be at least $p^{2}$ by Theorem \ref{Sylowp}.

\medskip

{\it Step 2. $G =VH$ for a subgroup $H$ of $G$ of order coprime to $p$ and $H$ acts faithfully on $V$. The group $H$ acts irreducibly on $V$.}

\medskip

Since $V$ is a minimal normal subgroup of $G$, the second claim follows. We claim that $V$ is the unique minimal normal subgroup of $G$. Let $M$ be another minimal normal subgroup of $G$. It is well known that $G$ is isomorphic to a subgroup of $G/V\times G/M$. But $|G/V|$ and $|G/M|$ are not divisible by $p$ by the previous paragraph. This contradicts the fact that $|G|$ is divisible by $p$. It follows from Step 1 and the Schur-Zassenhaus theorem that $G$ splits over $V$, that is, $G=VH$ for a subgroup $H$ of $G$ of order coprime to $p$. Moreover, $H$ acts faithfully on $V$.

\medskip

{\it Step 3. If $|V| = p^{2}$, then $H$ is not solvable.}

\medskip

Let $G$ be solvable. Since $V$ is the unique minimal normal subgroup of $G$, we may view $H$ as an irreducible $p'$-subgroup of $\textrm{GL}(2, p)$, and the structure of $H$ is described in \cite[Theorem 2.11]{manz}. \\ 
In cases (a) and (b) in \cite[Theorem 2.11]{manz}, we can conclude that $H$ contains an abelian
normal subgroup $X$ such that $x := |X|\leq  p^2 - 1$ and $|H : X| \leq 2$.
%Thus, $k(G)\geq \frac{p^2-1}{2x}+\frac{x}{2}$.

First suppose that $H=X$. Then $H$ acts frobeniusly on $V$. It follows that
$$
k_{p'}(G)=k(H)=x
\quad\text{and}\quad
k_p(G)=n(H, V)-1 =\frac{|V|-1}{|H|}=\frac{p^2-1}{x}.
$$
If $x> p-1$, then we can choose $a=1$ and $b=p-1$, which is a contradiction. If $x\leq p-1$, then $(p^2-1)/x\geq p+1>p-1$ and we choose $a=p-1$ and $b=1$, contradiction.

Let us assume that $|H:X|=2$. Then
$$
k_{p'}(G)= k(H)\geq \frac{|X|}{|H:X|}=\frac{x}{2}
\quad\text{and}\quad
k_p(G)\geq n(H, V)-1\geq \frac{|V|-1}{|H|}=\frac{p^2-1}{2x}.
$$
If $x>p-1$, we may choose $a=2$ and $b=(p-1)/2$, contradiction. If $x\leq p-1$, then
$$
k_p(G)\geq \frac{p^2-1}{2(p-1)}=\frac{p+1}{2}>\frac{p-1}{2}
$$
and we choose $a=(p-1)/2$ and $b=2$, contradiction. (Note that we may assume that $H>1$ and thus, $k_{p'}(G) \geq 2$.)

We are now in Case (c) in \cite[Theorem 2.11]{manz}, and  we follow the proof of Theorem in \cite{hethelyi-kulshammer2003} by adjusting it to our hypothesis. Then $F(H) = Q \circ X$ (central product)
and $Q \cap X = Z(Q)$, where $Q\cong Q_8$, which is normal in $H$, and $X = Z(H)$ is cyclic and $|X|$ divides $p - 1$. Moreover, $H/F(H)$ acts irreducibly on
$Q/Z(Q)$, so $H/F (H) \cong Z_3$ or $H/F (H)\cong S_3$. Let $x := |X : Z(Q)|=|X|/2$, and then $x$ divides $(p-1)/2$ since $|X|$ divides $p - 1$. Let $(p-1)/2=xk$ for some positive integer $k$. If we count the irreducible characters of $H$ as in the proof of Theorem in \cite{hethelyi-kulshammer2003}, then we find that $k(H) = 7x$, $|H| = 24x$ in case $H/F(H)\cong Z_3$, and $k(H) = 8x$, $|H| = 48x$  in case $H/F(H)\cong S_3$. In the first case, we have \begin{equation}\label{30} k_{p'}(G)= k(H)=7x=\frac{7(p-1)}{2k}>\frac{2(p-1)}{2k}=\frac{p-1}{k}\end{equation} and \begin{equation}\label{40} k_p(G)\geq n(H, V)-1\geq \frac{|V|-1}{|H|}\geq\frac{p^2-1}{24x}>\frac{p-1}{2x}=k,\end{equation}  where the last inequality holds  for $p> 11$. Thus, we choose $a=k$ and $b=(p-1)/k$ for the prime $p> 11$, contradiction. One can find suitable $a$ and $b$ for primes $p\leq 11$ by considering Inequalities (\ref{30}), (\ref{40}) and the fact that the integer $x$ divides $(p-1)/2$, which gives us a contradiction.

Similar calculations lead to a contradiction that we have Theorem \ref{main} for the prime $p>23$ in case  that $k(H) = 8x$ by replacing $7x$ by $8x$ in Inequality (\ref{30}) and $24x$ by $48x$ in Inequality (\ref{40}). For the prime $p\leq 23$, we again find the suitable $a$ and $b$ in Theorem \ref{main}, contradiction.

\medskip

{\it Step 4.  We will show that $|V| \geq p^{3}$.}

\medskip

We may assume that $|V| = p^{2}$ and $H$ is not solvable by Steps 1 and 3. In this case,  by \cite[Section XII.260]{dickson} or
\cite[II, Hauptsatz 8.27]{huppert} we know that $p\equiv \pm 1 (\text{mod } 10)$ and also that either $H/Z(H)\cong A_5$ or $H/Z(H)\cong S_5$ (given that $(|H|, |V|)=1$). Thus we write $|H|=60x$, where $x=1$ or 2 depending on whether $H/Z(H)\cong A_5$ or $H/Z(H)\cong S_5$, respectively.\\\\
First suppose that $p>60$.

Now write $Z=Z(H)$ and consider $V_Z$, that is, $V$ viewed as a $Z$-module. If $V_Z$ is irreducible (i.e., $Z$ acts irreducibly on $V$), then
by \cite[II, Hilfssatz 3.11]{huppert} or \cite[Theorem 2.1]{manz}, $G$ is solvable, a contradiction. Hence $V_Z$
is the direct sum of two $Z$-modules of order $p$, and since clearly $Z$ acts frobeniusly on $V$ (i.e., $ZV$ is a Frobenius group), this forces that $|Z|$ divides $p-1$.
Now clearly $k_{p'}(G) = k(H)\geq |Z|+1$. \\
%\indent First assume that  and $H/Z(H)\cong A_5$. Since the Schur multiplier of $A_5$ is $C_2$, we have the following cases: In the first case, $H=SL(2, 5)\times Z_0$  for a subgroup $Z_0$ of $Z(H)$ with $|Z_0|=\frac{|Z(H)|}{2}$. Note that $|Z_0|$ is an odd number because $Z(H)$ is cyclic. It follows that we get $k(H)=9.\frac{|Z|}{2}=4.5|Z|.$ In the latter case, $H=A_5\times Z(H)$, and hence  we have $k(H)=5 |Z|.$
Then, 
\begin{equation}\label{10}
	k_p(G)\geq n(H, V)-1 \geq \frac{|V|-1}{|H|}\geq\frac{p^2-1}{60x|Z|}=\frac{(p+1)(p-1)}{60x|Z|} > \frac{p-1}{x|Z|}  
\end{equation}
where the last inequality follows as $p>60$. If $x=1$, then we choose  $a= (p-1)/|Z|$ and $b=|Z|>1$, which is a contradiction.
So we may assume that $x=2$, that is, $H/Z(H)\cong S_5$. Then by Inequality (\ref{10}), we see that $k_p(G)> \frac{p-1}{|Z|}$  is still true when $p>120$, and hence we may choose  $a= (p-1)/|Z|$ and $b=|Z|>1$, which is a contradiction. Now we will have a contradiction to  Theorem \ref{main} for the primes $p>60$ in the case where $H/Z(H)\cong S_5$.

Since the Schur multiplier of $S_5$ is $C_2$, we have the following cases: In the first case, $H=(C_2.S_5)\times Z_1$  for a subgroup $Z_1$ of $Z(H)$ with $|Z_1|=\frac{|Z(H)|}{2}$, where $C_2.S_5 $ is the second central stem extension by $C_2$ of $S_5$. Note that $|Z_1|$ is an odd number because $Z(H)$ is cyclic. It follows that we get $k(H)=12.\frac{|Z|}{2}=6|Z|.$ In the latter case, $H=S_5\times Z(H)$, and hence  we have $k(H)=7 |Z|.$

Therefore, we have that

\begin{equation}\label{2}  k_p(G)\geq n(H, V)-1 \geq \frac{|V|-1}{|H|}\geq\frac{p^2-1}{120|Z|}
	\quad \text{and} \quad k_{p'}(G)= k(H)\geq 6|Z|.
\end{equation}

Since $p\equiv \pm 1 (\text{mod } 10)$, the primes between $60$ and $120$ that we have to consider are  $61, 71, 79, 89, 101, 109$.

Let $p=61$. We know that $|Z|$ divides $p-1=60$, and also by \cite[Section 2]{HungSambaleTiep}, we can assume that $\displaystyle|Z|\leq \frac{p-1}{4}$. Thus,
$|Z|\leq 15$, and hence we get $|Z|\in\{2, 3, 4, 5, 6, 10, 12, 15\}$. Let $|Z|=2$. Then by (\ref{2}),  we have $$k_p(G)\geq \frac{p^2-1}{120|Z|}=\frac{31}{|Z|}=15.5 \quad   \textrm{and} \quad   k_{p'}(G)= k(H)\geq 6|Z|=12.$$ Thus, we can choose $(a, b)=(5, 12)$, contradiction.  Again by using the inequalities in (\ref{2}),  we can choose $(a, b)=(5, 12)$ if $|Z|\in\{3, 4, 5, 6\}$,   and   $(a, b)= (1, 60)$ if $|Z|\in \{10, 12, 15\}$, which are  contradictions.

Let $p\in\{71, 79, 89, 101, 109\}$. Then by similar arguments as in the previous paragraph we have the integers $a$ and $b$ in Theorem \ref{main} as follows:

\begin{table}[h!]
\begin{tabular}{|c|c|}
	\hline $p$ & $(a, b)$\\
	\hline   $71$ & $\{ (14, 5), (5, 14), (2, 35), (1, 70)\}$  \\
	$79$  &  $\{ (13, 6), (6, 13), (1, 78)\}$\\
	$89$ & $\{ (22, 4), (4, 22),  (1, 88)\}$\\
	$101$& $\{ (10, 10), (4, 25), (1, 100)\}$\\
	$109$ & $\{ (18, 6), (6, 18), (1, 108)\}$ \\\hline
    \end{tabular}
    \caption{Possible $(a, b)$ pairs}
  \end{table}
  \vspace{-5mm}
\noindent This is a contradiction, which proves Theorem \ref{main} for the primes $p>60$ in the case that $H/Z(H)\cong S_5$. 
%\indent Let $p=79$. We know that $|Z|$ divides $p-1=78$, and also by \cite[Section 2]{HungSambaleTiep}, we can assume that $\displaystyle|Z|\leq \frac{p-1}{4}$. Thus,
%$|Z|\leq 19.5$, and hence we get $|Z|\in\{2, 3, 6, 13\}$. Let $|Z|=2$. Then by (\ref{2}),  we have $$k_p(G)\geq \frac{p^2-1}{120|Z|}=\frac{52}{|Z|}=26 \quad   \textrm{and} \quad   k_{p'}(G)\geq k(H)\geq 6|Z|=12.$$ Thus, we can choose $a=13$ and $b=6$, contradiction. If $|Z|=3$,  again we can choose $a=13$ and $b=6$, contradiction. If $|Z|=6$, we get $$k_p(G)\geq \frac{p^2-1}{120|Z|}=\frac{52}{|Z|}\geq 8.6 \quad  \textrm{and} \quad   k_{p'}(G)\geq k(H)\geq 6|Z|=36,$$ and thus we can choose $a=6$ and $b=13$, contradiction. If $|Z|=13$, we get $$k_p(G)\geq \frac{p^2-1}{120|Z|}=\frac{52}{|13|}=4   \quad \textrm{and} \quad   k_{p'}(G)\geq k(H)\geq 6|Z|=78,$$ and hence we can choose $a=2$ and $b=39$, contradiction. This final contradiction proves Conjecture \ref{conj} for the primes $p>60$ in the case that $H/Z(H)\cong S_5$.
Now let us consider the primes $p\leq 60$. Since $p\equiv \pm 1 (\text{mod } 10)$, we get $p\in\{11, 19, 29, 31, 41, 59\}$. We use GAP \cite{GAP} to confirm the result that we can always find the integers $a$ and $b$ as in Theorem \ref{main}, which is a contradiction.

\section{Small primes}
\label{sec:small_primes}

In this section, we show that Theorem \ref{main} holds for every prime $p$ at most $43$.

Assume that Theorem \ref{main} is not true for a prime $p$ at most $43$ and a finite group $G$. Notice that we may assume that $k_p(G) \geq 3$ (by \cite[Theorem 1.1 and Section 2]{HungSambaleTiep}). Moreover, we have $k_{p'}(G)\geq 2$. Thus, we may assume that $p\geq 7$. 

We also know that $G = HV$ for a subgroup $H$ of $G$ of order coprime to $p$ and for an elementary abelian $p$-subgroup $V$ of $G$ which is normal in $G$. Moreover, $V$ is a faithful and irreducible $H$-module of size at least $p^{3}$. These follow from Section 5. 
 
Let $p=7$. If $k_p(G) \geq 4$, then we have Theorem \ref{main} by choosing $a=3$ and $b=2$ because $k_{p'}(G)=k(H)\geq 2$, a contradiction. Thus, $k_p(G)= 3$ and $k_{p'}(G)=2$. By \cite[Example 12.4]{hup}, 
$|H|=2$, which gives us the contradiction that \[k_p(G)= n(H, V)-1\geq\frac{|V|-1}{|H|}\geq 171>3.\]
This kind of contradiction using that  $k_p(G)\geq (|V|-1)/|H|$ will occur many more times in this section.

Let $p=11$. If $k_p(G) \geq 6$, then we have Theorem \ref{main} by choosing $a=5$ and $b=2$ because $k_{p'}(G)=k(H)\geq 2$, a contradiction. Thus, we may assume that $3\leq k_p(G)\leq 5$. If $k_{p'}(G)=k(H)\geq 5$, then we are done. Thus, we assume that $2\leq k_{p'}(G)=k(H)\leq 4$. By \cite[Example 12.4]{hup}, we know that $|H|\leq 12$. Thus we have the contradiction that  $(|V|-1)/|H|\geq \frac{11^3-1}{12}>5.$

Let $p=13$. If $k_p(G)>4$ and $k_{p'}(G)\geq3$, then we are done. Let us assume that $3\leq k_p(G)\leq 4$,   and hence $2\leq k_{p'}(G)=k(H)\leq 4$. By \cite[Example 12.4]{hup}, we have $|H|\leq 12$, and hence we get the contradiction that $(|V|-1)/|H|>4.$

If $p=17$, then we have $3\leq k_p(G)\leq 8$ and so, $2\leq k_{p'}(G)\leq 5$. Thus, we have the contradiction that   $(|V|-1)/|H|>8.$

Let $p=19$. We have $3\leq k_p(G)\leq 9$ and so, $2\leq k_{p'}(G)\leq 6$. Thus, $|H|\leq 72$ by \cite[Remark 12.4]{hup}, which gives us the contradiction that  $(|V|-1)/|H|>9.$

Let $p=23$. We may assume that $3\leq k_p(G)\leq 11$ and so, $2\leq k_{p'}(G)\leq 10$. Thus,  $|H|\leq 20160$ by \cite[Table 1, 2]{Vera1} and also, $|V|=23^3$ because of $3\leq k_p(G)\leq 11$. Since $3\leq k_p(G)\leq 11$, the calculations show by \cite[Table 1, 2]{Vera1}  that $H\cong C_7^2\rtimes \textrm{SL}(2, 3), \:  C_7^2\rtimes \textrm{SL}(2, 3).C_4,  \: A_7,  \: C_{11}^2\rtimes \textrm{SL}(2, 5)$, $M_{11}$ or $\textrm{PSL}(3, 4)$. Since $H$ acts faithfully on $V$, we have that $H$ is a subgroup of $\textrm{GL}(3, 23)$, whose order is not divisible by $5$ and $7^2$. On the other hand, either $5$ or $7^2$ divides the order of $H$, which is a contradiction. 

Let $p=29$.  Hence we may assume that $3\leq k_p(G)\leq 14$ and so, $2\leq k_{p'}(G)\leq 9$. Thus, $|H|\leq 2520$ by \cite[Table 1]{Vera1}. Thus,   $(|V|-1)/|H|\geq 10,$which gives us $10\leq k_{p}(G)\leq 14$. If $k_{p'}(G)\geq 3$, then we have Theorem \ref{main}. Thus, we may assume that $k_{p'}(G)=2$, which gives us $|H|\leq 2$. Then we have the contradiction that  $(|V|-1)/|H|\geq 12194.$

Let $p=31$. We may assume that $3\leq k_p(G)\leq 15$ and so, $2\leq k_{p'}(G)\leq 10$. If $k_{p'}(G)\leq 9$, then $|H|\leq 2520$ by \cite[Table 1]{Vera1}. Thus,   $(|V|-1)/|H|\geq 12,$ which gives us $12\leq k_{p}(G)\leq 15$. If $k_{p'}(G)\geq 3$, then we have Theorem \ref{main}. Thus, we may assume that $k_{p'}(G)=2$, which gives us $|H|\leq 2$. Then we have the contradiction that  $(|V|-1)/|H|\geq 14895.$
It follows that we may assume that $k(H)=k_{p'}(G)=10$, which leads to $k_{p}(G)=3$. Thus, $|H|\leq 20160$ by \cite[Table 2]{Vera1} and also, $|V|=31^3$ because of $k_p(G)=3$. Since $k_p(G)=3$, the calculations show by \cite[Table 2]{Vera1}  that $H\cong (C_{11} \times C_{11})\rtimes \textrm{SL}(2, 5) $ or $\textrm{PSL}(3, 4)$. Since $H$ acts faithfully on $V$, we have that $H$ is a subgroup of $\textrm{GL}(3, 31)$, whose order is not divisible by the primes $7$ and $11$. On the other hand, either $11$ or $7$ divides the order of $H$, which is a contradiction. 

Let $p=37$. We may assume that $3\leq k_p(G)\leq 18$ and so, $2\leq k_{p'}(G)\leq 12$. If $k_{p'}(G)\leq 9$, then $|H|\leq 2520$ by \cite[Table 1]{Vera1}. Thus,   $(|V|-1)/|H|\geq 21,$ which is a contradiction. Thus, $10\leq k_{p'}(G)\leq 12$. This gives us $k_p(G)=3$. By examining  \cite[Tables 2, 3]{Vera1} and  \cite[Table 1]{Vera2}, we have that $H\cong \textrm{PSL}(3, 4)$, $\textrm{Sz}(8)$, $(C_{19} \times C_{19})\rtimes \textrm{SL}(2, 5)$ or $M_{22}$. Also, for these groups $H$ we can assume that $|V|=37^3$.  Since the order of  $\textrm{GL}(3, 37)$ is not divisible by $5$ and  $19^2$ we have the contradiction that $H$ is a subgroup of $\textrm{GL}(3, 37)$.

Let $p=41$. We may assume that $3\leq k_p(G)\leq 20$ and so, $2\leq k_{p'}(G)\leq 13$. If $k_{p'}(G)\leq 9$, then $|H|\leq 2520$ by \cite[Table 1]{Vera1}. Thus,   $(|V|-1)/|H|\geq 26,$ which is a contradiction. Thus, $10\leq k_{p'}(G)\leq 13$. This gives us $k_p(G)=3$ or $4$. By examining  \cite[Tables 2, 3]{Vera1},  \cite[Table 1]{Vera2}, \cite[Table 1]{Vera3} we have that $H\cong \textrm{PSL}(3, 4)$, $\textrm{Sz}(8)$, $(C_{19} \times C_{19})\rtimes \textrm{SL}(2, 5)$ or $M_{22}$.
Also,  we can assume that $|V|=41^3$.  Since the order of $\textrm{GL}(3, 41)$ is not divisible by  $9$, $11$, $13$ and  $19$ we have a contradiction that $H$ is a subgroup of $\textrm{GL}(3, 41)$.

Let $p=43$. We may assume that $3\leq k_p(G)\leq 21$ and so, $2\leq k_{p'}(G)\leq 14$. If $k_{p'}(G)\leq 9$, then $|H|\leq 2520$ by \cite[Table 1]{Vera1}. Thus,   $(|V|-1)/|H|\geq 32,$ which is a contradiction. Thus, $10\leq k_{p'}(G)\leq 14$. This gives us $k_p(G)=3$ or $4$. By examining  \cite[Tables 2, 3]{Vera1},  \cite[Table 1]{Vera2}, \cite[Table 1]{Vera3} we have that $H\cong \textrm{PSL}(3, 4)$, $\textrm{Sz}(8)$, $(C_{19} \times C_{19})\rtimes \textrm{SL}(2, 5)$, $M_{22}$, $\textrm{PSL}(3, 4) \cdot C_2$ or $\textrm{PSU}(3, 5)$.
Also, for these groups $H$ we can assume that $|V|=43^3$.  Since the order of $\textrm{GL}(3, 43)$ is not divisible by $5$ and  $19$ we have the contradiction that $H$ is a subgroup of $\textrm{GL}(3, 43)$.

We conclude that Theorem \ref{main} holds for every prime $p$ at most $43$.

\section{Almost quasisimple groups}

The purpose of this section is to prove the following.

\begin{proposition}
	\label{prop:k_star}
	Let $p$ be a prime at least $47$. Let $F$ be the finite field of order $q$ and characteristic $p$. Let $V$ be an absolutely irreducible, faithful and finite $FH$-module for a finite group $H$. Let $H=C\circ K$ where $C$ is a subgroup of the center $Z$ of $\GL(V)$ and $K$ is almost quasisimple. Suppose that $p$ does not divide $|H|$. Let $S$ be the socle of $K/Z(K)$. Let $|V|=p^n=q^d$ where $d=\dim_{F}(V)$. We have
	\begin{equation}
		\label{eq:V_over_p}
		%|H|\leq \frac{|V|}{p}
		|C|\cdot |S| \cdot |\mathrm{Out}(S)| \leq \frac{|V|}{q}
	\end{equation}
	except, possibly, for the cases indicated in Table \ref{possible_exceptions_V_over_p}. 
	
	Moreover, there exists a factorization $p-1=ab$ with $a$ and $b$ positive integers such that $k_{p'}(G)\geq k^{\ast}(S)\geq a$ and $k_p(G)=n(H,V)-1\geq b$, with equalities not occurring in both cases at the same time, except, possibly, for the cases indicated in Table \ref{possible_exceptions}. In particular, if $V$ is a primitive $FH$-module, then Theorem \ref{main} holds true for $G=HV$.
	
	\begin{table}[h!]
		\begin{center}
			\begin{tabular}{ |c|c|c|c| }
				\hline
				%$\dim_F(V)$
				$d$ & $S$ & $q$ &  $q\cdot |H|/|V|<$ \\%& $|Z:C|$\\
				\hline
				$2$ & $A_5=\PSL_2(5)$ & all & - \\
				\hline
				$3$ & $A_5=\PSL_2(4)$ & $47,\dots,113$ & $2.5$\\
				\hline
				$3$ & $\PSL_2(7)$   &$47,\dots,331$  & $7$ \\%& $=1$\\
				\hline
				$4$ & $A_7$  & $47,\dots,67$ & $2.3$ \\%\\& $=1$\\
				\hline
				$6$ & $U_4(3)$ & $61,67$ & $1.9$ \\%61$\\
				\hline
			\end{tabular}
		\end{center}
		\caption{Possible exceptions to \eqref{eq:V_over_p}.}
		\label{possible_exceptions_V_over_p}
	\end{table}	
    \vspace{-4mm}
	\begin{table}[h!]
		\begin{center}
			\begin{tabular}{ |c|c|c|c|c|c|c| }
				\hline
				%$\dim_F(V)$
				$d$ & $S$ & $|\Out(S)|$ & $q$ & $k^{\ast}(S)$ & $(|V|-1)/|H|\geq $ & $|Z:C|$\\
				\hline
				$2$ & $A_5=\PSL_2(5)$ & $2$ & $\geq 47$ & $4$& $1$ & $\leq 60$\\
				\hline
				$3$ & $A_5=\PSL_2(4)$ & $2$  &$47$ & $4$ & $19$ & $1$\\
				\hline
				$3$ & $\PSL_2(7)$ & $2$  &$107$ & $5$ & $35$ & $1$\\
				$3$ & $\PSL_2(7)$ & $2$  &$103$ & $5$ & $32$ & $1$\\
				$3$ & $\PSL_2(7)$ & $2$  &$83$ & $5$ & $21$ & $1$\\
				$3$ & $\PSL_2(7)$ & $2$  &$79$ & $5$ & $19$ & $1$\\
				$3$ & $\PSL_2(7)$ & $2$  &$73$ & $5$ & $16$ & $1$\\
				$3$ & $\PSL_2(7)$ & $2$  &$67$ & $5$ & $14$ & $1$\\
				$3$ & $\PSL_2(7)$ & $2$  &$61$ & $5$ & $12$ & $1$\\
				$3$ & $\PSL_2(7)$ & $2$  &$59$ & $5$ & $11$ & $\leq 2$\\
				$3$ & $\PSL_2(7)$ & $2$  &$53$ & $5$ & $9$ & $1$\\
				$3$ & $\PSL_2(7)$ & $2$  &$47$ & $5$ & $7$ & $\leq 2$\\
				%$3$ & $\PSL_2(7)$ & $2$  &$43,41$ & $=5$ & $5$ \\
				%$3$ & $\PSL_2(7)$ & $2$  &$37,31,29,23,19$ & $=5$ & $4$ \\
				\hline
				%$4$ & $A_7$ & $2$  &$23$ & $=8$ & $4$ \\
				$4$ & $A_7$ & $2$  &$47$ & $8$ & $22$ & $1$\\
				%\hline
				%$4$ & $\PSL_2(9)$ & $4$  &$23$ & $=5$ & $8$ \\
				\hline
				%      $6$ & $U_4(3)$ & $8$ & $59$ & $\geq 10$ & $27$ \\%61$\\
				%      $6$ & $U_4(3)$ & $8$ & $47$ & $\geq 10$ & $8$ \\%61$\\
				%$6$ & $U_4(3)$ & $8$ & $23$ & $\geq 10$ & $4$ \\%61$\\
				%      \hline
			\end{tabular}
		\end{center}
		\caption{Possible exceptions to Theorem \ref{main}.}
		\label{possible_exceptions}
	\end{table}	
    \vspace{-2mm}
\end{proposition}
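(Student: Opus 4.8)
The first step is to rewrite both quantities in Theorem~\ref{main} in terms of the $H$-action on $V$. Since $p\nmid|H|$, the module $V$ is the (normal) Sylow $p$-subgroup of $G=HV$, so every nontrivial $p$-element of $G$ lies in $V$ and two such elements are $G$-conjugate precisely when they lie in the same $H$-orbit; hence $k_p(G)=n(H,V)-1\geq (|V|-1)/|H|$. By Lemma~\ref{??} we have $k_{p'}(G)=k(H)$, and $k(H)\geq k^{\ast}(S)$ because $C$ is central, so the $K$-classes inject into the classes of $H=C\circ K$, while $k(K)\geq k^{\ast}(S)$ for every almost quasisimple $K$ whose socle of $K/Z(K)$ is $S$. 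The key remark is that \eqref{eq:V_over_p} by itself finishes the argument: taking $C$ to be the full group of scalars of $H$ we get $|H|=|C|\cdot|K/Z(K)|\leq |C|\cdot|S|\cdot|\mathrm{Out}(S)|\leq |V|/q$, whence $k_p(G)\geq (|V|-1)q/|V|>q-1\geq p-1$, so $k_p(G)\geq q\geq p$ as it is an integer. Choosing $a=1$ and $b=p-1$ then gives $k_{p'}(G)\geq k^{\ast}(S)\geq 1=a$ and $k_p(G)\geq p>p-1=b$; the inequality is strict, so no double equality occurs, and since $|V|=p^n$ with $n\geq 2$ the group $G=HV$ is not of the form $C_p\rtimes C_b$, so Theorem~\ref{main}(i) holds (both sides of the ``if and only if'' being false).

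Thus the real content is \eqref{eq:V_over_p}. Let $E$ be the layer of $K$, a quasisimple normal subgroup with $E/Z(E)\cong S$ and $K/Z(K)\leq\mathrm{Aut}(S)$. Because $p\nmid|E|$, Clifford theory shows that $V|_{E}$ is a direct sum of $K$-conjugate, nontrivial, irreducible $FE$-modules in cross characteristic, each of dimension at least $R(S)$, the smallest degree of a nontrivial irreducible cross-characteristic representation of a quasisimple group with central quotient $S$; hence $d=\dim_{F}V\geq R(S)$. Together with $|C|\leq|Z(\GL(V))|=q-1$, inequality \eqref{eq:V_over_p} follows from $(q-1)\,|S|\,|\mathrm{Out}(S)|\leq q^{d-1}$. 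I would now go through the finite simple groups family by family, using the known values of $R(S)$ (Landazuri--Seitz--Zalesskii and Tiep--Zalesskii for groups of Lie type, and the classical minimal-degree results for alternating and sporadic groups) and the very small orders of $\mathrm{Out}(S)$: once $R(S)$ is not tiny the inequality holds with ample room (already $|S|\,|\mathrm{Out}(S)|\leq 47^{R(S)-2}$ suffices, since $q\geq p\geq 47$), and one checks that the only configurations $(S,d,q)$ for which $(q-1)|S||\mathrm{Out}(S)|>q^{d-1}$ can happen are $S\in\{A_5,\PSL_2(7),A_7,U_4(3)\}$ with their small $R(S)$, and then only for the finitely many $q$ recorded in Table~\ref{possible_exceptions_V_over_p}; the numerical bounds in the last column of that table come from the slightly sharper estimates of $|H|$ available in each of these cases.

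For the finitely many configurations in Table~\ref{possible_exceptions_V_over_p} one argues by hand. There $(|V|-1)/|H|$ is still bounded below through the last column of that table, so $k_p(G)$ is at least a definite fraction of $q$; combining this with the exact value of $k^{\ast}(S)$ taken from the character table of the relevant small group, and with the list of admissible orders of the scalar subgroup $C$ (equivalently the possible values of $|Z:C|$, constrained by $C\leq Z(\GL(V))$ and $p\nmid|H|$), I would, for each prime $p$ that occurs, seek a factorization $p-1=ab$ with $a\leq k^{\ast}(S)$, $b\leq k_p(G)$, and the inequality strict somewhere. Whenever this succeeds, Theorem~\ref{main} follows for $G=HV$; the configurations for which it cannot be made to succeed, or for which a simultaneous equality cannot be excluded, are exactly those in Table~\ref{possible_exceptions}. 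Finally, when $V$ is a primitive $FH$-module the whole reduction above applies, so Theorem~\ref{main} holds for $G=HV$ outside the configurations of Table~\ref{possible_exceptions}.

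The main obstacle is the bookkeeping of the last two paragraphs rather than a single hard idea: one needs cross-characteristic dimension bounds $d\geq R(S)$ sharp enough to push \eqref{eq:V_over_p} through with room, and then a careful trade-off analysis, since the size of the scalar subgroup $C$ plays $k_p(G)$ off against $k_{p'}(G)=k(H)$, and it is the arithmetic of $p-1$ — namely whether $p-1$ has a divisor in the narrow window between $(p-1)/k_p(G)$ and $k^{\ast}(S)$ — that decides whether a legitimate factorization exists. The genuinely recalcitrant case, where $R(S)$ and $k^{\ast}(S)$ are both small and $p-1$ is twice a prime, is the $\PSL_2(7)$, $d=3$ family, which is why it accounts for most of Table~\ref{possible_exceptions}.
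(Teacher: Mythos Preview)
Your overall strategy matches the paper's: reduce Theorem~\ref{main} to \eqref{eq:V_over_p} via $k_p(G)\geq(|V|-1)/|H|$, then verify \eqref{eq:V_over_p} by bounding $d$ below by the minimal cross-characteristic degree of a cover of $S$ and running through the classification. The paper carries this out with the Hiss--Malle tables for $d\leq 250$ and the Kleidman--Liebeck/Landazuri--Seitz bounds for $d\geq 251$, together with a secondary inequality $|S|\,|\Out(S)|<1.94\,q^{d-1}/p$ (giving $k_p(G)>(p-1)/2$ and hence the factorization $a=2$, $b=(p-1)/2$) to trim the exception list; your sketch is a faithful high-level description of exactly this.

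There is, however, a genuine gap in your final paragraph. You conclude only that Theorem~\ref{main} holds for primitive $V$ \emph{outside} the configurations of Table~\ref{possible_exceptions}, but the proposition asserts it holds for \emph{all} primitive $V$. The paper closes this by the observation you never invoke: since $C$ is central in $H$, one has $k_{p'}(G)=k(H)\geq |C|\cdot k^{\ast}(S)$, not merely $k(H)\geq k^{\ast}(S)$. In every row of Table~\ref{possible_exceptions} with $d\geq 3$ the column $|Z:C|$ forces $|C|$ to be essentially all of $q-1$ (or at least $(q-1)/2$), so $|C|\cdot k^{\ast}(S)\geq p-1$ and the factorization $a=p-1$, $b=1$ works with strict inequality. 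The remaining row $d=n=2$ is not disposed of by this device and requires the separate analysis of nonsolvable subgroups of $\GL(2,p)$ carried out in Step~4 of Section~6. Without this two-part finishing argument your proof of the ``In particular'' clause is incomplete.
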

\begin{proof}
	First notice that, since $|H|=|C\circ K|\leq |C|\cdot |S|\cdot |\Out(S)|$, it follows from \eqref{eq:V_over_p} that $|H|\leq |V|/q$. Hence, for groups $G=HV$ for which \eqref{eq:V_over_p} holds, our second claim holds with $a=1$ and $b=p-1$ since
	\begin{equation}
		\label{eq0:k_star}
		\frac{|V|}{|H|}-1
		<\frac{|V|-1}{|H|}
		\leq n(H,V)-1
		=k_p(G).
	\end{equation}
	If \eqref{eq:V_over_p} does not hold, we aim at showing that $|V|/|H|\geq (p+1)/2$ which allows us to take $a=2$ and $b=(p-1)/2$ in the proposition. For the possible exceptions, we calculate $|V|/|H|$ in order to find the constant $c$ in the last column of Table \ref{possible_exceptions_V_over_p} and we look for a factorization $p-1=ab$ with $k^{\ast}(S)\geq a>c$ and $b\geq (p-1)/c$. Notice that, we may assume $k^{\ast}(S)\geq 4$ and $k_p(G)=n(H,V)-1\geq 4$. We proceed as in the proof of \cite[Proposition 5.1]{Attila2016} and check the claim with \cite{HM2001,KL}. At various steps we need to determine certain thresholds for certain inequalities to hold. While these can be checked by hand, we used GAP \cite{GAP} for such calculations.
	
	We check \eqref{eq:V_over_p} by checking 
	\begin{equation}
		\label{eq1:V_over_p}
		|S|\cdot|\Out(S)|\leq q^{d-2}
	\end{equation}
	since then
	$$
	%|H|
	%\leq
	|C|\cdot|S|\cdot|\Out(S)|
	\leq (q-1)\cdot|S|\cdot|\Out(S)|
	< q\cdot|S|\cdot|\Out(S)|
	\leq q^{d-1}
	\leq \frac{|V|}{q}.
	$$
	
	When \eqref{eq1:V_over_p} fails, we check
	\begin{equation}
		\label{eq1:k_star}
		|S|\cdot|\Out(S)|< 1.94 \frac{q^{d-1}}{p}
		%    |S|%\leq \frac{2\cdot q^{d}}{(p+1)\cdot|C|\cdot|\Out(S)|}
		%    \leq 1.94\cdot \frac{q^{d-1}}{p\cdot|\Out(S)|}
		%    \leq \frac{2\cdot q^{d-1}}{(p+1)\cdot|\Out(S)|}.
	\end{equation}
	since then
	$$
	|H|< 1.94\frac{|V|}{p}\leq 2\frac{|V|}{p+1}
	$$
	which implies $(p-1)/2< k_p(G)$.
	%since $1.94/p\leq 2/(p+1)$
	% \bigskip
	
	First we deal with \cite[Table 3]{HM2001}. Let $S$ and $d$ be as in \cite[Table 2]{HM2001correction}. Since $p\geq 47$ and $|\Out(S)| < |S|$, a calculation shows that \eqref{eq1:V_over_p} holds true for $d\geq 25$ since
	$$
	|S|\cdot|\Out(S)|
	<|S|^2
	\leq 47^{d-2}
	\leq q^{d-2}.
	%|S|\leq \frac{47^{d-2}}{|S|}
	%< \frac{q^{d-2}}{|\Out(S)|}
	%< \frac{q^{d-1}}{p\cdot|\Out(S)|}.
	$$
	Let $d\leq 24$. Using the exact values $|\Out(S)|$ one checks that \eqref{eq1:V_over_p} holds true except possibly for $(d,S)=(4,A_7)$ or $(d,S)=(6,U_4(3))$. 
	
	Let $(d,S)=(4,A_7)$. By varying $q$, a calculation shows that \eqref{eq1:V_over_p} holds except if $q\leq 67$ and we indicate an upper bound on $q\cdot |H|/|V|$ in Table \ref{possible_exceptions_V_over_p}. Moreover, in this case $|\Out(S)|=2$ and one checks that \eqref{eq1:k_star} holds true except possibly for $p=q=47$. For this value of $q$, using \eqref{eq0:k_star} we see that, if $C=1$ we have $k_p(G)\geq (|V|-1)/|H| \geq 967.1$ and if $C=Z$ we have $k_p(G)\geq (|V|-1)/|H|\geq 21.04$.
	
	Let $(d,S)=(6,U_4(3))$. By varying $q$, a calculation shows that \eqref{eq1:V_over_p} holds except if $q\leq 71$. However, since $|\Out(S)|=8$, it follows from \cite[Table 2]{HM2001correction} that $3$ divides $|C|$ which cannot happen except possibly if $q\in\{61,67\}$ and we indicate an upper bound on $q\cdot |H|/|V|$ in Table \ref{possible_exceptions_V_over_p}. Moreover, using $|\Out(S)|=8$, one checks that \eqref{eq1:k_star} holds true if $q>p$ or if $q\geq 61$. 
	
	Let $S=A_n$ with $n>5$. The case $A_5=\PSL_2(4)=\PSL_2(5)$ is treated below. Since $p$ does not divide $|G|$, we have $p\geq n+1$ and, by \cite[Proposition 5.3.7]{KL}, we have $d\geq n-2$ for $n\geq 9$. The entries for $A_n$ with $n\geq 14$ have been omitted from \cite[Table 2]{HM2001correction} (see beginning of Section 6 in \cite{HM2001}). Thus, we assume that either $n\geq 14$ or $5< n\leq 13$ and $d\geq 251$. Let $x=n-2$ in the first case and let $x=251$ in the second case. In both cases, inequality \eqref{eq1:V_over_p} holds true since
	$$
	|S|
	\leq \frac{(n+1)^{x-2}}{4}
	\leq \frac{p^{d-2}}{4}
	\leq \frac{q^{d-2}}{|\Out(S)|}.
	$$
	%A similar calculation shows that \eqref{eq1:V_over_H_squared} holds true without exceptions.
	
	Let $S=\PSL_2(f)$. Let $f$ be even. Then $|S|=f(f^2-1)$ and $|\Out(S)|\leq f$. By \cite[Table 5.3.A]{KL}, we have $d\geq f-1$ except possibly if $f=4$. However, by \cite[Table 2]{HM2001}, this exception does not occur. A calculation shows that \eqref{eq1:V_over_p} holds true for $f\geq 8$ since
	$$
	|S|\cdot |\Out(S)|\leq 47^{f-3}.
	$$
	If $f=4$, then $S=\PSL_2(4)=A_5$ and $d=3$. A calculation shows that \eqref{eq1:V_over_p} holds true for $q\geq 127$. For $q\leq 113$ we indicate an upper bound on $q\cdot |H|/|V|$ in Table \ref{possible_exceptions_V_over_p}. Moreover, one checks that for $q>p$ we have $p<|V|/|H|\leq k_p(G)$ and that
	$$
	\frac{p+1}{2}\leq \frac{q^3}{\frac{p-1}{2}\cdot|\Out(S)|\cdot|S|}
	$$
	for all $q\geq 47$. This proves our claim when $C\neq Z$. A similar calculation shows that \eqref{eq1:k_star} may not hold for $p=q\in\{47,53,59,61\}$ in which case $|V|/|H|> 18.8$, $23.8$, $29.5$ and $51.5$ respectively. Thus, since $k^{\ast}(S)=4$, our claim follows for $p\in\{53,59,61\}$ using \eqref{eq0:k_star}.
	
	Let $f$ be odd. Then $|S|=f(f^2-1)/2$ and $|\Out(S)|\leq 2\cdot f$. By \cite[Table 5.3.A]{KL}, we have $d\geq (f-1)/2$ except possibly if $f=9$. However, by \cite[Table 2]{HM2001}, this exception does not occur. A calculation shows that \eqref{eq1:V_over_p} holds true for $f\geq 11$. The remaining cases are $f\in\{5,7,9\}$. By \cite[Table 2]{HM2001}, we have $d\geq 2,3,4$ respectively. 
	
	Let $f=9$ or $7$. A calculation shows that $q<|V|/|H|$ for $f=9$ and that \eqref{eq1:V_over_p} holds true for $f=7$ if $q\geq 337$. 
	For $q\leq 331$ we indicate an upper bound on $q\cdot |H|/|V|$ in Table \ref{possible_exceptions_V_over_p}.
	Calculating further, we find that $p<|V|/|H|\leq k_p(G)$ for $q>p$ and that \eqref{eq1:k_star} holds true since
	$$
	\frac{p+1}{2}<\frac{q^3}{(q-1)\cdot|\Out(S)|\cdot|S|}
	$$
	except if $p=q\leq 167$. For the remaining cases, when $S=\PSL_2(7)$, we calculate $(|V|-1)/|H|$ explicitly. Since $k^{\ast}(S)=5$, using \eqref{eq0:k_star}, we may rule out several of these cases. The remaining cases are $47\leq p=q\leq 107$. If $C\neq Z$, the claim holds except possibly if $p=47$ or $p=59$. In these two cases $|Z|=2\cdot 23$ and $|Z|=2\cdot 29$ respectively and one checks that our claim holds if $|Z:C|>2$.
	
	If $f=5$ then $S=\PSL_2(5)=A_5$ and $d=2$. A calculation shows that  \eqref{eq:V_over_p} holds true if $q\geq p^2\geq 121$. A similar calculation with \eqref{eq0:k_star} shows that \eqref{eq1:k_star} holds true for $p\geq 61$. If $p=q$ then $d=n=2$ and if we assume that $|Z:C|>60=|S|$, then $k_p(G)\geq (p-1)/2$ since 
	$$
	\frac{p+1}{2}
	<\frac{p^2-1}{|C|\cdot 2\cdot 60}
	<\frac{p^2}{|C|\cdot 2\cdot 60}
	=\frac{p^2}{|C|\cdot |\Out(S)|\cdot |S|}.
	$$
	
	For the rest of the proof we exclude the cases appearing in the tables of \cite{HM2001,HM2001correction}. In particular, we assume $d\geq 251$ and we may ignore the exceptions listed in \cite[Table 5.3.A]{KL}. Furthermore, with \cite[Proposition 5.3.8]{KL} and since $d\geq 251$, we check that \eqref{eq1:V_over_p} holds true for the sporadic groups and the Tits group. In what follows we treat the remaining simple groups of Lie type $S$ for which we bound $d$ with \cite[Theorem 5.3.9]{KL}.
	
	Let $S={}^2G_2(f)$ with $f=3^{2m+1}$ and $m\geq 1$. Here $d\geq f(f-1)$ and a calculation shows that \eqref{eq1:V_over_p} holds true for all $m$ since
	$$
	%|S|\leq \frac{47^{f(f-1)-2}}{\log_3(f)}\leq \frac{q^{d-1}}{p\cdot|\Out(S)|}.
	|S|\cdot \log_3(f)\leq 47^{f(f-1)-2}.
	%\leq \frac{q^{d-1}}{p\cdot|\Out(S)|}.
	$$
	
	Let $S=Sz(f)={}^2B_2(f)$. Here $f=2^{2m+1}$ with $m\geq 1$ and $d\geq \sqrt{f/2}\cdot(f-1)$. A calculation shows that \eqref{eq1:V_over_p} holds true for all $m$ since
	$$
	%|S|\leq \frac{47^{\sqrt{f/2}\cdot(f-1)-2}}{\log_2(f)}\leq \frac{q^{d-1}}{p\cdot|\Out(S)|}.
	|S|\cdot\log_2(f)\leq 47^{\sqrt{f/2}\cdot(f-1)-2}.
	%\leq \frac{q^{d-1}}{p\cdot|\Out(S)|}.
	$$
	
	For the rest of the groups it is computationally more convenient to take the logarithm of \eqref{eq1:V_over_p}. 
	Notice that $|S|\leq (f+1)^{\dim S}$ where $\dim(S)$ is the dimension of the ambient algebraic group. Then, for \eqref{eq1:V_over_p}, it suffices to show that
	%\begin{equation}
	%  \label{eq3:k_star}
	$$
	\log|S|\leq
	\log  ((f+1)^{\dim S})
	\leq \log\frac{47^{d_{\min}-2}}{|\Out(S)|}
	\leq \log\frac{q^{d-2}}{|\Out(S)|}
	$$
	%    \end{equation}
where $d_{\min}$ is at least $251$ and at least the lower bound on $d$ given in \cite[Theorem 5.3.9]{KL}. Thus it suffices to check the values $f$ for which
\begin{equation}
	\label{eq2:k_star}
	\dim(S)\cdot\log(f+1) 
	\leq
	5\cdot(d_{\min}-2)-\log(|\Out(S)|).
	%(d_{\min}-2)\log(47)+\log(1.97)-\log(|\Out(S)|)
\end{equation}

Let $S={}^2F_4(f)$. Here $f=2^{2m+1}$ with $m\geq 1$. We have $|S|<f^{26}$ and $|\Out(S)|=2m+1$. A calculation shows that \eqref{eq2:k_star} holds true for all $f$ since
$$
26\cdot (2m+1)
<5\cdot\left(f^4\cdot\sqrt{\frac{f}{2}}\cdot(f-1)-2\right)-\log(2m+1).
$$

Let $S={}^3D_4(f)$. We have $\dim(S)=28$ and $|\Out(S)|\leq 3\cdot \log(f)$. A calculation shows that \eqref{eq2:k_star} holds true for all $f\neq 2$ since
$$
28\cdot\log(f+1)
<5\cdot\big(f^3\cdot(f^2-1)-2\big)-\log(3\cdot \log(f)).
$$
For $f=2$ we use $d\geq 251$. Similar calculations show that \eqref{eq2:k_star} holds true except possibly if $S$ is one of the groups $G_2(2)$, $G_2(3)$. For these two cases, we check that \eqref{eq2:k_star} holds true under the assumption that $d\geq 251$.

For unbounded rank we use the fact that the lower bounds on $d$ in \cite[Theorem 5.3.9]{KL} are bounded from below by $f^{d'}$ for some integer $d'>0$. Observe that $f/\log(f+1)\geq 1$ for all $f$. We divide \eqref{eq2:k_star} by $\log(f+1)$ and notice that it suffices to show that
\begin{equation}
	\label{eq4:k_star}
	\dim(S)+c_{\Out}+10 
	\leq
	5\cdot f_{\min}^{d'-1},
\end{equation}
where $f_{\min}$ is the smallest possible value for $f$ and where $c_{\Out}$ is a constant such that $\log(|\Out(S)|)\leq c_{\Out}\cdot \log(f+1)$.

Let $S=\Omega_{2m+1}(f)$ with $m\geq3$ and $f$ odd. We have $\dim(S)=2m^2+m$ and $|\Out(S)|\leq 2\cdot \log(f)$. One checks that we may take $c_{\Out}=3$, that $d'=2m-3$ and $f_{\min}=3$. Since $m\geq 3$, a calculation shows that \eqref{eq4:k_star} holds true for all $m$ as
$$
2m^2+m+3+10
\leq 5\cdot 3^{2m-4}.
$$

Let $S=\Omega_{2m}^{-}(f)$ with $m\geq4$. We have $\dim(S)=2m^2-m$ and $|\Out(S)|\leq 8\cdot \log(f)$. One checks that we may take $c_{\Out}=3$, that $d'=2m-4$ and $f_{\min}=2$. A calculation shows that \eqref{eq4:k_star} holds true for all $m\geq 5$. We have
$$
2m^2-m+3+10
\leq 5\cdot 2^{2m-5}.
$$

Let $S=\Omega_{8}^{-}(f)$. We have $\dim(S)=28$ and $|\Out(S)|\leq 8\cdot\log(f)$. A calculation shows that \eqref{eq2:k_star} holds true for all $f>2$ since
$$
28\cdot\log(f)+1
<5\cdot\big((f^{3}+f)\cdot(f^2-1)-2\big)-\log(8\cdot\log(f)).
$$
For $f=2$ we use $d\geq 251$.

Let $S=\Omega_{2m}^{+}(f)$ with $m\geq4$. We have $\dim(S)=2m^2-m$ and $|\Out(S)|\leq 6\cdot \log(f)$. One checks that we may take $c_{\Out}=3$ that $d'=2m-4$ and $f_{\min}=2$. A calculation shows that \eqref{eq4:k_star} holds true for all $m\geq 5$ since we have
$$
2m^2-m+3+10
\leq 5\cdot 2^{2m-5}.
$$

Let $S=\Omega_{8}^{+}(f)$. We have $|S|<f^{28}$ and $|\Out(S)|\leq 6\cdot\log(f)$. A calculation shows that \eqref{eq2:k_star} holds true for all $f>2$ since
$$
28\cdot\log(f)
<5\cdot\big((f^{3}+f)\cdot(f^2-1)-2\big)-\log(8\cdot\log(f)).
$$
For $f=2$ we use $d\geq 251$.

Let $S=PSp_{2m}(f)$ with $m\geq2$. We have $\dim(S)=2m^2+m$ and $|\Out(S)|\leq 2\cdot \log(f)$. One checks that we may take $c_{\Out}=3$ and $f_{\min}=3$, that $d'=m-1$ if $f$ is odd and that $d'=2m-4$ if $f$ is even. A calculation shows that 
$$
2m^2+m+3+10
\leq 5\cdot 2^{d'-1}.
$$

Let $S=PSp_{2m}(f)$ with $m\leq 5$. A calculation shows that \eqref{eq2:k_star} holds true except possibly if $(m,f)\in\{(2,2),(2,3),(2,4),(2,5),(3,2),(3,3),(4,2),(4,3)\}$. For these cases, we check that \eqref{eq2:k_star} holds true under the assumption that $d\geq 251$.

Let $S=L_{m}(f)$ with $m\geq3$. We have $\dim(S)=m^2-1$ and $|\Out(S)|\leq 2\cdot m\cdot \log(f)$. One checks that we may take $c_{\Out}=2m$, that $d'=m-2$ and $f_{\min}=2$. A calculation shows that \eqref{eq4:k_star} holds true for all $m\geq 7$ since we have
$$
m^2-1+2m+10
\leq 5\cdot 2^{2m-5}.
$$

Let $S=L_{6}(f)$ with $m\leq 5$. A calculation shows that \eqref{eq2:k_star} holds true except possibly for $(m,f)\in\{(3,3),(3,4),(4,3)\}$. For these cases, we check that \eqref{eq2:k_star} holds true under the assumption that $d\geq 251$.

Let $S=U_{m}(f)$ with $m\geq3$. We have $\dim(S)=m^2-1$ and $|\Out(S)|\leq 2\cdot m\cdot \log(f)$. One checks that we may take $c_{\Out}=2m$, $d'=m-2$ and $f_{\min}=2$. The calculation to see that \eqref{eq4:k_star} holds is the same as for the case $S=L_{m}(f)$.

In order to prove the last claim of the proposition it is sufficient to consider the cases in Table \ref{possible_exceptions}. For the cases in the table we use the fact that $k_{p'}(G)=k(H)\geq |C|\cdot k^{\ast}(S)$ which in all cases, apart from the case when $n=d=2$, is at least $p-1$. The case $n=d=2$ was treated in Step 4 of Section 6.
\end{proof}

Next we consider certain imprimitive modules. 

\begin{proposition}
% If $H$ acts irreducibly on $V$ then Theorem \ref{main} holds true.
\label{prop:imprimitive_case}
Let $p$ be a prime at least $47$. Let $V$ be an irreducible and imprimitive $FH$-module for a finite field $F$ of characteristic $p$ and a finite group $H$. Let $V$ be induced from an $FL$-module $W$ for a subgroup $L$ of $H$ of index $t$. Let $A$ be the kernel of the action of $L$ on $W$. Assume that $L/A=C\circ K$, where $K/Z(K)$ is almost simple with socle $S$ and the size of $K$ is not divisible by $p$ and where $C$ is a subgroup of the center $Z$ of $\GL(W)$ such that $W$ is an absolutely irreducible $F(L/A)$-module. If $\dim_{F}(W)\geq 3$, then Theorem \ref{main} holds true for $G=HV$. 
%  Let $|V|=p^n=q^d$ where $d=\dim_{F}(V)$.
\end{proposition}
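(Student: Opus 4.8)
The plan is to reduce to the primitive situation handled by Proposition~\ref{prop:k_star}, using two soft lower bounds for $k_p(G)=n(H,V)-1$ together with a bound on $|H|$ in terms of the block stabiliser. Write $q=|F|$, $d=\dim_F W\geq 3$, and $V=W_1\oplus\cdots\oplus W_t$ for the decomposition into the conjugate blocks, where $W=W_1$, $t=|H:L|\geq 2$ (the case $t=1$ is the primitive case, already covered by Proposition~\ref{prop:k_star}), and $\dim_F V=td$. By Section~6 we have $p\nmid|H|$, and since $|C|$ divides $q-1$ this gives $p\nmid|\bar L|$ for $\bar L:=L/A=C\circ K$; hence $k_{p'}(G)=k(H)$ by Lemma~\ref{??}, and $k(H)\geq 3$ because $|H|\geq|\bar L|\geq|S|\geq 60$.

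First I would record two bounds for $k_p(G)$. Since $H$ permutes $\{W_1,\dots,W_t\}$ transitively, the size of the support $\{\,i:v_i\neq 0\,\}$ of $v=\sum v_i\in V$ is an $H$-orbit invariant taking each of the values $1,\dots,t$ on nonzero vectors, so $n(H,V)\geq t+1$ and
\[
k_p(G)\;\geq\;t .
\]
For the order bound, every block stabiliser is $H$-conjugate to $L$ and induces on its block a group isomorphic to $\bar L$; writing $M=\bigcap_{g\in H}L^{g}$ we get $|H:M|\leq t!$, while $M$ stabilises every block and acts faithfully on $V$, so $M$ embeds into the block-diagonal subgroup $\bar L_1\times\cdots\times\bar L_t\leq\GL(V)$ with each $\bar L_i\cong\bar L$. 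Hence $|H|\leq|\bar L|^{t}\,t!$ and
\[
k_p(G)\;\geq\;\frac{|V|-1}{|H|}\;\geq\;\frac{q^{td}}{|\bar L|^{t}\,t!}-1 .
\]

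Next I would apply Proposition~\ref{prop:k_star} to the absolutely irreducible faithful $F\bar L$-module $W$ of dimension $d\geq 3$ (with $p\nmid|\bar L|$). If inequality \eqref{eq:V_over_p} holds, then $|\bar L|=|C\circ K|\leq|C|\cdot|S|\cdot|\Out(S)|\leq|W|/q=q^{d-1}$. We may assume $t\leq p-2$, since for $t\geq p-1$ the bound $k_p(G)\geq t$ already finishes the proof with $a=p-1$, $b=1$. Then, using $t!\leq t^{\,t-1}<p^{\,t-1}$,
\[
k_p(G)\;\geq\;\frac{q^{td}-1}{q^{(d-1)t}\,t!}\;>\;\frac{q^{t}}{t!}-1\;\geq\;\frac{p^{t}}{t!}-1\;>\;p-1,
\]
so again $a=p-1$, $b=1$ works; as the inequality is strict and $|G|\geq|V|\geq p^{6}>p(p-1)$, we have $G\neq C_p\rtimes C_b$, and Theorem~\ref{main} holds in this case.

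It remains to handle the configurations for which \eqref{eq:V_over_p} fails, i.e.\ $(d,S,q)$ is one of the finitely many rows of Table~\ref{possible_exceptions_V_over_p} with $d\geq 3$: then $q=p$ is a prime in a bounded range (a proper prime power being too large), $d\in\{3,4,6\}$, and $|K|$ is one of finitely many explicit values. Here $|\bar L|=|C\circ K|\leq(q-1)|K|<q|K|$, so $k_p(G)\geq (q^{d-1}/|K|)^{t}/t!-1$. If $t>(p-1)/2$, then $k_p(G)\geq t>(p-1)/2$ and we take $a=(p-1)/2$, $b=2$. If $2\leq t\leq(p-1)/2$, a direct check — using that $x^{t}/t!$ is log-concave in $t$, so attains its minimum over an interval at an endpoint, and evaluating at $t=2$ and $t=(p-1)/2$ — shows $(q^{d-1}/|K|)^{t}/t!\geq p$ for every such $(d,S,q)$ and $t$, hence $k_p(G)>p-1$ and we take $a=p-1$, $b=1$. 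In either subcase one of the two inequalities is strict ($k_p(G)>p-1$, respectively $k_{p'}(G)=k(H)\geq 3>b$), and $G\neq C_p\rtimes C_b$ since $|V|\geq p^{6}$; hence Theorem~\ref{main} holds. The step I expect to be the main obstacle is precisely this last verification: one must be sure that over the whole range $2\leq t\leq p-2$ neither the orbit bound $k_p(G)\geq t$ (strong for large $t$) nor the order bound (strong for small $t$, where it is large because $q^{d-1}$ dominates $|K|$) drops below a usable divisor of $p-1$; the two regimes overlap near $t\approx(p-1)/2$, and the residual finitely many cases amount to a computer-assisted check.
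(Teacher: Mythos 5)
Your argument rests on two lower bounds for $k_p(G)=n(H,V)-1$: the order bound $k_p(G)\geq(|V|-1)/|H|$ together with $|H|\leq|\bar L|^t\,t!$, and the support bound $k_p(G)\geq t$. Neither is strong enough in the intermediate range of $t$, and the numerical claim on which your proof hinges is false. Write $x=|W|/|\bar L|$. The order bound gives roughly $k_p(G)\gtrsim x^t/t!$, which (because of the $t!$) increases up to $t\approx x$ and then decays steeply to far below $1$. In the worst exceptional row of Table~\ref{possible_exceptions_V_over_p}, namely $(d,S)=(3,\PSL_2(7))$, $q=p=47$, one has $x>q/7\approx6.7$, so $x^t/t!$ is about $22.6$ at $t=2$, peaks around $130$ near $t=7$, drops to about $17$ at $t=12$, and is of order $10^{-4}$ at $t=(p-1)/2=23$. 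Thus your claimed check that $(q^{d-1}/|K|)^t/t!\geq p$ for all $2\leq t\leq (p-1)/2$ fails already in the interior and fails badly at the endpoint $t=(p-1)/2$ — indeed, the log-concavity argument you cite identifies $t=(p-1)/2$ as the minimizing endpoint, where the bound is vacuous. Meanwhile the support bound yields only $k_p(G)\geq t<(p-1)/2$ in that same range. So for $t$ roughly between $12$ and $22$ in this case your proposal does not produce any usable factorization $p-1=ab$.

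The paper closes this gap with a sharper orbit-counting estimate that you did not use: if $k=n(L/A,W)$ is the number of orbits of $L/A$ on $W$, then the multiset of $L/A$-orbit types of the $t$ block-components of a vector in $V=W_1\oplus\cdots\oplus W_t$ is an $H$-invariant, and every multiset is realized, so $n(H,V)\geq\binom{k+t-1}{t}$. This bound is polynomial of degree $k-1$ in $t$, and in the exceptional cases $k\geq q/7\geq 7$, so already at $t=2$ one gets $\binom{k+1}{2}-1\geq\frac{k(k+1)}{2}-1\geq(p-1)/2$ for $p\geq47$, and it only grows with $t$. Note that in the extremal configuration $H\cong(L/A)\wr S_t$ — exactly where your bound $|H|\leq|\bar L|^t t!$ is attained with equality — the true orbit count \emph{is} this binomial coefficient, which is enormously larger than $|V|/|H|$; that is the phenomenon your order bound misses. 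A secondary issue: the estimate $|\bar L|\leq(q-1)|K|$ is essentially circular because $|K|$ is not bounded a priori (its center $Z(K)$ can be a sizable subgroup of $Z(\GL(W))$). The usable bound is $|\bar L|\leq(q-1)\,|S|\,|\Out(S)|$, coming from $\bar L/(CZ(K))\hookrightarrow\Aut(S)$; but even with this correction the order bound fails as described, so the stars-and-bars estimate is genuinely needed.
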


\begin{proof}
We may suppose that $t \geq 2$ by Proposition \ref{prop:k_star}. Since $H$ is not solvable, $k_{p'}(G) \geq 3$ by Burnside's theorem. Consider the statement of Proposition \ref{prop:k_star} with $H$ replaced by $L/A$, $V$ replaced by $W$ and $G$ replaced by $(L/A)W$. Let $k=n(L/A,W)$. Observe that $$k_p(G) \geq \binom{k+t-1}{k-1}-1 \geq \frac{k(k+1)}{2}-1 \geq k-1.$$
We have $k \geq q \geq p$ by Proposition \ref{prop:k_star} unless $L/A$, $W$ and $S$ are as in Table \ref{possible_exceptions_V_over_p} (with $\dim_{F}(W)\geq 3$). In the exceptional cases we have $k \geq q/7 \geq p/7$ and so $k(k+1)/2-1\geq (p-1)/2$ for $p \geq 47$.
\end{proof}

\section{Metacyclic sections}

%{\it Step 5. If $A$ is an abelian subgroup in $H$, then $p^{2} |H:A|^{2} > |V|$.}

%\medskip

%Since $G$ is a minimal counterexample, both $k_{p}(G)$ and $k_{p'}(G)$ are less than $p-1$. Let $A$ be an abelian subgroup in %$H$. We know that $$k(H)\geq k(A)/|H:A|=|A|/|H:A|$$ by Ernest's result (see e.g. \cite[Exercise E17.3]{huppertcharacters}). Now %$$k_{p'}(G) \geq k_{p'}(G/V) = k_{p'}(H) = k(H)$$ by Lemma \ref{l1}. These give 
%\begin{equation}
%\label{ll11}	
%p-1 > |A|/|H:A|.	
%\end{equation} 
%On the other hand, 
%\begin{equation}
%\label{lll111}	
%p-1 > k_{p}(G) \geq n(H,V) - 1 \geq \frac{|V|}{|H|} - 1 = \frac{|V|/|A|}{|H:A|} -1. 
%\end{equation}
%Inequalities (\ref{ll11}) and (\ref{lll111}) give $p(p-1) > |V|/|H:A|^{2}$, hence the result. 

%\medskip

The purpose of this section is to prove Theorem \ref{main} in the special case when the $H$-module $V$ is induced from a subspace $W$ such that the stabilizer of $W$ in $H$ modulo the kernel is a metacyclic group that acts irreducibly on $W$.

We start with a lemma that will be used not only in this section but also in a later part of the paper. 

\begin{lemma}
\label{newlemma}	
If $G = HV$ is a counterexample to Theorem \ref{main} with a prime $p$, then $H$ has no alternating composition factor of degree at least $(\ln(112)+\ln p)^2/4$.
\end{lemma}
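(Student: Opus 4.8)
The plan is to argue by contradiction: suppose $H$ has a composition factor isomorphic to $A_n$ with $n\ge (\ln(112)+\ln p)^2/4$; I will show $k_{p'}(G)\ge p$, which is incompatible with $G$ being a counterexample to Theorem~\ref{main}. First I would record the data at hand. By the reductions already carried out, $V$ is an elementary abelian $p$-group with $|V|\ge p^3$ and $H$ acts on it with $p\nmid|H|$, so Lemma~\ref{??} gives $k_{p'}(G)=k(H)$. Since $(\ln(112)+\ln p)^2/4>7$ already for $p=2$ (and the left side increases with $p$), we have $n\ge 7$, hence $\mathrm{Aut}(A_n)=S_n$. Finally, the hypothesis on $n$ rearranges to $2\sqrt n\ge\ln(112p)$, that is, $e^{2\sqrt n}\ge 112\,p$. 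So it remains to prove $k(H)\ge p$.

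The first step is the inequality $k(H)\ge k^{\ast}(A_n)$, where for a nonabelian simple group $S$ we write $k^{\ast}(S)$ for the number of $\mathrm{Aut}(S)$-orbits on the set of conjugacy classes of $S$. As $A_n$ is a composition factor of $H$, it occurs in a chief factor $M/L\cong A_n^{t}$ of $H$, with $L\trianglelefteq M\trianglelefteq H$ and $t\ge 1$. Because a quotient map carries conjugacy classes onto conjugacy classes, $k(H)\ge k(H/L)$, so I may replace $H$ by $H/L$ and assume $N:=A_n^{t}\trianglelefteq H$. Put $C=C_H(N)$; then $NC=N\times C$ since $Z(N)=1$, and conjugation embeds $H/C$ into $\mathrm{Aut}(A_n^{t})=\mathrm{Aut}(A_n)\wr S_t$. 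Every $H$-conjugacy class contained in $NC$ is a single $H$-orbit of $(N\times C)$-conjugacy classes, and the number of such orbits is at least the number of $(\mathrm{Aut}(A_n)\wr S_t)$-orbits on those classes, which in turn is at least the number $\binom{k^{\ast}(A_n)+t-1}{t}\ge k^{\ast}(A_n)$ of $t$-element multisets drawn from the $k^{\ast}(A_n)$ orbits of $\mathrm{Aut}(A_n)$ on the classes of $A_n$. This is the argument underlying \cite[Lemma~4.3]{MS}.

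The second step identifies and estimates $k^{\ast}(A_n)$. Since $\mathrm{Aut}(A_n)=S_n$ for $n\ge 7$, and two $A_n$-classes lie in a common $S_n$-orbit exactly when they fuse in $S_n$, the number $k^{\ast}(A_n)$ equals the number $\pi_e(n)$ of partitions $\lambda$ of $n$ with $n-\ell(\lambda)$ even, i.e.\ the cycle types of the even permutations in $S_n$. Euler's identity $\sum_{\lambda}(-1)^{\ell(\lambda)}x^{|\lambda|}=\prod_{k\,\text{odd}}(1-x^k)$ shows $|2\pi_e(n)-p(n)|\le q(n)$, where $q(n)$ is the number of partitions of $n$ into distinct parts; combined with $q(n)\le p(n)/2$ this gives $\pi_e(n)\ge p(n)/4$. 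Using a lower bound for the partition function of the form $p(n)>e^{2\sqrt n}/28$ (valid in the relevant range, and either cited or checked directly), I conclude
\[
k_{p'}(G)=k(H)\ \ge\ k^{\ast}(A_n)=\pi_e(n)\ \ge\ \frac{p(n)}{4}\ >\ \frac{e^{2\sqrt n}}{112}\ \ge\ p .
\]

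To finish, I would invoke Theorem~\ref{main}(i) with $a=1$ and $b=p-1$: one has $k_p(G)\ge 1=a$ because $p\mid|G|$, and $k_{p'}(G)\ge p>p-1=b$, so equality fails in the second inequality; since $|V|\ge p^3$ forces $G\ne C_p\rtimes C_{p-1}$, part~(i) of Theorem~\ref{main} holds for $G$, contradicting that $G$ is a counterexample. The step I expect to be most delicate is the numerical one: securing a clean inequality of the shape $\pi_e(n)\ge c\,p(n)$ and quoting a lower bound for $p(n)$ with exponent $2$ in $\sqrt n$. That exponent is safely below the true growth rate $\pi\sqrt{2/3}\approx 2.57$ of $\log p(n)/\sqrt n$, which is exactly what leaves room for the constant $112$; note however that a merely sub-exponential bound such as $p(n)\gg 2^{\sqrt n}$ would not be enough. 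The bookkeeping for chief factors with $t\ge 2$, by contrast, is absorbed uniformly by the multiset estimate $\binom{k^{\ast}(A_n)+t-1}{t}\ge k^{\ast}(A_n)$.
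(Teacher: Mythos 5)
Your proposal is correct and follows essentially the same route as the paper: reduce to $k_{p'}(G)=k(H)\ge k^{*}(A_n)$, bound $k^{*}(A_n)$ from below in terms of the partition function, and conclude $k_{p'}(G)\ge e^{2\sqrt n}/112\ge p$. The two small deviations are: you re-derive the inequality $k(H)\ge k^{*}(A_n)$ via a chief factor $A_n^t$ and the multiset count $\binom{k^{*}(A_n)+t-1}{t}$, where the paper simply cites Pyber's Lemma 2.5; and you compute $k^{*}(A_n)=\pi_e(n)$ exactly and show $\pi_e(n)\ge p(n)/4$ via the generating function $\prod_{k\text{ odd}}(1-x^k)$, whereas the paper uses the cruder (by a factor of $2$) chain $k^{*}(A_n)\ge k(A_n)/|\Out(A_n)|\ge \pi(n)/8$ obtained from $k(A_n)\ge k(S_n)/2$ — both of which feed into the same constant $112$ once the partition-function lower bound (the paper's \cite[Corollary 3.1]{Maroti2003}, giving $\pi(n)\ge e^{2\sqrt n}/14$) is invoked. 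The step you flagged as delicate is exactly the one the paper outsources to that citation, and the bound you posit ($p(n)>e^{2\sqrt n}/28$) is weaker than the cited one, so your argument goes through.
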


\begin{proof}
Assume that $S = A_n$ is an alternating composition factor of $H$ with $n \geq (\ln(112)+\ln p)^2/4$. Since $|\mathrm{Out(S)}| \leq 4$, we have $k^{*}(S) \geq k(S)/4$. Since $S$ is a normal subgroup of index $2$ in $S_n$, we have $k(S) \geq \pi(n)/2$ where $\pi(n)$ denotes the number of partitions of $n$. We have $k(H) \geq k^{*}(S)\geq 4$ by \cite[Lemma 2.5]{pyber1992}. Thus, by Lemma \ref{l1} and by \cite[Corollary 3.1]{Maroti2003} we have
$$
k_{p'}(G) =k(H)\geq \max\{4,\frac{\pi(n)}{8}\}\geq \max\{4,\frac{e^{2\sqrt{n}}}{112}\}\geq p
$$
where the last inequality holds provided that $n \geq (\ln(112)+\ln p)^2/4$.
\end{proof}

Let an imprimitivity decomposition of the irreducible $H$-module $V$ be $V_{1} + \cdots + V_{t}$. For each $i$ with $1 \leq i \leq t$, the vector space $V_i$ is a primitive $H_i$-module where $H_i$ is the stabilizer of $V_i$ in $H$. The group $H$ acts transitively on the set $\{ V_{1}, \ldots , V_{t} \}$. Let the kernel of this action be $B$. The factor group $H/B$ may be considered as a transitive permutation group of degree $t$. Let $m$ denote the minimal degree of a non-abelian alternating composition factor of $H/B$, provided that such exists, otherwise $m=4$. We have $|H/B| \leq m!^{(t-1)/(m-1)}$ by \cite[Corollary 1.5]{M1}. 

%By the proof of Step 6 we have $k_{p'}(G)\geq k_{p'}(H) \geq \max\{4,\pi(m)/8\}$. If this is at least $p/2$, then there is nothing to %show. Note that we may assume that $p/2 \leq 60^{4}/2$. Now $\max\{4,\pi(m)/8\} \geq 60^{4}/2$ provided that $m \geq 90$. We %may thus assume that $m \leq 89$ and $|H/B| < 36^{t-1}$.   

The group $B$ may be considered as a subgroup of $B_{1} \times \cdots \times B_{t}$ for isomorphic groups $B_{1}, \ldots , B_{t}$ such that $B$ projects onto each factor $B_i$. Moreover, for each $i$ with $1 \leq i \leq t$, the group $B_i$ may be considered as a normal subgroup in a primitive linear group acting on $V_i$. 

For each $i$ with $1 \leq i \leq t$, let $A_i$ be a largest abelian normal subgroup in $B_i$. Let the index of $A_i$ in $B_i$ be $f$. Let $C$ be the abelian normal subgroup of $B$ consisting of elements $(b_{1}, \ldots , b_{t}) \in B_{1} \times \cdots \times B_{t}$ with the property that $b_{i} \in A_{i}$ for all $i$ with $1 \leq i \leq t$. We claim that $|B:C| \leq f^{t}$. Let $E = B_{1} \times \cdots \times B_{t}$ and let $D = A_{1} \times \cdots \times A_{t}$. Observe that $D \cap B = C$ and $D$ is normalized by $B$. Now $B/C =  B/(D \cap B) \cong DB/D \leq E/D$. But $|E/D| = f^{t}$.  

\begin{lemma}
\label{meta}	
Let $G = HV$ be a counterexample to Theorem \ref{main} with a prime $p$. Fix $i$ with $1 \leq i \leq t$. The group $H_{i}/C_{H_{i}}(V_{i})$ is not a subgroup of $\Gamma L(1,K) \leq \mathrm{GL}(n/t,F)$ for any field extension $K$ of the prime field $F$ of order $p$.
\end{lemma}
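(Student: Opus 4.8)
The plan is to suppose the contrary and derive that $G$ satisfies Theorem~\ref{main}, contradicting the fact that $G=HV$ is a counterexample. So I would assume $H_i/C_{H_i}(V_i)\le\Gamma L(1,K)$ for some extension $K$ of $F$ with $[K:F]=n/t=:d$. Because $H$ is transitive on $\{V_1,\dots,V_t\}$, the stabilizers $H_j/C_{H_j}(V_j)$ are all conjugate to $H_i/C_{H_i}(V_i)$, so after identifying each $V_j$ with $K$ the group $H$ embeds into $\Gamma L(1,K)\wr(H/B)\le\Gamma L(1,K)\wr S_t$ in its imprimitive action on $V\cong K^t$. Since $\Gamma L(1,K)$ is transitive on $K^\times$, this wreath product has exactly $t+1$ orbits on $K^t$ (indexed by the number of nonzero coordinates), whence $k_p(G)=n(H,V)-1\ge t\ge2$; and $k_{p'}(G)=k(H)\ge2$ by Lemma~\ref{??}, as $H\ne1$. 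Because $|V|\ge p^3$ the group $G$ cannot be of the exceptional form $C_p\rtimes C_b$, so it is enough to exhibit a factorization $p-1=ab$ with $k_p(G)\ge a$ and $k_{p'}(G)\ge b$; using that $p$ is odd and that $k_p(G),k_{p'}(G)\ge2$, this follows as soon as $\max\{n(H,V)-1,\ k(H)\}\ge(p-1)/2$ (take $\{a,b\}=\{2,(p-1)/2\}$).

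Next I would set $Q=|H:C|$, where $C\trianglelefteq B$ is the abelian normal subgroup produced just before the lemma, and record three counting facts. First, every $H$-class inside $C$ has size at most $Q$ (since $C\le C_H(c)$ for $c\in C$), so $k(H)\ge|C|/Q$. Second, every $C$-orbit on $V$ has size at most $|C|$, so $n(C,V)\ge|V|/|C|$. Third, every $H$-orbit on $V$ splits into at most $Q$ orbits of $C$, so $n(H,V)\ge n(C,V)/Q$. Combining these with $\max(x,y)\ge\sqrt{xy}$ gives $\max\{n(H,V),\ k(H)\}\ge Q^{-1}\sqrt{n(C,V)\cdot|C|}\ge Q^{-1}\sqrt{|V|}=p^{n/2}/Q$. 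Since $p^{n/2}/Q\ge(p+1)/2$ already forces $\max\{k_p(G),k_{p'}(G)\}\ge(p-1)/2$, and $2p^{n/2}/(p+1)>p^{(n-2)/2}$, the whole lemma reduces to proving the single inequality $Q=|H:C|\le p^{(n-2)/2}$.

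To bound $Q=|H:B|\cdot|B:C|$ I would argue as follows. The subgroup $B_i\cap K^\times$ is abelian and normal in $B_i\le\Gamma L(1,K)$, hence contained in a largest abelian normal subgroup $A_i$ of $B_i$, so $f=|B_i:A_i|\le|B_i:B_i\cap K^\times|\le d$, which gives $|B:C|\le f^t\le(n/t)^t$. For the other factor, $|H:B|\le\min\{t!,\ m!^{(t-1)/(m-1)}\}$ by \cite[Corollary 1.5]{M1}, where $m\ge4$ is the least degree of an alternating composition factor of $H/B$. By Lemma~\ref{newlemma} we may assume $m<(\ln112+\ln p)^2/4$, for otherwise $k(H)\ge p$ and we are already done; thus $m$ is bounded purely in terms of $p$.

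Finally I would verify the estimate $\min\{t!,\ m!^{(t-1)/(m-1)}\}\cdot(n/t)^t\le p^{(n-2)/2}$ for all admissible data $n=dt\ge3$, $t\ge2$, $p\ge47$, $4\le m<(\ln112+\ln p)^2/4$; this numerical bookkeeping is where the real work lies, and is the main obstacle. For small $t$ one uses $|H:B|\le t!$ (and here the transitive groups of small degree are explicitly known, so only finitely many $t$ need attention); when $H/B$ nonetheless has a symmetric or alternating section $S_m$ or $A_m$ with $m$ close to $t$, one instead argues directly that $k_{p'}(G)=k(H)\ge k(A_m)\ge\pi(m)/2\ge(p-1)/2$. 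For large $t$ one writes $m!^{(t-1)/(m-1)}=(m!^{1/(m-1)})^{t-1}$ and uses the bound on $m$ from Lemma~\ref{newlemma}, so the left-hand side grows at most like $c(p)^t$ with $c(p)\le p^{1/2}$ once $m$ is not within a short window below the threshold $(\ln112+\ln p)^2/4$; the exponential growth of $p^{(n-2)/2}$ in $n=dt$ then wins. The residual configurations — bounded $n$, or $m$ in that short window with $p$ near $47$ — form a finite list and are dispatched by hand or with GAP \cite{GAP}, using exact values of $k(A_m)$ where needed. Once $Q\le p^{(n-2)/2}$ is established in all cases, the contradiction is complete, and so $H_i/C_{H_i}(V_i)$ is not a subgroup of $\Gamma L(1,K)$.
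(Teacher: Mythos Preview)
Your approach is essentially the paper's, with one cosmetic twist and two minor slips.

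Both proofs bound $Q=|H:C|$ by $m!^{(t-1)/(m-1)}\cdot(n/t)^t$ and invoke Lemma~\ref{newlemma} to control $m$. The paper then argues: if $n(H,V)\ge p$ we are done; otherwise $|H|\ge p^{n-1}$ forces $|C|\ge p^{n-1}/Q$, whence $k_{p'}(G)=k(H)\ge|C|/Q + k^*(A_m)\ge p^{n-1}/Q^2+k^*(A_m)$, and the right side is shown to exceed $(p-1)/2$ by a short case split on $n\ge2t$ versus $n=t$. Your geometric-mean inequality $\max\{n(H,V),k(H)\}\ge p^{n/2}/Q$ is an elegant repackaging of the same dichotomy: it yields the same threshold $Q\lesssim p^{(n-2)/2}$, and where that fails your fallback via $k^*(A_m)$ plays exactly the role of the paper's additive term. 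The residual numerics are essentially identical in either version.

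Two small corrections. First, the lemma is applied in Section~11 with $t\ge1$, so you cannot simply assert $t\ge2$; for $t=1$ note that $Q\le n\le p^{(n-2)/2}$ (valid for $n\ge3$, $p\ge47$) and take $k_p(G)\ge3$ from Section~\ref{sec:small_primes}. Second, when $A_m$ is only a composition factor of $H$ you obtain $k(H)\ge k^*(A_m)$ by \cite[Lemma~2.5]{pyber1992}, not $k(H)\ge k(A_m)$; use $k^*(A_m)$ in your fallback argument.
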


\begin{proof}
Fix $i$ with $1 \leq i \leq t$. Assume that $H_{i}/C_{H_{i}}(V_{i})$ is metacyclic. Then $B_i$ is metacyclic. We have $|B_{i} : A_{i}| = f$ and $n \geq t f$ (since we view $V$ and $V_i$ over the field of size $p$). The index of the abelian subgroup $C$ in $H$ satisfies $$|H:C| \leq |H/B||B/C| \leq m!^{(t-1)/(m-1)} \cdot f^{t} \leq m!^{(t-1)/(m-1)} \cdot (n/t)^{t}.$$ 

Since $H$ has less than $p$ orbits on $V$, it follows that
%\begin{equation}
%  \label{eq_bounds_on_H}
$$
p^{n-1} \leq |H| \leq m!^{(t-1)/(m-1)} \cdot (n/t)^{t} \cdot |C|.
$$
%  \end{equation} 

We have $k_{p'}(G) = k(H) \geq k^{*}(A_{m}) \geq \max\{4,\pi(m)/8\}$ by the proof of Lemma \ref{newlemma}. If this is at least $p/2$, then there is nothing to show. Note that we may assume that $p/2 \leq 60^{4}/2$. Now $\max\{4,\pi(m)/8\} \geq 60^{4}/2$ provided that $m \geq 90$. We may thus assume that $m \leq 89$ (and $|H/B| < 36^{t-1}$).  

For $m\geq 5$, we have
\begin{equation}
  \label{eq11}
  k_{p'}(G) = k(H) \geq \frac{|C|}{|H:C|} + k^{*}(A_{m}) \geq \frac{p^{n-1}}{m!^{2(t-1)/(m-1)} \cdot (n/t)^{2t}} + k^{*}(A_{m}).
\end{equation}
Since $k^{*}(A_{m})\geq \max\{4,\pi(m)/8\}$, we have
\begin{equation}
  \label{eq12}
  k_{p'}(G) \geq \frac{p^{n-1}}{m!^{2(t-1)/(m-1)}\cdot (n/t)^{2t}}+\max\{4,\pi(m)/8\}.
\end{equation}
For $m=4$, we have 
\begin{equation}
  \label{eq13}
  k_{p'}(G) = k(H) \geq \frac{|C|}{|H:C|} \geq \frac{p^{n-1}}{24^{2(t-1)/3} \cdot (n/t)^{2t}}.
\end{equation}
Notice that, since $t\mid n$, we have $(n/t)^{2t}\leq 3^{2n/3}$. Indeed, let $a=n/t$ then the inequality follows from $a^{1/a}\leq 3^{1/3}$ (since $a^{1/a}$ decreases for $a\geq 4$). Notice also that we may assume $k_p(G) > 3$ (by \cite[Theorem 1.1 and Section 2]{HungSambaleTiep}). We may assume $p\geq 47$.

%Moreover, since we assume that $8\leq p^{n-1}\leq |H|$, we have $k_{p'}(G)\geq k_{p'}(H)> 3$. Thus, we may assume $p\geq 11$.

Let $n \geq 2t$. Let $5\leq m\leq 89$. Since $n\geq m\geq 5$, a calculation shows that
$$
k_{p'}(G)\geq\frac{p^{n-1}}{89!^{2(t-1)/88} \cdot 3^{2n/3}}> \frac{p^{n-1}\cdot 35^2}{74^n}  \geq \frac{p-1}{2}
$$
for $p\geq 97$, where the first inequality holds since $m\leq 89$. Let $p\leq 89$. By a GAP \cite{GAP} calculation, we have $k^{\ast}(A_m)> 44\geq p-1$ for $m\geq 13$. By \eqref{eq11}, we may assume that $m\leq 12$. Then, \eqref{eq11} gives
$$
k_{p'}(G) \geq \frac{p^{n-1}}{12!^{2(t-1)/11}\cdot 3^{2n/3}}> \frac{p^{n-1}\cdot 6^2}{13^n}\geq\frac{p-1}{2}
$$
for $p\geq 17$.

Let $n=t$. Let $5 \leq m \leq 89$. Observe that $n \geq m$. Thus, \eqref{eq12} and a GAP \cite{GAP} calculation give
$$
k_{p'}(G) \geq \frac{p^{m-1}}{m!^{2}} + \max\{4,\pi(m)/8\}> \frac{p-1}{2}
$$
for $p \geq 67$ or $m\geq 16$. Let $p \leq 61$ and $m\leq 15$. Then, by \eqref{eq11} we have
$$
k_{p'}(G) \geq   \frac{p^{n-1}}{m!^{2}} + k^{*}(A_{m})> \frac{p-1}{2}
$$
for all $m$ and $p\geq 47$.
\end{proof}

\section{Groups with few orbits}

Let $p$ be a prime at least $47$. Let $F$ be the field of order $q$ and characteristic $p$. Let $V$ be an absolutely irreducible, primitive and faithful $FH$-module for a finite group $H$ of order coprime to $p$. Let $|V| = p^{n} = q^{d}$ where $d = \dim_{F}(V)$. We proceed to describe the cases when $|H| > |V|/p$. Otherwise, $p \leq |V|/|H| \leq n(H,V)$.  

In the first step, assume that every irreducible $N$-submodule of $V$ is absolutely irreducible for any normal subgroup $N$ of $H$. Let $H$ be different from a cyclic group. We follow the proof of \cite[Theorem 4.1]{GMP} with $H:= A = G$ in that notation. Let $J_1, \ldots , J_k$ denote the distinct normal subgroups of $H$ that are minimal with respect to being noncentral in $H$. Let $J = J_{1} \cdots J_{k}$ be the central product of these subgroups. The group $H/(Z(H)J)$ embeds into the direct product of the outer automorphism groups $O_i$ of the $J_i$. Let $W$ be an irreducible constituent for $J$. We have $W \cong U_{1} \otimes \cdots \otimes U_{k}$ where $U_i$ is an irreducible $J_i$-module. If $J_i$ is the central product of $t$ copies of a quasisimple group $Q$, then $\dim U_{i} \geq m^{t}$ where $m$ is the dimension of the nontrivial module for $Q$ whose $t$-th tensor power is $U_i$. If $J_{i}$ is a group of symplectic type with $J_{i}/Z(J_{i})$ of order $r^{2a}$ for a prime $r$ and an integer $a$ then $\dim U_{i} = r^{a}$. 

If a subgroup $J_i$ is quasisimple, then $|Z(H) J_{i}||O_{i}| < 3 |U_{i}|$ by Proposition \ref{prop:k_star}. Moreover, if $d_{i} = \dim(U_{i})$ is different from $2$, then $|Z(H) J_{i}||O_{i}| < |U_{i}|/6$ and if $d_{i} \geq 7$ then $|Z(H) J_{i}||O_{i}| < |U_{i}|/q$. If $J_i$ is the central product of $t$ copies of a quasisimple group $Q$ and $m$ is the dimension of the nontrivial module for $Q$ whose $t$-th tensor power is $U_i$, then $|Z(H) J_{i}||O_{i}| < 3^{t} q^{tm} p^{t-1}$ by Proposition \ref{prop:k_star} and by \cite{pp}. It follows that $|Z(H)J_{i}||O_{i}| < |U_{i}|/q$ for $t \geq 4$. Let $m = 2$. If $t = 2$, then $|Z(H)J_{i}||O_{i}| < 2 (q-1) 120^{2} < |U_{i}|$. If $t = 3$, then $|Z(H)J_{i}||O_{i}| < 6 (q-1) 120^{3} < |U_{i}|$. If $m \geq 3$, then $|Z(H)J_{i}||O_{i}| < q^{tm}/6^{t} < |U_{i}|/q$. In general, if $d_{i} \geq 9$, then $|Z(H)J_{i}||O_{i}| < |U_{i}|/q$.    

If $J_{i}$ is a group of symplectic type with $J_{i}/Z(J_{i})$ of order $r^{2a}$ for a prime $r$ and an integer $a$, then $|Z(H)J_{i}||O_{i}| < |U_{i}|/q$, unless $(r,a) \in \{ (2,1), (3,1), (2,2) \}$ by the first two paragraphs of the proof of \cite[Proposition 5.2]{Attila2016}. In particular, if $d_{i} \geq 5$, then $|Z(H)J_{i}||O_{i}| < |U_{i}|/q$, and $|Z(H)J_{i}||O_{i}| < |U_{i}|$ in all cases. Moreover, $|Z(H)J_{i}||O_{i}| < |U_{i}|/3$ for $d_{i} = 3$. 

We have $|H| \leq \prod_{i=1}^{k} |Z(H)J_{i}||O_{i}|$ and so $$|H| \leq 3^{k} \prod_{i=1}^{k} |U_{i}| = 3^{k} q^{\sum_{i=1}^{k}d_{i}}.$$ Assume, without loss of generality, that $2 \leq d_{1} \leq \cdots \leq d_{k}$. We may write $|H| \leq 3^{k} q^{k d_{k}}$. This is less than $q^{2^{k-1}d_{k} - 1} \leq q^{(\prod_{i=1}^{k} d_{i}) -1} = |U_{i}|/q$ provided that $k \geq 3$. If $k =1$ and $d \geq 9$, then $|H| < |V|/q$ from the above. Let $k=2$. We have $$|H| \leq 9 \cdot  q^{d_{1} + d_{2}} < q^{d_{1}d_{2}-1} = |W|/q \leq |V|/q$$ provided that $d \geq 8$. Since both $d_{1}$ and $d_{2}$ are at least $2$, the remaining cases are $d_{1}=2$ and $d_{2} \in \{ 2, 3 \}$. If $d_{1} = 2$ and $d_{2} = 3$, then $|H| < |U_{1}||U_{2}| = q^{5} \leq |V|/q$. Let $d_{1} = d_{2} = 2$. If both $J_1$ and $J_2$ are of symplectic type, then $|H| \leq 24^2 (q-1) < |V|/q$. Let $J_1$ be nonsolvable. If $q > 120$, then $|H| \leq 120^2 (q-1) < |V|/q$. We are left with the primes $59$, $61$, $71$, $79$, $89$, $101$, $109$ by \cite[II, Hauptsatz 8.27]{huppert}. If $J_2$ is of symplectic type, then $|H| \leq 120 \cdot 24 \cdot (q-1) < |V|/q$. Let $J_2$ be nonsolvable. 

Let us conclude our finding. Assume that every irreducible $N$-submodule of $V$ is absolutely irreducible for any normal subgroup $N$ of $H$. Let $H$ be different from a cyclic group. Let $p \geq 47$. Let $H$ have order not divisible by $p$. Then $|H| < |V|/q$ unless possibly if $J = J_{1}$ and $H$ is as in Table \ref{possible_exceptions_V_over_p} (with $q=p$ unless $d_{1} =2$) or $J = J_1$ is of symplectic type and $d_{1}$ is $2$, $3$ or $4$, or $k=2$, $d_{1} = d_{2} = 2$, both $J_1$ and $J_2$ are nonsolvable, and $q = p \in \{  59, 61, 71, 79, 89, 101, 109 \}$.  

We follow the proof of \cite[Theorem 4.2]{GMP} with $H:= A = G$. From the previous paragraph, we find that if $V$ is a primitive and faithful $H$-module, $p \geq 47$ and $H$ has order not divisible by $p$, then $H \leq \mathrm{\Gamma L}(1,Q)$ for some field extension $Q$ of $F$, or $H$ is almost quasisimple as in Table \ref{possible_exceptions_V_over_p} (with $\dim V \geq 3$), or $|H| \leq |V|/p$, or $H$ has two normal subgroups $J_1$ and $J_2$ which are minimal with respect to being noncentral in $H$, both $J_1$ and $J_2$ are as in the first row of Table \ref{possible_exceptions_V_over_p} and $q = p \in \{  59, 61, 71, 79, 89, 101, 109 \}$, or there is a divisor $e$ of $d$ such that $H$ contains a normal subgroup $L = H \cap \mathrm{GL}(d/e,q^{e})$ of index $e$ in $H$ acting primitively and irreducibly on a vector space $U$ over the extension field of $F$ of order $q^e$ such that (i) $L$ is almost quasisimple as in the first row of Table \ref{possible_exceptions_V_over_p} with $d/e$ in place of $d$, $q^{e}$ in place of $q$, $U$ in place of $V$ and $L$ in place of $H$ or (ii) $L$ has a unique normal subgroup $J$ which is minimal with respect to being noncentral in $L$, $J$ is of symplectic type, it acts absolutely irreducibly on $U$ and $d/e \in \{ 2, 3, 4 \}$. 

\section{Proof of Theorem \ref{main}}

In this section we finish the proof of Theorem \ref{main}. 

\medskip

{\bf Setup.} We may suppose by Section 6 that $G = HV$ where $V$ is an elementary abelian normal $p$-subgroup in $G$ and $H$ is a subgroup of $G$ of order not divisible by $p$. Moreover, $V$ is a faithful and irreducible $FH$-module of order at least $p^{3}$ for a finite field $F$ of characteristic $p$. We may suppose that $p \geq 47$ by Section 7. Let $q = |F|$ and let $|V| = q^{d}$ for some integer $d$.  

As before, let an imprimitivity decomposition of the irreducible $H$-module $V$ be $V_{1} + \cdots + V_{t}$ with $t \geq 1$. For each $i$ with $1 \leq i \leq t$, the vector space $V_i$ is a primitive (and irreducible) $H_i$-module where $H_i$ is the stabilizer of $V_i$ in $H$. The group $H$ acts transitively on the set $\{ V_{1}, \ldots , V_{t} \}$. Let the kernel of this action be $B$. The factor group $H/B$ may be considered as a transitive permutation group of degree $t$. Let $m$ denote the minimal degree of a non-abelian alternating composition factor of $H/B$, provided that such exists, otherwise $m=4$. We have $|H/B| \leq m!^{(t-1)/(m-1)}$ by \cite[Corollary 1.5]{M1}. Moreover, $m < {(\ln(112) + \ln p)}^{2}/4$ by Lemma \ref{newlemma}.

Put $W = V_1$ and let $K = H_1$. The index of $K$ in $H$ is $t$. We will also suppose that $k_{p}(G) \geq 3$ by the beginning of Section 7. 

\medskip

{\bf Using Section 10.} In order to prove Theorem \ref{main}, we use Section 10, or more precisely, the last paragraph of Section 10 to collect information about the group $K/C_{K}(W)$.

If $K/C_{K}(W) \leq \mathrm{\Gamma L}(1,Q)$ for some field extension $Q$ of the underlying field $F$, then Theorem \ref{main} holds by Lemma \ref{meta}. If $K/C_{K}(W)$ is almost quasisimple as in Table \ref{possible_exceptions_V_over_p} with $\dim W \geq 3$, then Theorem \ref{main} holds by Proposition \ref{prop:imprimitive_case}. If $|K/C_{K}(W)| \leq |W|/p$ (and $K/C_{K}(W) \not= 1$), then the number of nontrivial orbits of $K$ on $W$ is at least $p-1$ and so $k_{p}(G) \geq p-1$ and $k_{p'}(G) \geq 2$. 
 
In order to prove Theorem \ref{main}, we may thus assume that one of the following holds for the group $K/C_{K}(W)$:
\begin{enumerate}
\item [{\bf Case (1).}] $K/C_{K}(W)$ has two normal subgroups $J_1$ and $J_2$ which are minimal with respect to being noncentral in $K/C_{K}(W)$, both $J_1$ and $J_2$ are as in the first row of Table \ref{possible_exceptions_V_over_p} and $q = p \in \{  59, 61, 71, 79, 89, 101, 109\}$.

\item [{\bf Case (2).}] There is a divisor $e$ of $d/t$ such that $K/C_{K}(W)$ contains a normal subgroup $L = (K/C_{K}(W)) \cap \mathrm{GL}(d/(te),q^{e})$ of index $e$ in $K/C_{K}(W)$ acting primitively and irreducibly on a vector space $U$ over the extension field of $F$ of order $q^e$ such that 

\subitem {\bf(i).} $L$ is almost quasisimple as in the first row of Table \ref{possible_exceptions_V_over_p} with $d/(te)$ in place of $d$, $q^{e}$ in place of $q$, $U$ in place of $V$ and $L$ in place of $H$ or 

\subitem {\bf(ii).} $L$ has a unique normal subgroup $J$ which is minimal with respect to being noncentral in $L$, $J$ is of symplectic type, it acts absolutely irreducibly on $U$ and $d/(te) \in \{ 2, 3, 4 \}$. 
\end{enumerate}

\medskip

{\bf Case (1).} We prove Theorem \ref{main} in Case (1). We have $\dim_{F}(W) \geq 4$. Let $t \geq 2$. Since the number of orbits of $K$ on $W$ is at least $|W|/|K/C_{K}(W)| \geq p^{3}/120^{2}$, we find that $$k_{p}(G) \geq \frac{(p^{3}/120^{2})((p^{3}/120^{2}) +1)}{2} -1 > 104 > (p-1)/2.$$ Since $G$ is nonsolvable, $k_{p'}(G) \geq 3$ by Burnside's theorem. We may thus suppose that $t = 1$. In this case $K = H$ and $W = V$. Let $C$ be the center of $H$. The factor group $H/C$ contains $A_{5} \times A_{5}$ as a normal subgroup and is contained in $S_{5} \times S_{5}$ therefore $k(H/C) \geq 16$ and so $k_{p'}(G) = k(H) \geq |C| + 15$. Since $|H| \leq 120^{2} |C|$, we have $k_{p}(G) \geq (p^{4}-1)/(120^{2} |C|)$. If $|C| \leq 14$, then $k_{p}(G) \geq p-1$. We may thus assume that $|C| \geq 15$ and so $k_{p'}(G) \geq 30$. We have $k_{p}(G) \geq 3$ (by \cite[Theorem 1.1 and Section 2]{HungSambaleTiep}). This deals with the primes $p$ in $\{ 59, 61, 79 \}$. The integer $|C|$ must divide $p-1$. If $p=71$, then $|C|$ is divisible by $35$ and so $k_{p'}(G) \geq 50$ and $k_{p}(G) \geq 3$. Finally, if $p \in \{ 89, 101, 109 \}$, then $k_p(G) \geq 49$ and $k_{p'}(G) \geq 30$ from the above. 

\medskip

{\bf Case (2)(i).} We prove Theorem \ref{main} in Case (2)(i). We start with a lemma which holds both in Case (2)(i) and in Case (2)(ii) when $d/(te) = 2$. 

\begin{lemma}
\label{lemmae}	
Use the notation of this section. Let $G$ be a finite group and let $p$ be a prime for which Case (2)(i) or Case (2)(ii) holds, latter if $d/(te) = 2$. We have the following. 
\begin{enumerate}
\item If $t=2$ and $q > 240$, then Theorem \ref{main} holds for $G$ and $p$.

\item If $t = 3$ and $q \geq 89$, then Theorem \ref{main} holds for $G$ and $p$.

\item If $t \in \{ 4, 5 \}$ and $q \not\in \{ 59, 61 \}$, then Theorem \ref{main} holds for $G$ and $p$.

\item If $t \geq 6$ and $q \geq 600$, then Theorem \ref{main} holds for $G$ and $p$.

\item If $e \geq 2$, then Theorem \ref{main} holds for $G$ and $p$.
\end{enumerate}	
\end{lemma}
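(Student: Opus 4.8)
The plan is to exploit the same two basic inequalities used throughout this part of the paper: on the $p$-side, $k_p(G) = n(H,V) - 1 \geq (|V|-1)/|H|$, and, when $V$ is imprimitive with $t$ blocks, the stronger bound $k_p(G) \geq \binom{k+t-1}{k-1} - 1$ where $k = n(K/C_K(W), W)$ is the number of orbits of the block stabilizer on a single block $W$; on the $p'$-side, $k_{p'}(G) = k(H) \geq 3$ by Burnside (since $H$ is nonsolvable in all of Case (2)), and more precisely $k_{p'}(G) \geq k(K/C_K(W)) \geq k^*(S)$ for the relevant socle $S$. The point of Case (2)(i), together with Case (2)(ii) when $d/(te)=2$, is that the primitive group $L$ acting on $U$ is tightly constrained: it is almost quasisimple with socle $\PSL_2(5)$ acting in dimension $\dim U = d/(te) \in \{2,3\}$ over $\mathbb{F}_{q^e}$, or (in (ii) with $d/(te)=2$) $L/Z(L)$ is contained in a group of symplectic type normalizer in dimension $2$. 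In every one of these situations $|L|$ is bounded by an absolute constant times $q^e$, specifically $|L| \leq 120\,(q^e - 1)$ for the $\PSL_2(5)$ cases in dimension $2$, and $|L| \leq c\, q^{e}$ with a similarly small $c$ in dimension $3$ and in the symplectic-type case; this is exactly the content of Table \ref{possible_exceptions_V_over_p} and Proposition \ref{prop:k_star}.

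With these constants in hand, each of the five items becomes an elementary estimate. For item (5) ($e \geq 2$): passing from $L \leq \GL(d/(te), q^e)$ back to $K/C_K(W) \leq \GL(d/t, q)$ costs only a factor of $e \leq \log_q(q^e)$ in the order, so $|K/C_K(W)| \leq e \cdot c\, q^e$, which is far smaller than $q^{de/(te)} = |W|^{1/t}$ once $q^e \geq q^2 \geq 47^2$; hence $k$ is large (comfortably $\geq p$), and then $k_p(G) \geq \binom{k+t-1}{k-1} - 1 \geq k - 1 \geq p - 1$ while $k_{p'}(G) \geq 2$, giving the factorization $a = p-1$, $b = 1$. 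For items (1)–(4) we take $e = 1$ and $W = U$ has dimension $d/t \in \{2,3\}$ over $\mathbb{F}_q$. Using $|K/C_K(W)| \leq 120(q-1)$ (dimension $2$) or the dimension-$3$ constant, we get $k \geq q^{d/t}/(120(q-1)) \geq q/120$ in the worst case $d/t = 2$; plugging this into $k_p(G) \geq \binom{k+t-1}{k-1} - 1 \geq \tfrac{k(k+1)}{2} - 1$ (for $t \geq 2$) yields $k_p(G) \geq \tfrac12 (q/120)(q/120 + 1) - 1$. The stated lower bounds on $q$ in each item — $q > 240$ for $t = 2$, $q \geq 89$ for $t = 3$, $q \notin \{59,61\}$ for $t \in \{4,5\}$, $q \geq 600$ for $t \geq 6$ — are precisely the thresholds beyond which this binomial coefficient exceeds $(p-1)/2 = (q-1)/2$, so that we may take $a = 2$, $b = (p-1)/2$ (recalling $k_{p'}(G) \geq 3 > 2$, so equality cannot occur). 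The increasing value of the $t$-dependent binomial coefficient is what lets the threshold on $q$ shrink as $t$ grows from $2$ to $5$; it then has to be raised again for $t \geq 6$ because the combinatorial gain is offset in the bookkeeping by the permutation factor $|H/B| \leq m!^{(t-1)/(m-1)}$, but since we are only asserting the conclusion for $q \geq 600$ that factor is easily absorbed.

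The main obstacle — and the reason the statement is phrased with these specific numerical thresholds rather than a single clean hypothesis — is the interplay between the small value of $\dim W = d/t$ (which can be as low as $2$, forcing $|W| = q^2$ and hence $k$ only of order $q/120$) and the small value of $t$ (which makes the binomial coefficient $\binom{k+t-1}{k-1}$ only quadratic in $k$ when $t = 2$). When both are simultaneously at their worst — $t = 2$, $d/t = 2$, $C = Z$ so the center is as large as possible — one needs $q$ genuinely large (hence $q > 240$) to push $\tfrac12(q/120)^2$ past $q/2$; this is the borderline case and where the bulk of the care goes. In the remaining items the larger $t$ (items (2)–(4)) or the field-extension slack (item (5)) gives enough room that the estimate is routine. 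In all items one also has to dispose of the possibility $t = 1$, but that falls under Case (2) with $K = H$ and is handled separately (it reduces to Proposition \ref{prop:k_star} or to the symplectic-type analysis), so within this lemma we may and do assume $t \geq 2$.
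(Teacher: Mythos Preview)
Your overall strategy—bounding the number $k$ of orbits of $K/C_K(W)$ on a single block $W$ and then using the binomial inequality $k_p(G)\geq\binom{k+t-1}{t}-1$—is sound and is indeed used elsewhere in the paper (e.g.\ in Proposition~\ref{prop:imprimitive_case}). However, it does \emph{not} deliver the numerical thresholds stated in the lemma. Take your own worst case $t=2$, $e=1$, $d/t=2$, $C=Z$: then $|K/C_K(W)|\leq 120(q-1)$, so $k\geq q^2/(120(q-1))\approx q/120$, and your bound gives $k_p(G)\geq\tfrac12(q/120)(q/120+1)-1$. For this to exceed $(p-1)/2=(q-1)/2$ you need roughly $(q/120)^2>q$, i.e.\ $q>120^2=14400$, not $q>240$. (Concretely, at $q=241$ your bound yields $k_p(G)\geq 2$, while you need $k_p(G)\geq 120$.) The same order-of-magnitude gap appears for $t=3,4,5$ and in your item~(5) argument.

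What you are missing is the trade-off that the paper exploits: when the center $C$ (and hence $Z(B)$) is large, the orbit count $k$ on a block is indeed small, but then $Z(B)$ contributes many conjugacy classes to $H$. The paper's proof runs a dichotomy on $|Z(B)|$: if $|Z(B)|\geq t!\,e^t(p-1)$ then $k_{p'}(G)=k(H)\geq |Z(B)|/(t!\,e^t)\geq p-1$ and one takes $(a,b)=(1,p-1)$; otherwise $|Z(B)|$ is small, hence $|H|\leq |H/B|\cdot|B/Z(B)|\cdot|Z(B)|<t!\cdot(120e)^t\cdot t!\,e^t(p-1)$ is globally small, and one gets $k_p(G)\geq(|V|-1)/|H|\geq p-1$ directly (taking $(a,b)=(p-1,1)$). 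It is this two-sided argument that produces the threshold $q>240$ for $t=2$ (from $q^4>57600(p-1)^2$) and the other stated bounds. Your single-sided approach, which always aims for $k_p(G)\geq(p-1)/2$, cannot reach these thresholds because it throws away the information in $|Z(B)|$ on the $p'$-side.
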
	 

\begin{proof}
We give the proof in Case (2)(i). In Case (2)(ii) when $d/(te) = 2$ the proof is the same the only difference being that $120$ changes to the lower number $24$.  
	
Let $t=2$. If $|Z(B)| \geq 2e^2(p-1)$, then $k_{p'}(G) = k(H) \geq |Z(B)|/(2e^{2}) \geq p-1$ and so Theorem \ref{main} holds. Otherwise, $|H| < 2 \cdot 2e^{2}(p-1) 120^2 e^2 < (q^{4e}-1)/(p-1)$, provided that $q > 240$ or $e \geq 2$ (and $q \geq p \geq 47$), giving $k_{p}(G) \geq p-1$.   

Let $t=3$. If $|Z(B)| \geq 6e^3(p-1)$, then $k_{p'}(G) = k(H) \geq |Z(B)|/(6e^{3}) \geq p-1$ and so Theorem \ref{main} holds. Otherwise, $|H| < 6 \cdot 6e^{3}(p-1) 120^3 e^3 < (q^{6e}-1)/(p-1)$, provided that $q > 89$ or $e \geq 2$ (and $q \geq p \geq 47$), giving $k_{p}(G) \geq p-1$.

Let $t=4$. If $|Z(B)| \geq 24e^4(p-1)$, then $k_{p'}(G) = k(H) \geq |Z(B)|/(24e^{4}) \geq p-1$ and so Theorem \ref{main} holds. Otherwise, $|H| < 24 \cdot 24e^{4}(p-1) 120^4 e^4 < (q^{8e}-1)/(p-1)$, provided that $q \not\in \{ 59, 61 \}$ or $e \geq 2$ (and $q \geq p \geq 47$), giving $k_{p}(G) \geq p-1$. 

The case $t=5$ is treated as in the previous paragraph. 

Let $t \geq 6$. We have $|H/B| \leq m!^{(t-1)/(m-1)}$ and $m < {(\ln(112) + \ln p)}^{2}/4$ by the Setup paragraph above. If  $$|Z(B)| \leq \Big(\frac{ (\ln (p) + \ln (112))^{2} }{4} \Big)^{t-1}  e^{t}(p-1),$$ then Theorem \ref{main} holds. Otherwise, $$|H| <  \Big(\frac{ (\ln (p) + \ln (112))^{2} }{4} \Big)^{2t-2} e^{t}(p-1) 120^t e^t < (q^{2et}-1)/(p-1)$$ provided that $q \geq 600$ or $e \geq 2$. 
\end{proof}	

We may therefore suppose that $e = 1$. We may also suppose that $p$ is congruent to $\pm 1$ modulo $10$ by \cite[II, Hauptsatz 8.27]{huppert}.

Let $t = 2$. We may suppose that $q \leq 239$ and $|H| < 2 \cdot 2(p-1) 120^2$ by Lemma \ref{lemmae} and its proof. Since $A_5$ is a composition factor in $H$, we have $k(H) \geq k^{*}(A_5) = 4$ by \cite[Lemma 2.5]{pyber1992} and $k(H) \geq 5$ since $H \not= A_5$. Let $q = p = 239$. In this case one checks that $|H| < 2 (q^{4}-1)/(p-1)$. Let $q = 229$. We have $p-1 = 4 \cdot 3 \cdot 19$. It is sufficient to show that the number $n(H,V)-1$ of nontrivial orbits of $H$ on $V$ is larger than $57$. The inequality $|H| < q^{4}/58$ gives the result. Let $p = 211$. Then $p-1 = 2 \cdot 3 \cdot 5 \cdot 7$. It is sufficient to show that $n(H,V)-1$ is at least $70$. The inequality $|H| < q^{4}/71$ gives the result. Let $p = 199$. Then $p-1 = 2 \cdot 3^{2} \cdot 11$. It is sufficient to show that $n(H,V)-1$ is at least $66$. But $|H| < q^{4}/67$. Let $p = 191$. Then $p-1 = 2 \cdot 5 \cdot 19$. It is sufficient to show that $n(H,V)-1$ is at least $95$. But $|H| < q^{4}/96$. Let $p = 181$. Then $p-1 = 2^{2} \cdot 3^{2} \cdot 5$. It is sufficient to show that $n(H,V)-1$ is larger than $45$. But $|H| < q^{4}/46$. Let $p = 179$. Then $p-1 = 2 \cdot 89$. It is sufficient to show that $n(H,V)-1$ is at least $89$. But $|H| < q^{4}/90$. Let $p = 151$. Then $p-1 = 2 \cdot 3 \cdot 5^{2}$. It is sufficient to show that $n(H,V)-1$ is at least $50$. But $|H| < q^{4}/51$. Let $p = 149$. Then $p-1 = 2^{2} \cdot 37$. It is sufficient to show that $n(H,V)-1$ is larger than $37$. But $|H| < q^{4}/38$. Let $p = 139$. Then $p-1 = 2 \cdot 3 \cdot 23$. It is sufficient to show that $n(H,V)-1$ is at least $46$. But $|H| < (q^{4}-1)/46.96$. Let $p = 131$. Then $p-1 = 2 \cdot 5 \cdot 13$. It is sufficient to show that $n(H,V)-1$ is at least $26$. But $|H| < (q^{4}-1)/39.3$. Let $p = 109$. Then $p-1 = 2^{2} \cdot 3^{3}$. It is sufficient to show that $n(H,V)-1$ is larger than $9$. But $|H| < (q^{4}-1)/9$. Put $z = |Z(B)|$. Then $k(H) \geq z/2$ and $|H| \leq 2 120^{2}z = 28800 z$ thus $k_{p}(G) \geq (q^{4}-1)/(28800 z)$. Let $p = 101$. Then $p-1 = 2^{2} \cdot 5^{2}$. It is sufficient to show that $n(H,V)-1$ is larger than $20$. We may assume that $3613/z \leq (q^{4}-1)/(28800 z) \leq 20$, that is, $181 \leq z$. But then $k(H) \geq 181/2$. Theorem \ref{main} now follows from the assumption that $k_{p}(G) \geq 3$. Let $p=89$. It is sufficient to have $k_{p}(G) > 22$. We may assume that $2178/z \leq (q^{4}-1)/(28800 z) \leq 22$, that is, $99 \leq z$. But then $k(H) \geq 99/2$ and $k_{p}(G) \geq 3$. Let $p=79$. Here $p-1 = 78 = 3 \cdot 26$. It is sufficient to have $k_{p}(G) \geq 26$. We may assume that $(q^{4}-1)/(28800 z) \leq 26$, that is, $52.01 \leq z$. But then $k(H) \geq 27$ and $k_{p}(G) \geq 3$. Let $p = 71$. Here $p - 1 = 70 = 2 \cdot 5 \cdot 7$. It is sufficient to have $k_{p}(G) > 14$. We may assume that $(q^{4}-1)/(28800 z) \leq 14$, that is, $64 \leq z$. Thus $k(H) \geq 32$. But $k_{p}(G) \geq 3$. Let $p = 61$. Here $p-1 = 60 = 2^{2} \cdot 3 \cdot 5$. It is sufficient to get $k_{p}(G) > 12$. We may assume that $(q^{4}-1)/(28800 z) \leq 12$, that is, $41 \leq z$. We obtain $k(H) \geq 21$. But $k_{p}(G) \geq 3$. Let $p = 59$. Here $p-1 = 58 = 2 \cdot 29$. We need to show that $k_{p}(G) \geq 29$. We may assume that $(q^{4}-1)/(28800 z) \leq 29$, that is, $15 \leq z$. We also have that $z$ is even and dividing $2^{2} 29^{2}$. If $z \geq 2 \cdot 29$, then $k_{p'}(H) \geq 2 + (z-2)/2 \geq 30$. So $z$ is $2$ or $4$ which is a contradiction. 

Let $t=3$. We may suppose that $|H| < 6 \cdot 6(p-1) 120^3$ and $q=p$. Also $p$ is any of the four primes: $79$, $71$, $61$, $59$. Since $A_5$ is a composition factor of $H$ and $H \not= A_5$, we have $k(H) \geq 5$. Let $z = |Z(B)|$ as before. We have $k(H) \geq z/6$ and 
\begin{equation}
\label{displayed}	
k_{p}(G) \geq \frac{|V|-1}{|H|} \geq \frac{p^{6}-1}{6 \cdot z \cdot 120^{3}}.
\end{equation}
Let $p = 79$. Here $p-1 = 78 = 2 \cdot 3 \cdot 13$. Now $k_{p}(G) \geq 3$. It is sufficient to have $k_{p'}(G) > 26$. This is fine for $z > 156$. Let $z \leq 156$. Then $k_{p}(G) \geq 150$ by (\ref{displayed}). Let $p = 71$. Here $p-1 = 70 = 2 \cdot 5 \cdot 7$. It would be enough to have $k_{p}(G) \geq 15$. This holds for $z \geq 80$. If $z \leq 79$, then (\ref{displayed}) gives $k_{p}(G) \geq 156$. Let $p = 61$. Here $p-1 = 60 = 2^{2} \cdot 3 \cdot 5$. It is sufficient to prove $k_{p}(G) \geq 13$. This holds for $z \geq 78$. Let $z \leq 77$. Then $k_{p}(G) \geq 64$ by (\ref{displayed}). Let $p = 59$. Here $p-1 = 58 = 2 \cdot 29$. If $z \geq 6 \cdot 29 = 174$, then the result holds. Let $z \leq 173$. Now $z$ is even and divides $2^{3} 29^{3}$. If $29^{2} \mid z$, then $k_{p'}(H) \geq 2 + (z-2)/2$. Thus $z \leq 2^{2} \cdot 29$. In this case $(p^{6}-1)/(6 \cdot z \cdot 120^{3}) \geq 35$.

Let $t \in \{ 4, 5 \}$. We work with the upper bound for $|H|$ which follows from the proof of Lemma \ref{lemmae}. We have $q = p \in \{ 59, 61  \}$. Again, since $A_5$ is a composition factor of $H$ and $H \not= A_5$, we have $k(H) \geq 5$. Let $z$ be as before. 

Let $t = 4$. Let $p = 61$. Here $p-1 = 60 = 2^{2} \cdot 3 \cdot 5$. It is sufficient to show that $k_{p}(G) \geq 13$. This holds since $$k_{p}(G) \geq \frac{p^{8}-1}{|H|} \geq 26.$$ Let $p = 59$. We may assume that $z < 24 \cdot 58$. On the other hand, $z$ is even and divides $2^{4} 29^{4}$. So $z$ is $2$, $4$, $8$, $16$, $2 \cdot 29$, $4 \cdot 29$, $8 \cdot 29$, or $16 \cdot 29$ and so $$k_{p}(G) \geq \frac{p^{8}-1}{|H|} \geq \frac{p^{8}-1}{  24 \cdot 120^{4} \cdot 16 \cdot 29} \geq 63.$$   

Let $t=5$. Let $p = 61$. Here $p-1 = 60 = 2^{2} \cdot 3 \cdot 5$. It is sufficient to obtain $k_{p}(G) \geq 13$. This follows since $$k_{p}(G) \geq \frac{p^{10}-1}{|H|} \geq 35.$$ Let $p = 59$. We may assume that $z < 120 \cdot 58$. On the other hand, $z$ is even and divides $2^{5} 29^{5}$. So $z \leq 8 \cdot 29^2$ and so $$k_{p}(G) \geq \frac{p^{10}-1}{|H|} \geq \frac{p^{10}-1}{  120 \cdot  8 \cdot 29^2 \cdot 120^{5}} \geq 25.$$ Although this is not sufficient for our purpose, we are done unless $z = 8 \cdot 29^{2}$ and when $H/Z(B)$ is $S_{5} \wr S_{5}$. But this latter group has at least $(k(S_5))^{5}/120 > 140$ conjugacy classes. 

Let $t \geq 6$. Let $q = p \leq 600$. Let $m$ be the largest integer at least $4$ and less than $(\ln (p) + \ln (112))^{2}/4$. The maximum value is $30$. As before, we are done, unless $$|H| <  m!^{(2t-2)/(m-1)} (p-1) 120^t.$$ This is less than $q^{2t}/p$ unless $q \leq 232$. Let $q \leq 232$. The maximum value of $m$ is $25$. Applying the same argument, we get the result for $q \geq 198$. For $m \geq 22$ we have $k(H) \geq k^{*}(A_m) \geq \pi(m)/4 \geq 1002/4 > 196$.  We may thus assume that $m \leq 21$. For $170 \leq p \leq 197$ we get 
$$k_{p}(G) \geq \frac{|V|-1}{|H|} \geq \frac{q^{2t}-1}{21!^{(t-1)/10} (p-1) 120^t} \geq 161,$$ while $k_{p'}(G) \geq 2$. Looking at the list of numbers $k^{*}(A_{m})$ for $m \leq 21$, $k(H) \geq k^{*}(A_m) \geq 195$ for $m \geq 18$. Thus $m \leq 17$. Let $140 < p < 170$. Then $$k_{p}(G) \geq \frac{|V|-1}{|H|} \geq \frac{q^{2t}-1}{17!^{(t-1)/8} (p-1) 120^t} \geq 109,$$ while $k_{p'}(G) \geq 2$. But then $m \leq 16$. Let $130 < p < 140$. Then $$k_{p}(G) \geq \frac{|V|-1}{|H|} \geq \frac{q^{2t}-1}{16!^{(t-1)/7.5}(p-1) 120^t} > 79,$$ while $k_{p'}(G) \geq 2$. We have $k_{p}(G) \geq 3$. We are done for $m \geq 14$. Thus $m \leq 13$. Let $110 < p < 130$. Then $$k_{p}(G) \geq \frac{|V|-1}{|H|} \geq \frac{q^{2t}-1}{13!^{(t-1)/6} (p-1) 120^t} > 66,$$ while $k_{p'}(G) \geq 2$. Let $101 \leq p < 110$. Let $m=13$. Then $$k_{p}(G) \geq \frac{|V|-1}{|H|} \geq \frac{q^{2t}-1}{13!^{(t-1)/6}(p-1) 120^t} > 25,$$ while $k_{p'}(G) \geq k^{*}(A_{m}) = 52$. Here only the prime $107$ remains as $p-1 = 2 \cdot 53$. Since $H \not= A_m$, we have $k(H) \geq k^{*}(A_{m}) + 1$. The range for $p$ remains. Let $m=12$. Then $$k_{p}(G) \geq \frac{|V|-1}{|H|} \geq \frac{q^{2t}-1}{12!^{(t-1)/5.5} (p-1) 120^t } > 48,$$ while $k_{p'}(G) \geq k^{*}(A_{m}) = 40$. For $p = 107$ the $48$ on the right-hand side of the previous inequality changes to $91$. Let $m \leq 11$. Then $$k_{p}(G) \geq \frac{|V|-1}{|H|} \geq \frac{q^{2t}-1}{11!^{(t-1)/5} (p-1) 120^t} > 94,$$ while $k_{p'}(G) \geq 2$. Let $79 \leq p \leq 97$. We have $m \leq 12$. Let $m=12$. Then $$k_{p}(G) \geq \frac{|V|-1}{|H|} \geq \frac{q^{2t}-1}{12!^{(t-1)/5.5}(p-1) 120^t} > 3,$$ while $k_{p'}(G) \geq 40$ but surely there is one more class (for the prime $p = 83$ which we will only consider in Case (2)(ii)). Let $m=11$. Then $$k_{p}(G) \geq \frac{|V|-1}{|H|} \geq \frac{q^{2t}-1}{11!^{(t-1)/5} (p-1) 120^t} > 6,$$ while $k_{p'}(G) \geq 29$. This deals with all primes $p$ except $p = 83$, which is not congruent to $\pm 1$ modulo $10$ and therefore it is not considered here, but in Case (2)(ii) we may replace $120$ by $24$ and the relevant case will follow. Let $m=10$. Then $$k_{p}(G) \geq \frac{|V|-1}{|H|} \geq \frac{q^{2t}-1}{10!^{(t-1)/4.5} (p-1) 120^t} > 13,$$ while $k_{p'}(G) \geq 22$. Again, we have the same issue with $83$. Let $m = 9$. In this case $k_{p'}(G) \geq 16$, and we have the same issue with $83$. Let $m \leq 8$. Then $$k_{p}(G) \geq \frac{|V|-1}{|H|} \geq \frac{q^{2t}-1}{8!^{(t-1)/3.5} (p-1) 120^t} > 66.$$ The remaining primes are $47$, $53$ (in Case (2)(ii)), $59$, $61$, $67$, $71$, $73$ (in Case (2)(ii)). The previous computation shows that we may assume that $m \geq 9$. If $m \geq 12$, then $k_{p'}(G) \geq 40$ and we are done. Thus $m \in \{ 9, 10, 11 \}$. Observe that $t \geq m$. Then $$k_{p}(G) \geq \frac{|V|-1}{|H|} \geq \frac{q^{2t}-1}{11! (p-1) 120^t } > 132$$ for $t \geq 9$.  
 
\medskip 
 
{\bf Case (2)(ii).} Finally we prove Theorem \ref{main} in Case (2)(ii). 

Let $d/(te)=2$. The case $t=1$ was treated in Section 5. For $t \geq 2$ we apply Lemma \ref{lemmae} and we follow the proof in Case (2)(i) with $120$ replaced by $24$. 

Let $d/(te)=3$. We have $|L| \leq 3^2  |C| |\mathrm{Sp}(2,3)| = 216 |C|$ where $C = Z(L)$. This is less than $|W|/q^{e} = q^{2e}$ for $q^{e} \geq 217$ and Theorem \ref{main} follows in this case for all $t$. We may thus suppose that $q^{e} < 217$. Since $p \geq 47$, this implies that $q = p$ and $e = 1$. We get $p \leq 211$. Let $t=1$. Now $|C|$ divides $p-1$ and $k_{p'}(G) = k(H) \geq |C|$. On the other hand, $k_{p}(G) \geq (|W|-1) / (216|C|) > (p-1)/|C|$. Now $(|W|-1) / (216|C|) > 10$ and so $k_{p}(G) \geq (12 \cdot 13)/2 - 1 = 77$ for $t=2$ and $k_{p}(G) \geq (12 \cdot 13 \cdot 14)/6 - 1 = 363$ for $t \geq 3$. The case $t=2$ remains. If $p > 70$, then $(|W|-1) / (216|C|) > 23$ and so $k_{p}(G) \geq (24 \cdot 25)/2 - 1 \geq 299$. Thus $p \leq 67$. Since $3 \mid p-1$, the remaining primes are $61$ and $67$. We have $(|W|-1) / (216|C|) > 17$ and so $k_{p}(G) \geq (18 \cdot 19)/2 - 1 = 170$. 

Let $d/(te)=4$. We have $|L| \leq 2^4 |C| |\mathrm{Sp}(4,2)| = 11520 |C|$ where $C = Z(L)$. This is less than $|W|/q^{e} = q^{3e}$ for $q^{e} \geq 109$ and Theorem \ref{main} holds in this case for all $t$. We may thus suppose that $q^{e} < 109$. Since $p \geq 47$, this implies that $q = p$ and $e = 1$. Let $t=1$. We have $k_{p'}(G) = k(H) \geq |C|$ and $|C|$ is a divisor of $p-1$. It suffices to show that $k_{p}(G) \geq (q^{4}-1)/(11520 |C|) > (p-1)/|C|$, that is, that $(q^{4}-1)/11520 > p-1$. This is the case for $p \geq 47$. Let $t \geq 2$. We have $n(L, W) - 1 \geq (q^{4}-1)/(11520 (q-1)) > 9$ and so $k_{p}(G) \geq (10 \cdot 11)/2 - 1 = 54 > (p-1)/2$ (for $p < 109$).    

\bigskip
 
\begin{center}\bf  Acknowledgement
\end{center}

\bigskip
The authors thank Alexander Moret\'o for a helpful comment on an earlier version of this paper. Part of this work was done while the second and fourth authors visited the third author at the Alfr\'ed R\'enyi Institute of Mathematics in April 2024. They would like to thank the Institute for its hospitality. The second author was on sabbatical leave from Texas State University. The fourth author was supported by the Babe\c s-Bolyai University through grant number SRG-UBB 32910. Also, some part of this work  was done while the first author visited the second author as a Research Fellow, supported  by the Scientific and Technological Research Council of T\"urkiye, at Texas State University. She  would like to thank the Department of Mathematics at Texas State University for its hospitality, and T\"UB\.ITAK for granting her the research fellowship.

\end{document}